\documentclass[11pt,letterpaper,reqno,oneside]{amsart}
\usepackage[letterpaper, includehead, includefoot,top=30mm, bottom=30mm, left=30mm, right=30mm,marginparwidth=25mm, marginparsep=4mm]{geometry}
\usepackage{comment}
\usepackage{times}
\usepackage{amsfonts} %
\usepackage{amsmath} %
\usepackage{amssymb} %
\usepackage{enumerate} %
\usepackage{amsthm} %
\usepackage[all]{xy}
\usepackage{bbm} %
\usepackage{graphicx} %
\usepackage{nicefrac} %
\usepackage{color} %
\usepackage{hyperref}
\usepackage{ulem}
\usepackage{thmtools}
\usepackage{enumerate}
\usepackage{booktabs}
\usepackage{caption}

\usepackage[all]{xy}

\usepackage{color}
\usepackage{hyperref}
\hypersetup{
    colorlinks=true,
    linkcolor=blue,
    citecolor=blue,
    filecolor=blue,
    urlcolor=blue
}
\usepackage[nameinlink,capitalize,noabbrev]{cleveref}

\usepackage[nameinlink,capitalize,noabbrev]{cleveref}

\usepackage{calrsfs}

\usepackage{tikz}
\usetikzlibrary{calc,intersections}
\usepackage{pgfplots}
\usepgfplotslibrary{patchplots}
\usetikzlibrary{patterns, positioning, arrows}
\pgfplotsset{compat=1.15}

\usepackage{todonotes}

\newtheorem{theorem}{Theorem}[section] %
\newtheorem{corollary}[theorem]{Corollary} %
\newtheorem{lemma}[theorem]{Lemma} %
\newtheorem{proposition}[theorem]{Proposition} %
{\theoremstyle{remark} %
  \newtheorem{remark}[theorem]{Remark}} %
{\theoremstyle{definition} %
  \newtheorem{definition}[theorem]{Definition} %
  \newtheorem{example}[theorem]{Example} %
}

\newcommand{\PP}[0]{\ensuremath{\mathbb{P}}}
\newcommand{\CC}[0]{\ensuremath{\mathbb{C}}}
\newcommand{\ZZ}[0]{\ensuremath{\mathbb{Z}}}

\newcommand{\Stab}[0]{\ensuremath{\operatorname{Stab}}}
\newcommand{\codim}[0]{\ensuremath{\operatorname{codim}}}

\newcommand{\lcm}[0]{\ensuremath{\operatorname{lcm}}}

\newcommand{\Fix}[0]{\ensuremath{\operatorname{Fix}}}
\newcommand{\Aut}[0]{\ensuremath{\operatorname{Aut}}}

\newcommand{\diag}[0]{\ensuremath{\operatorname{diag}}}

\newcommand{\PGL}[0]{\ensuremath{\operatorname{PGL}}}

\newcommand{\Id}[0]{\ensuremath{\operatorname{Id}}}

\newcommand{\age}[0]{\ensuremath{\operatorname{age}}}

\newcommand{\AF}[0]{\ensuremath{\mathbb{A}}}
\newcommand{\NN}[0]{\ensuremath{\mathbb{N}}}

\newcommand{\OO}[0]{\ensuremath{\mathcal{O}}}

\newcommand{\tX}[0]{{\ensuremath{\widetilde{X}}}}

\newcommand{\IP}{\mathbb{P}}
\newcommand{\IZ}{\mathbb{Z}}

\DeclareMathOperator{\Spec}{Spec}

\begin{document}

\title{Quotients of Calabi--Yau Generalized Fermat manifolds and Log~Enriques varieties}

\author[S. Billi]{Simone Billi}
\address{Fakult\"at f\"ur Mathematik und Informatik, Universit\"at des Saarlandes, Campus E2.4,
66123 Saarbr\"ucken, Germany} %
\email{billi@math.uni-sb.de}

\author[A. J. Palomino]{Ana Julisa Palomino}
\address{Instituto de Matem\'aticas, Universidad de Talca,
Casilla 721, Talca, Chile} %
\email{ana.palomino@utalca.cl}

\author[A. Sarti]{Alessandra Sarti}
\address{Laboratoire de Math\'ematiques et applications, Universit\'e de Poitiers, UMR 7348 CNRS, 
Poitiers, France} %
\email{alessandra.sarti@univ-poitiers.fr}

\date{\today}


\begin{abstract}
Enriques manifolds and Log Enriques varieties have attracted a lot of interest in the last years. In this paper we study Enriques manifolds and Log Enriques varieties that are respectively \'etale and quasi-\'etale quotients of Calabi--Yau manifolds. We produce several examples by studying quotients of generalized Fermat manifolds of Calabi--Yau type. These are complete intersections of Fermat type manifolds all of the same degree and were described recently by Hidalgo, Hughes and Leyton--Álvarez. We describe all their quotients by a fixed-point-free automorphism. Most of the quotients are Log Enriques varieties or Calabi--Yau varieties with terminal singularities. In one case we get a family of Enriques manifolds which is a subfamily of a family described previously by Boissi\`ere, Camere and the third author, which is one of the two so far known families of Enriques manifolds of Calabi--Yau type. This family is obtained as an \'etale quotient of order two of a Calabi--Yau manifold complete intersection of quadrics, the other family is the Hilbert scheme of $n$ points on an Enriques surface. In the last part of the paper we show that these two families are different by computing their second Betti numbers, which are equal to one and eleven respectively.
\end{abstract}

\maketitle

\section{Introduction}
Recall that an Enriques surface is a compact complex surface with non-trivial but $2$-torsion canonical divisor and zero irregularity. It is a classical result that Enriques surfaces can be all obtained as \'etale quotients of K3 surfaces. More precisely, let $S$ be a K3 surface with a fixed-point-free involution $\iota$ then the quotient $S/\langle \iota \rangle$ is an Enriques surface. It turns out, by using Lefschetz formula, that only involutions can act without fixed points on K3 surfaces and the action is non-symplectic, i.e. $\iota^*\omega_S=-\omega_S$ where $\omega_S$ denotes the unique (up to scalar) never zero global holomorphic $2$-form on the K3 surface. Moreover, by \cite{Nikulin}, a K3 surface with a non-symplectic involution is projective so that all Enriques surfaces are projective. 

The notion of Enriques surface was generalized in higher dimension independently by \cite{OS} and \cite{BNWS}, we will follow the more recent and more general definition of \cite{LogEnriques}: an {\it Enriques manifold} is a complex projective manifold $E$ with torsion but non-trivial canonical divisor and zero irregularity. The smallest positive integer $I$ such that 
$I K_E\simeq 0$ is called the {\it index} of the Enriques manifold and observe that by definition $I>1$. This definition naturally leads us to consider quotients of irreducible holomorphic manifolds but also Calabi--Yau manifolds, the latter motivated by the higher-dimensional generalization of the classical construction of Enriques surfaces. Namely, one starts with complete intersection K3 surfaces in $\mathbb{P}^5$ equipped with a fixed-point-free involution \cite[6.1.5]{LogEnriques},
then the quotient is an Enriques surface and we know moreover that each Enriques surface can be obtained in this way.

By \cite{OS}, \cite{BNWS}, and \cite{LogEnriques}, there exist Enriques manifolds of index $2,3,4$ and no more examples are known at the time being. The most common techniques to produce examples are by considering quotients of irreducible holomorphic symplectic manifolds or Calabi--Yau manifolds by fixed-point-free automorphisms. Observe that a computation with the Lefschetz formula (see \cref{Enriques_manifold_index2} and also \cite{BNWS}) shows that Calabi--Yau manifolds can produce Enriques manifolds only of index $2$, and that the dimension of the Calabi--Yau manifold can only be even.  

A general motivation for the study of Enriques manifolds and their singular counterparts, the Log~Enriques varieties, comes from the Beauville--Bogomolov decomposition theorem for compact complex K\"ahler manifolds with vanishing first Chern class. It asserts that, up to a finite \'etale cover, such a manifold decomposes as a product of irreducible holomorphic symplectic (IHS) manifolds, simply connected Calabi--Yau (CY) manifolds, and complex tori (see \cite{BK}). This was then generalized to the singular setting by H\"oring-Peternell see \cite{Hoering_Peternell}.

The aim of this paper is to study a special family of manifolds called {\it Generalized Fermat manifolds (GFM)} defined in \cref{GFM} and introduced recently by Hidalgo, Hughes and Leyton--Álvarez in \cite{HHAL}, in relation to the following question. Let $M$ be a $d$-dimensional smooth projective variety with an action of a finite group $H:=(\IZ/k\IZ)^n$ for some non zero positive integers $d,n\geq 1$ and $k\geq 2$. Assume that the orbifold $M/H$ is the projective space $\IP^d$, and that the cover map $M\longrightarrow \IP^d$ with covering group $H$ has branch locus $n+1$ hyperplanes in $\IP^d$ in general position, $n\geq d+1$. If $H'$ is another group with the same property as $H$, then they show that $H'$ is equal to $H$ (up to some few exceptions) and they show that the manifold $M$ can be described in $\IP^n$ by a set of $(n-d)$ Fermat-type equations all of degree $k$ (see \cref{GFM}). In this situation we say that $M$ is {\it of type $(d;k,n)$}. Under certain conditions, these manifolds are complete intersection Calabi--Yau manifolds and by construction have many automorphisms. These are good candidates to investigate Enriques manifolds and Log Enriques varieties. The main result of the paper give a complete description of the quotients under certain assumptions and is formulated as follows:

\begin{theorem}[{\cref{main}}]\label{main0}
  Let $M_n^k$ be a Calabi--Yau GFM of type $(d;k,n)$. Assume that $\varphi\in H$ is an automorphism of $M_n^k$ with empty fixed locus. Then $\varphi$ has order $k$ and we have: 
    \begin{itemize}
        \item [$(i)$] If $d=2$ and $k=4$, then $n=3$ and $M_3^4/\langle\varphi\rangle$ is a Log Enriques surface of index two with canonical singularities, whose minimal resolution is an Enriques surface.
        \item [$(ii)$] If $d$ is even and $k=2$, then $M_n^2/\langle\varphi\rangle$ is an Enriques manifold of index two, that is a special member in the family described in \cref{Enriques_manifold}.
        \item [$(iii)$] If $d\geq 4$ is even and $k\neq 2$, then $M_n^k/\langle\varphi\rangle$ is a Log Enriques variety of index two with terminal singularities.
        \item [$(iv)$] If $d$ is odd and $k$ is a prime number, then $M_n^k/\langle\varphi\rangle$ is a Calabi--Yau manifold.
        \item [$(v)$] If $d$ is odd and $k$ is not a prime number, then  $M_n^k/\langle\varphi\rangle$ is a Calabi--Yau variety with terminal singularities.
    \end{itemize}
\end{theorem}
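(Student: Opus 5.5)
We use the explicit description of \cref{GFM}: $M_n^k\subset\IP^n$ is cut out by $n-d$ generic Fermat equations $F_j=\sum_i\lambda_{j,i}x_i^k$. The Calabi--Yau condition is $(n-d)k=n+1$, equivalently $d=(k-1)(n-d)-1$. The group $H$ acts diagonally, so we write $\varphi=\diag(\zeta^{a_0},\dots,\zeta^{a_n})$ with $\zeta=e^{2\pi i/k}$. The whole proof reduces to bookkeeping on the eigenvalue classes $I_r=\{i : a_i\equiv r\}$.

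\emph{Step 1: fixed points and the order.} The fixed locus of $\varphi$ in $\IP^n$ is the union of the linear spaces $\{x_j=0,\ j\notin I_r\}$. Restricted to such a space, the $F_j$ are $n-d$ generic Fermat forms in $|I_r|$ variables. They therefore have a common zero in $\IP^{|I_r|-1}$ if and only if $|I_r|\ge n-d+1$. Hence $\varphi$ is fixed-point-free exactly when every class has at most $n-d$ elements. If $\varphi$ has order $m\mid k$, there are at most $m$ classes, so $n+1\le m(n-d)$. Combined with $n+1=k(n-d)$, this forces $m=k$. Moreover every residue $r\in\IZ/k$ then occurs exactly $n-d$ times. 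After permuting coordinates, $\varphi$ is therefore uniquely determined.

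\emph{Step 2: action on the volume form and on powers.} The $F_j$ are $H$-invariant, so the generator $\omega$ of $H^0(K_M)$, obtained as a Poincaré residue, transforms by $\det\varphi$. This gives
\[
\zeta^{\sum a_i}=\zeta^{(n-d)k(k-1)/2}=(-1)^{(n-d)(k-1)}=(-1)^{d+1}.
\]
Consequently, for $d$ odd the quotient has trivial canonical class and $h^{p,0}$ behaves as for a Calabi--Yau variety. For $d$ even, $K$ of the quotient is $2$-torsion and nontrivial, so the index is two.

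For $1\le j<k$, set $g=\gcd(j,k)$. The classes of $\varphi^j$ are unions of $g$ of the $I_r$, hence have size $g(n-d)$. The fixed locus of $\varphi^j$ is thus nonempty precisely when $g>1$, and it then has dimension $(g-1)(n-d)-1$. It follows that the action is free if and only if $k$ is prime. In that case the quotient is smooth, which gives (iv) for $d$ odd and (ii) for $k=2$. For (ii) the quotient is a smooth complete intersection of quadrics modulo a free involution with $\varphi^*\omega=-\omega$, and we identify it with a member of the family of \cref{Enriques_manifold}.

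\emph{Step 3: singularities.} When $k$ is not prime we apply the Reid--Tai criterion at a fixed point $p$ of $\varphi^j$. The tangent space $T_pM$ is the normal space in $\IP^n$ minus the normal directions of the $F_j$. We therefore compute the eigenvalues of $\varphi^j$ on $T_pM$ from the weights $a_i-a_{i_0}$, removing $n-d$ copies of the weight $k(\cdot)$ contributed by the equations. We then compute the age of each element of the stabilizer.

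In case (i), $n=3$ and only $\varphi^2$ has fixed points: there are finitely many, with weights $(\tfrac12,\tfrac12)$ and age $1$. These are $A_1$ points, which are canonical. Their crepant resolution keeps $K$ $2$-torsion and the irregularity zero, so it is an Enriques surface.

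For (iii) and (v) we must show that every age is $>1$. For $d$ even, where $\varphi^j$ need not lie in $\mathrm{SL}$, we pass to the index-one cover and apply the criterion there. The resulting singularities are then terminal, in particular klt, which yields the Log Enriques statement in (iii) and the terminal Calabi--Yau statement in (v).

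The main obstacle is this age estimate. It requires a uniform lower bound on the age, using that the $g(n-d)$ coordinates of a fixed class contribute $(g-1)(n-d)$ tangent directions with each nontrivial weight repeated $n-d$ times. When $n-d\ge 2$ this repetition makes the age comfortably exceed $1$; the case $n-d=1$ needs a separate check and is precisely case (i).
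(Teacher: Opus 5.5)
Steps 1 and 2 are sound. Your Poincar\'e-residue computation $\det\varphi=\zeta^{tk(k-1)/2}=(-1)^{d+1}$ (with $t=n-d$) is a valid direct substitute for the paper's holomorphic Lefschetz argument in \cref{Lefschetz_number}, and your fixed-locus count for $\varphi^j$ matches \cref{fixed_points_automorphism}. You should, however, state explicitly that $\codim\Fix(\varphi^j)=t(k-g)\geq k/2\geq 2$ for composite $k$. Quasi-\'etaleness is what lets you pass from the action on $\omega_M$ to $K$ of the quotient, and it also makes the stabilizers free of quasi-reflections.

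The genuine gap is in Step 3: your case split is wrong, because $n-d=1$ is not ``precisely case (i)''. With $t=1$ the Calabi--Yau condition gives $n=k-1$ and $d=k-2$, so $M$ is the Fermat hypersurface of degree $k$ in $\IP^{k-1}$ for \emph{every} $k$. All instances with $k\in\{6,8,9,10,\dots\}$ therefore belong to (iii) or (v) and are left unproven. The count you propose (each nontrivial weight repeated $n-d$ times) does not handle them either. For $k=6$, $t=1$, the element $\varphi^3$ fixes two plane sextics and acts as $-1$ on the $3$-dimensional normal space. Its true age is $3/2$, but a multiplicity-one count only gives $\geq 1/2$. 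Even for $t=2$, an element of order $2$ gets only age $\geq 1$ from your count, not $>1$.

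The missing observation is the correct multiplicity. Take $p\in\Fix(\varphi^j)$, with $g=\gcd(j,k)$ and $o=k/g$. The other $o-1$ eigenclasses of $\varphi^j$ each contain $gt$ coordinates. The $t$ equations only cut down the trivial eigenspace, since their differentials at $p$ involve only the coordinates of the class of $p$. So every nonzero residue mod $o$ occurs with multiplicity $gt$ on $T_pM$. This multiset is symmetric under $w\mapsto o-w$, hence $\age(\varphi^j)=\tfrac12\codim\Fix(\varphi^j)=\tfrac{t(k-g)}{2}$. This is $>1$ unless $(t,k,g)=(1,4,2)$, which is exactly case (i). It is the content of the paper's bound $\alpha\geq (d-v_s)/o(\varphi^s)=ts(k-s)/k$ in \cref{terminal_singularities}.

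Finally, drop the passage to the index-one cover. The Reid--Tai criterion (\cref{age}) applies to any finite subgroup of $\mathrm{GL}$ without quasi-reflections, not only to subgroups of $\mathrm{SL}$. Checking terminality only on the index-one cover would not suffice, because terminality does not descend along quasi-\'etale covers. For example, the $\frac13(1,1)$ surface singularity has smooth index-one cover but is not even canonical. The elements acting with determinant $-1$, such as $\varphi^3$ above, are precisely the ones you must check directly.
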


As another main result of the paper, in \cref{cor:not_deformation} we show that the Enriques manifolds obtained as the Hilbert scheme of $n$ points on an Enriques surface and the example described in \cite[Theorem 3.1]{OS} are two different families. In fact, their Betti numbers are different and equal to $11$ and \(1\), respectively. We also compute the dimensions of their deformation spaces in \cref{thm:dimension_deformation_space}. As a final remark and future research direction, observe that the above theorem admits a natural generalization in which the assumption that the automorphism has an empty fixed locus is removed, this is the subject of further studies. More broadly, the construction of Enriques manifolds may be approached by considering quotients of products of tori, IHS and Calabi--Yau manifolds. Also this direction will be the subject of future investigation.

We finish the introduction by recalling some recent results on Enriques manifolds and Log Enriques varieties. 
In \cite{BGGG} the authors consider $OG10$--manifolds and, by using properties of the LLV--algebra, they show that it is not possible to obtain examples of Enriques manifolds that are quotients of $OG10$--manifolds (and their deformations). 
The generalization of the notion of Enriques surface to the singular setting was introduced by Zhang in \cite{log-Enriques_surfaces},
who called them Log Enriques surfaces, see \cref{defi_zhang} for a discussion on the definition. Then the generalization to the singular setting in higher dimension was introduced independently in \cite{LogEnriques} and \cite{DROTX}. The definitions are slightly different, and in \cite{DTX} the authors study the deformations of what they call {\it primitive Enriques varieties}. In the papers \cite{PS} and \cite{Gachet} the Morrison--Kawamata cone conjecture was proved for Enriques manifolds with universal cover an IHS manifold. In the papers \cite{raman} and \cite{billi} the authors respectively prove the global Torelli theorem and investigate the Nielsen realization problem for Enriques manifolds of the same type as before. Finally, the very recent paper \cite{FRTV} studies the Brauer group of some known families of Enriques manifolds. 

\bigskip

{\bf Outline:}
The paper is organized as follows: in \cref{prelim} we recall some facts on Enriques manifolds, CY manifolds, Log Enriques varieties and CY varieties. We give  also several examples. \cref{GFM} is devoted to introduce GFM's and in particular Calabi--Yau GFM's. We recall in this section several results of \cite{HHAL}. In \cref{auto_empty} we study automorphisms with empty fixed locus and in \cref{sing} we investigate the singularities of the quotients. One main ingredient here is the use of the Reid--Tai criterion, see \cite{Kollar_Sing}. In these sections we complete the proof of our main \cref{main0}. Finally, in \cref{families_enriques} we prove that the families constructed in \cite[6.1.5]{LogEnriques} and \cite[Theorem 3.1]{OS} are distinct and determine the dimensions of their respective deformation spaces.

\bigskip

{\bf Acknowledgements:} First we warmly thank Ruben Hidalgo for pointing out his paper during a visit of the last author at the University of Temuco in Fall 2023 and Michela Artebani, Samuel Boissi\`ere, Enrico Fatighenti, Lie Fu, Alice Garbagnati, Alvaro Liendo, Lucas Veyssi\`ere, Zhixin Xie  for very helpful discussions. This paper was finished when the  third author was visiting ICERM in Providence during the trimester {\it Computations on K3 surfaces and related varieties}. She warmly thanks ICERM for the great working conditions.

\section{Preliminaries}\label{prelim}
To make the paper more self contained we recall in this section some well known basic facts and we follow closely \cite{LogEnriques}. All the varieties we consider will be normal projective complex varieties.

Let $X$ be a normal projective complex variety, we denote the sheaf of Kähler differentials by $\Omega^{1}_{X}$, by $X_{\text{reg}}$ the regular locus of $X$ with its open embedding $\iota:X_{\text{reg}}\hookrightarrow X$, and by $\Omega_{X_{\text{reg}}}^j:=\bigwedge^{j}\Omega_{X_{\text{reg}}}^1$ the sheaf of $j$-forms.
For $j\in\NN $ such that $0\leq j\leq \dim X$, we define
\begin{align*}
\Omega_{X}^{[j]}:=\iota_*\Omega_{X_{\text{reg}}}^j.
\end{align*}
The sections in $\Omega_{X}^{[j]}$ are called \textit{reflexive forms on $X$}, see \cite{Canonical_3_folds,Stable_reflexive_sheaves,Dif-forms_logcanonical}. 

We denote in particular the \textit{canonical sheaf} by:
\begin{align*}
    \omega_{X}:=\Omega_{X}^{[\text{dim}X]},
\end{align*}
it is a divisorial sheaf whose associated linear equivalence class of Weil divisors is the \textit{canonical divisor} $K_X$ of $X$. By \cite[Corollary $8$]{Minimal_Reid}, the divisorial sheaf associated to a canonical divisor is the canonical sheaf i.e.
\begin{align*}
    \omega_{X}=\mathcal{O}_X(K_X),
\end{align*}
and we denote $\omega_{X}^{[j]}=\mathcal{O}_{X}(j K_X)$, for all $ j\geq 0$. 

Recall that a normal variety $X$ has \textit{rational singularities} if for any resolution of singularities $f:Z\to X$, one has $R^{>0}f_*\mathcal{O}_Z=0$. By \cite[Corollary $1.8$]{Extending_holomorphic_forms}, if $X$ is a normal variety with rational singularities then the pullback over $X_{\text{reg}}$ of any reflexive form extends regularly on any resolution of singularities.

A \textit{finite covering map} is a finite and surjective morphism $f:\tX \to X$ between normal projective varieties of the same dimension, we say that the finite covering map $f$ is \textit{quasi-\'etale} if $f$ is \'etale in codimension one. This means that there exists a closed subset $Z\subset \tX$ of codimension at least two such that the restriction $f|_Z:\tX\setminus Z\to X$ is \'etale, see \cite{etale}.

\begin{definition}
Let $X$ be a normal variety. Then we say that $X$ is a \textit{Calabi--Yau variety} (CY) if $\omega_X \cong \mathcal{O}_X$,  $X$ has at most rational singularities, and 
\begin{align*}
    H^0\left(X,\Omega_{X}^{[j]}\right)=0,\quad \text{for all } 0<j<\dim X.
\end{align*}
The unique (up to scalar) generator $v_X$ of $H^0\left(X,\Omega_{X}^{[\dim X]}\right)$ will be called (holomorphic) {\it volume form}. 
\end{definition}

\begin{remark}
  \begin{itemize}
      \item [$(i)$] 
      In some references, e.g.\ \cite[Definition~1.4]{Hoering_Peternell}, 
      canonical singularities are required instead of rational singularities. In our setting these two conditions are equivalent. Indeed, for a variety $X$ with $\omega_X\cong\OO_X$ the canonical divisor is trivial and in particular Cartier, and for a normal variety with Cartier canonical divisor one has that $X$ has rational singularities if and only if $X$ has canonical singularities, see \cite[Corollary~$5.24$]{kollar-mori}. See also \cite[Section~$2$]{LogEnriques}. 
      \item [$(ii)$] One should be careful here since to be coherent with the results of our paper we do not use the same definition of CY as in \cite{LogEnriques}, which is more restrictive than ours. 
  \end{itemize}                
\end{remark}

We recall the following proposition that we will use several time in the sequel:
\begin{proposition}[{\cite[Proposition~$5.20$]{kollar-mori}}]\label{klt}
Let $g \colon X' \to X$ be a finite morphism between $n$-dimensional 
normal varieties. Then $X'$ is klt if and only if $X$ is klt.
\end{proposition}
\begin{remark}
Observe that in particular \cref{klt} holds when $X'$ is smooth.  
\end{remark}

\begin{definition}Let $X$ be a variety (not necessarily smooth) and $\varphi\in\Aut(X)$, then we say that {\it $\varphi$ has empty fixed locus} if $\Fix_{\langle\varphi\rangle}(X):=\left\{x\in X\mid\varphi^t(x)=x,\,\,\mbox{for each}\,\, t\in\mathbb{Z}\right\}$ is empty. \end{definition}\begin{remark}Observe that this means that the automorphism itself has no fixed points but powers of $\varphi$ might have fixed points, and this is what will happen in many situations in this paper. \end{remark}

\begin{lemma}\label{Lefschetz_number}
    Let $Y$ be a CY manifold and let $v_Y$ be its volume form. Assume that $\varphi\in \Aut(Y)$ is an automorphism of finite order $k$ with empty fixed locus. Then $\varphi^*v_Y=(-1)^{\dim Y+1}v_Y$.
\end{lemma}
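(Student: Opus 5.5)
We apply the holomorphic Lefschetz fixed point formula to $\varphi$, which has finite order $k$, acting on the compact Kähler manifold $Y$. Set $n=\dim Y$. The formula reads
\[
\sum_{q=0}^{n}(-1)^q\operatorname{tr}\bigl(\varphi^*\mid H^{0,q}(Y)\bigr)=\sum_{x\in\Fix(\varphi)}\frac{1}{\det\bigl(\Id-d\varphi_x\bigr)}.
\]
The fixed locus of $\varphi$ is empty by hypothesis, so the right-hand side vanishes. Empty fixed locus in the sense of the definition means that no point is fixed by all powers of $\varphi$. In particular no point is fixed by $\varphi$ itself, since a point fixed by $\varphi$ is fixed by every power.

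Next we compute the left-hand side using the CY hypothesis. We have $H^{0,q}(Y)\cong \overline{H^{q,0}(Y)}=\overline{H^0(Y,\Omega^q_Y)}$, and this is $0$ for $0<q<n$.

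For $q=0$ the constants are fixed, so the trace is $1$.

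For $q=n$ we have $H^{0,n}(Y)\cong\overline{H^0(Y,\Omega^n_Y)}=\CC\,\overline{v_Y}$. Write $\varphi^*v_Y=\lambda v_Y$, where $\lambda$ is a $k$-th root of unity. To identify the trace on $H^{0,n}$ with $\bar\lambda$, it is cleanest to use Serre duality: $H^{0,n}(Y)=H^n(Y,\OO_Y)$ is dual to $H^0(Y,\omega_Y)$, and the pairing is $\varphi$-equivariant. This gives trace $\lambda^{-1}=\bar\lambda$. Alternatively, one can use that complex conjugation commutes with pullback.

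The formula therefore gives $1+(-1)^n\bar\lambda=0$, so $\bar\lambda=(-1)^{n+1}$. Since this value is real, $\lambda=(-1)^{n+1}$, i.e.\ $\varphi^*v_Y=(-1)^{\dim Y+1}v_Y$.

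The only delicate point is the identification of the action on $H^{0,n}$ with the conjugate (or inverse) of the action on the volume form. Because the resulting value is $\pm1$, the choice between conjugate and inverse does not affect the conclusion. One must also check that the hypotheses of the holomorphic Lefschetz formula are met: it only needs a holomorphic self-map of a compact complex manifold with isolated nondegenerate fixed points, and here there are none.
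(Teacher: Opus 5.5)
Your proposal is correct and follows the paper's proof essentially verbatim: the holomorphic Lefschetz formula with empty fixed set, the CY vanishing of $H^{0,q}(Y)$ for $0<q<\dim Y$, and the trace $\bar\lambda$ on $H^{0,\dim Y}(Y)\cong\overline{H^0(Y,\Omega^{\dim Y}_Y)}$ together give $1+(-1)^{\dim Y}\bar\lambda=0$. Your Serre-duality justification of that trace is a helpful detail the paper leaves implicit, but it does not change the route.
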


\begin{proof}
     Since $\Fix_\varphi(Y)=\emptyset$, we obtain that the Lefschetz number $L(\varphi)$ is zero and so by the holomorphic Lefschetz fixed-point formula we have
\begin{align*}
    0=L(\varphi)=\sum_{j=0}^{\dim Y}(-1)^{j}Tr\,{\varphi^{*}}(\overline{H^0(Y,\Omega_Y^{j})}).
\end{align*}
Since $Y$ is a CY manifold, we have that $H^0(Y,\Omega_Y^{j})=0$ for all $0<j<\dim Y$. Then
\begin{align*}
    0&=Tr\,{\varphi^{*}}(\overline{H^0(Y,\Omega_Y^{0})})+(-1)^{\dim Y}\, Tr\,{\varphi^{*}}(\overline{\underbrace{H^0(Y,\Omega_Y^{\dim Y })}_{{=\, \CC \cdot \bar v_Y}}})\\
    0&=1+(-1)^{\dim Y}\,\bar \varepsilon, \quad \text{where } \varphi^*v_Y=\varepsilon\cdot v_Y.
    \end{align*}
    This finally gives
$$   
\varepsilon=\bar\varepsilon=(-1)^{\dim Y+1},
$$ 
so that $\varphi^*v_Y=(-1)^{\dim Y+1}v_Y$.
\end{proof}

\begin{definition}\label{log_enriques}
\begin{itemize}
    \item[$(i)$] A $K$-\textit{torsion variety} is a normal projective variety $X$ with klt singularities and torsion (but not trivial) canonical divisor $K_X$. The \textit{index} of $X$ is the smallest positive integer $I$ such that $IK_X\sim 0$ (observe that by definition $I>1$).
    \item[$(ii)$] A \textit{logarithmic Enriques variety} (\textit{Log Enriques} for short) is a $K$-torsion variety $X$ such that the irregularity $q(X):=h^0(\Omega^{[1]}_{X})=0$. 
    \item[$(iii)$]A Log Enriques variety is of \textit{CY type} if it admits a cyclic quasi-\'etale cover by a CY variety.
    \item [$(iv)$]A smooth Log Enriques variety is an \textit{Enriques manifold}.
\end{itemize}
\end{definition}
\begin{remark}\label{rational_klt}
\begin{itemize}
    \item[$(i)$] Observe that, by \cite[Theorem 5.22]{kollar-mori}, klt singularities are rational.
    \item[$(ii)$] Observe that when $X$ is an Enriques manifold of CY type then the cyclic quasi-\'etale cover is in fact \'etale, this follows from the Zariski--Nagata purity theorem \cite{Nagata, Zariski} and it is a more general fact, not depending from the CY type.
 \end{itemize}
\end{remark}
\begin{remark}\label{defi_zhang}
        We point out that our definition of Log Enriques surface is slightly different from the definition given by Zhang in \cite{log-Enriques_surfaces} since there the author assumes that the Log~Enriques surface has no ADE singularities and he allows the canonical divisor to be trivial. 
    \end{remark}
\begin{proposition}\label{log_enriques_CY_type}
    Let $Y$ be a CY manifold and let $v_Y$ be its volume form. Assume that $\varphi\in \Aut(Y)$ is an automorphism of finite order $k$ with empty fixed locus.  Let $X:=Y/\langle\varphi\rangle$ and assume that $\pi:Y\to X$ is a quasi-étale cover, then the following hold:
    \begin{itemize}
    \item[$(i)$] $X$ has at worst klt singularities (which are rational). 
        \item [$(ii)$] If $\dim Y$ is an even number, then $\varphi$ has even order and  $X$ is a Log Enriques variety of CY type and index 2.
        \item [$(iii)$] If $\dim Y$ is an odd number, then $X$ is a CY variety.
    \end{itemize}
\end{proposition}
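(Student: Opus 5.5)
Part $(i)$ follows directly from \cref{klt}: $\pi\colon Y\to X$ is finite between normal varieties of the same dimension and $Y$ is smooth, hence klt, so $X$ is klt. Rationality of the singularities then comes from \cref{rational_klt}$(i)$.

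The rest of the argument rests on relating $K_X$ to the action of $\varphi$ on $v_Y$. Since $\pi$ is quasi-\'etale, there is a closed subset $Z\subset X$ of codimension at least two such that $\pi$ is \'etale over $U:=X\setminus Z$. I would also shrink $U$ so that it lies in $X_{\mathrm{reg}}$; this is harmless because $X$ is normal. Over $U$ we have $\pi^*\omega_U=\omega_{\pi^{-1}U}$. Reflexive forms on $X$ are determined on $U$, so $H^0(X,\Omega^{[j]}_X)$ is identified with the $\varphi$-invariant part of $H^0(\pi^{-1}U,\Omega^j)$. Because $Y$ is smooth and $\codim(Y\setminus\pi^{-1}U)\geq 2$, Hartogs gives $H^0(\pi^{-1}U,\Omega^j)=H^0(Y,\Omega^j_Y)$. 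Hence
\[
H^0(X,\Omega^{[j]}_X)\cong H^0(Y,\Omega^j_Y)^{\langle\varphi\rangle}.
\]
The same reasoning applied to pluricanonical reflexive sheaves gives
\[
H^0(X,\omega_X^{[m]})\cong H^0\bigl(Y,\omega_Y^{\otimes m}\bigr)^{\langle\varphi\rangle}=\bigl(\CC\, v_Y^{\otimes m}\bigr)^{\langle\varphi\rangle},
\]
because $\omega_Y\cong\OO_Y$. For $0<j<\dim X$ the group $H^0(Y,\Omega^j_Y)$ vanishes, so $X$ has no reflexive $j$-forms in that range; in particular $q(X)=0$.

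By \cref{Lefschetz_number}, $\varphi^*v_Y=(-1)^{\dim Y+1}v_Y$. I then split according to the parity of $\dim Y$.

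If $\dim Y$ is odd, then $v_Y$ is invariant, so it descends to a nowhere vanishing section of $\omega_X$ on $U$. This section extends to a generator of the reflexive sheaf $\omega_X$, so $\omega_X\cong\OO_X$. Together with the rational singularities from $(i)$ and the vanishing of reflexive forms above, $X$ is CY. This proves $(iii)$.

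If $\dim Y$ is even, then $\varphi^*v_Y=-v_Y$. Since $\varphi$ has order $k$, we get $(-1)^k=1$, so $k$ is even. Now $v_Y^{\otimes 2}$ is invariant, so $\omega_X^{[2]}\cong\OO_X$ by the same descent argument. On the other hand $\omega_X\not\cong\OO_X$: a trivializing section would pull back to an invariant multiple of $v_Y$, and no nonzero one exists. Thus $K_X$ is torsion of index exactly $2$. Combined with klt singularities and $q(X)=0$, $X$ is Log Enriques. It is of CY type because $\pi$ is a cyclic quasi-\'etale cover by the CY manifold $Y$. This proves $(ii)$.

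The main technical point is the identification of reflexive (pluri)forms on $X$ with invariant forms on $Y$. It needs the quasi-\'etale hypothesis to extend across codimension two and normality of $X$ to extend sections from $U$, and I would carry out this step carefully.
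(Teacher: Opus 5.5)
Your proof is correct and follows the same route as the paper. You use \cref{Lefschetz_number} to get $\varphi^*v_Y=(-1)^{\dim Y+1}v_Y$, then identify reflexive (pluri)forms on $X$ with $\varphi$-invariant forms on $Y$ via the quasi-\'etale cover, which yields $q(X)=0$, $H^0(X,\omega_X)=0$ and $\omega_X^{[2]}\cong\OO_X$ in the even case, and $\omega_X\cong\OO_X$ in the odd case. Two details you make explicit are left implicit in the paper: the Hartogs extension across codimension two, and deducing (i) from \cref{klt} rather than from the paper's citation of Koll\'ar--Mori, Theorem 5.21.
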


\begin{proof}
The proof of $(i)$ is well known and it follows e.g. from \cite[Theorem $5.21$, $(1)$]{kollar-mori} and from our \cref{rational_klt}. 

    If we assume that $\dim Y$ is even, then by \cref{Lefschetz_number} we have that $\varphi^*v_Y=-v_Y$, so that $k$ must be an even number and observe that
    \begin{align*}
        \varphi^*v_Y^{\otimes 2}=v_Y^{\otimes 2}.
    \end{align*}
    We obtain that 
    \begin{align*}
        H^0(X,\omega_X)=&H^0(X_{\text{reg}},\omega_{X_{\text{reg}}})=\\
        =&H^0(Y,\omega_{Y})^{\langle\varphi\rangle}=\{0\},
    \end{align*}
    and, in a similar way:
    \begin{align*}
        H^0\left(X,\omega_{X}^{[2]}\right)=H^0\left(X_{\text{reg}},\omega_{X_{\text{reg}}}^{\otimes 2}\right)\cong H^0\left(Y,\omega_{Y}^{\otimes 2}\right)^{\langle\varphi\rangle}=\CC\cdot v_Y^{\otimes 2}
    \end{align*}
   and since $v_Y^{\otimes 2}$ is never zero (otherwise $v_Y$ would also vanish at some point and this is not the case), we obtain that $2K_X\thicksim0$. 
   Since $Y$ is a CY variety, we have that
    \begin{align*}
        H^0\left(Y,\Omega_Y^j\right)=0, \quad \text{for all }0<j< \dim Y,
    \end{align*} 
    and, as before, we have an identification
    \begin{align*}
        H^0\left(X,\Omega_X^{[j]}\right)=H^0\left(Y,\Omega_Y^j\right)^{\langle\varphi\rangle}.
    \end{align*}
    Hence, $H^0\left(X,\Omega_X^{[j]}\right)=0$ for all $0<j\leq \dim X$ and in particular $q(X)=0$. By \cref{log_enriques}, we obtain that $X$ is a Log Enriques variety of CY type and index 2. This shows $(ii)$.\\

    On the other hand, if we assume that $\dim Y$ is an odd number and proceed as above, we obtain that $\varphi^*v_Y=v_Y$ so that $K_X\thicksim 0$ 
    and $H^0\left(X,\Omega_X^{[j]}\right)=0$ for all $0<j< \dim X$. We then conclude that $X$ is a CY variety, proving $(iii)$.
\end{proof}

\begin{corollary}\label{Enriques_manifold_index2}
Let $X$ be an Enriques manifold of Calabi--Yau type, that is $X=Y/\langle\varphi\rangle$
where $Y$ is a CY manifold, $\varphi\in\Aut(Y)$ is an automorphism of finite order
$k$ with empty fixed locus, and $\pi\colon Y\to X$ is a quasi-étale cover. Then $\pi$ is
in fact \'etale, the order of $\varphi$ is $2$, and $X$ has index $2$.
\end{corollary}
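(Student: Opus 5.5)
Since $X$ is smooth, I will first show that $\pi$ is étale, and then use this to force every nontrivial power of $\varphi$ to act freely and to pin down the order of $\varphi$.

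\emph{Step 1: $\pi$ is étale.} By \cref{rational_klt}(ii), which rests on Zariski--Nagata purity, a quasi-étale finite cover of the smooth variety $X$ by the smooth variety $Y$ is étale. Hence $\pi\colon Y\to X=Y/\langle\varphi\rangle$ is étale. For a quotient by a finite group this means all stabilizers are trivial: no nontrivial power $\varphi^t$, $0<t<k$, has a fixed point.

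\emph{Step 2: the parity of $\dim Y$.} Each $\varphi^t$ with $0<t<k$ has finite order and empty fixed locus, so \cref{Lefschetz_number} gives $(\varphi^t)^*v_Y=(-1)^{\dim Y+1}v_Y$. If $\dim Y$ were odd, then $\varphi^*$ would act trivially on $v_Y$. By \cref{log_enriques_CY_type}(iii), $X$ would then be a CY variety with $K_X\sim 0$, contradicting that $X$ is Enriques with nontrivial $K_X$. So $\dim Y$ is even, and $(\varphi^t)^*v_Y=-v_Y$ for every $0<t<k$.

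\emph{Step 3: the order and index.} Suppose $k>2$. Then $t=1$ and $t=2$ both satisfy $0<t<k$, so by Step 2 both $\varphi$ and $\varphi^2$ act on $v_Y$ by $-1$. But $(\varphi^2)^*v_Y=(\varphi^*)^2v_Y=(-1)^2v_Y=v_Y$, a contradiction. Hence $k\le 2$. Since $K_X$ is nontrivial, $\varphi\neq\Id$, so $k=2$. Finally, by \cref{log_enriques_CY_type}(ii), $2K_X\sim 0$, and $K_X\not\sim 0$ by definition of an Enriques manifold, so the index is exactly $2$.

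The step that needs the most care is Step 1. Étaleness is exactly what lets us apply \cref{Lefschetz_number} to every power $\varphi^t$, not only to $\varphi$ itself. In general, as the paper remarks, powers of $\varphi$ can have fixed points, so this step uses the smoothness of $X$ essentially; without it the argument would only constrain $\varphi$ and not its powers.
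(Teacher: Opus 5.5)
Your proposal is correct and is essentially the paper's own argument. Purity makes $\pi$ étale, so every power $\varphi^t$ with $0<t<k$ acts freely. The holomorphic Lefschetz lemma together with part $(iii)$ of the quotient proposition then forces $\varphi^*v_Y=-v_Y$; your parity step is just a rephrasing of the paper's observation that $\varepsilon=-1$. For $k\geq 3$, the power $\varphi^2$ gives $1=-1$, and part $(ii)$ yields index $2$.
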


\begin{proof}
Since $X$ is smooth, $\pi$ is \'etale by \cref{rational_klt}~$(ii)$ and hence
$\Fix_{\varphi^s}(Y)=\emptyset$ for all $1\leq s\leq k-1$. Then, writing
$\varphi^*v_Y=\varepsilon\, v_Y$ this gives $\varepsilon^s=(-1)^{\dim Y+1}$ for all
$1\leq s\leq k-1$, by \cref{Lefschetz_number}. Note that $\varepsilon=-1$ when $s=1$, since otherwise
\cref{log_enriques_CY_type}~$(iii)$ would give that $X$ is a CY variety. Assume now that
$k\geq 3$. Then $s=2$ lies in the above range and yields $1=\varepsilon^2=-1$, which is a
contradiction. Therefore $k=2$ and $X$ has index $2$ by \cref{log_enriques_CY_type}~$(ii)$. See also \cite[Proposition $2.1$, $(2)$]{BNWS} for a proof using the holomorphic Euler characteristic.
\end{proof}

\begin{example}\label{examples_log-Enriques}
We discuss in detail some examples of Log Enriques varieties of CY type and index $2$.
    \begin{enumerate}
    \item[$1)$] Consider the smooth Fermat quartic surface in $\PP^3$
    \begin{align*}
        X_4:=\left\{ x_1^4+x_2^4+x_3^4+x_4^4=0\right\}.
    \end{align*}
    This is a K3 surface which admits the automorphism
    induced by 
    \begin{align*}
        \varphi: \CC^4\to \CC^4, \quad \varphi(x_1,x_2,x_3,x_4)=\left(x_1,ix_2,i^2x_3,i^3x_4\right)=\left(x_1,ix_2,-x_3,-ix_4\right),
    \end{align*}
    which, by abuse of notation, we call again $\varphi$.
    Observe that $\varphi$ has order $4$ and a direct computation shows that $\Fix_{\varphi}(X_4)$ is empty. 
    One can compute directly the action on the $2$-form. Indeed, let $f:=x_1^4+x_2^4+x_3^4+x_4^4$ then in the local coordinates we have
    \begin{align*}
        v_{X_4}=\frac{dx_3\wedge dx_4}{\partial f / \partial x_2}=\frac{dx_3\wedge dx_4}{4x_2^3},
    \end{align*}
    then 
    \begin{align*}
       \varphi^*v_{X_4}=\frac{i}{i^3}\cdot\frac{dx_3\wedge dx_4}{4x_2^3}=-\frac{dx_3\wedge dx_4}{4x_2^3}=-v_{X_4}.
    \end{align*}
    Now, note that $(\varphi^2)^*v_{X_4}=v_{X_4}$ and $\Fix_{\varphi^2}(X_4)$ is not empty: in fact
    \begin{align*}
        \varphi^2([x_1:x_2:x_3:x_4])=[x_1:-x_2:x_3:-x_4].
    \end{align*}
    It is well known that $Y_4:=X_4/\langle\varphi^2\rangle$ has eight $A_1$-singularities and its minimal resolution is a K3 surface (see \cite{Nikulin}). We denote the minimal resolutions of $Y_4$ and $Z_4:=X_4/\langle\varphi\rangle$ by
    \begin{align*}
        \widetilde{\pi}_1:\widetilde{Y}_4\to Y_4\quad \text{and}\quad \widetilde{\pi}_2:\widetilde{Z}_4\to Z_4,
    \end{align*}
    respectively, and observe that $\widetilde{Y}_4$ contains $8$ disjoint $(-2)$ exceptional curves.
    
    We have the following maps
    \begin{align*}
       \xymatrix@R=2em@C=6em{  
        X_4
       \ar[dr]^{2:1}
       \ar[dd]_{4:1}
        & &
        \ar[dl]_{\widetilde{\pi}_1}\ar[dd]^{\langle\varphi\rangle/\langle\varphi^2\rangle}\widetilde{Y}_4\\
       &\ar[dl]_{2:1}
       Y_4&
       \\
       Z_4& & \ar[ll]_{\widetilde{\pi}_2}\widetilde{Z}_4}
    \end{align*}
    where the involution $\langle \varphi \rangle/\langle \varphi ^2\rangle$ has no fixed points on $Y_4$ and it exchanges two by two the $A_1$ singularities. This involution lifts to a fixed-point-free involution on $\widetilde{Y}_4$ so that the minimal resolution $\widetilde{Z}_4$ of $Z_4$ is in fact an Enriques surface. Then $Z_4$ is a Log Enriques surface by \cref{log_enriques}. One can also conclude by using our \cref{log_enriques_CY_type}.

   \item[$2)$] Now, consider the smooth Fermat hypersurface of dimension $4$ and degree $6$ in $\PP^5$
   \begin{align*}
       X_6:=\left\{x_1^6+x_2^6+x_3^6+x_4^6+x_5^6+x_6^6=0\right\}.
   \end{align*}
   This is a Calabi--Yau manifold. Consider the automorphism
   \begin{align*}
       \varphi: \CC^6 \to \CC^6, \quad 
       \varphi(x_1,x_2,x_3,x_4,x_5,x_6)=\left(x_1,\xi_6\hspace{0.5mm}x_2,\xi_6^2\hspace{0.5mm}x_3,\xi_6^3\hspace{0.5mm}x_4,\xi_6^4\hspace{0.5mm}x_5,\xi_6^5\hspace{0.5mm}x_6\right),
   \end{align*}
   where $\xi_6$ is a primitive $6$-th root of unity. 
   Again, a direct computation as in the previous example shows that $\Fix_{\varphi}(X_6)$ is empty. We can compute now again directly the action on the $4$--form. Indeed, let $f:=x_1^6+x_2^6+x_3^6+x_4^6+x_5^6+x_6^6$ then in the local coordinates we have that
   \begin{align*}
       v_{X_6}=\frac{dx_3\wedge dx_4\wedge dx_5\wedge dx_6}{\partial f/\partial x_2}=\frac{dx_3\wedge dx_4\wedge dx_5\wedge dx_6}{6 x_2^5},
   \end{align*}
   then 
   \begin{align*}
       \varphi^*v_{X_6}=\frac{\xi^2_{6}\cdot\xi^3_{6}\cdot\xi^4_{6}\cdot \xi^5_{6}}{\xi_{6}^{5}}\cdot \frac{dx_3\wedge dx_4\wedge dx_5\wedge dx_6}{6 x_2^5}=\xi_{6}^3\cdot v_{X_6}=-v_{X_6},
   \end{align*}
   as we expected (see \cref{Lefschetz_number}). Now, note that a direct computation shows that
   \begin{align*}
       \Fix_{\varphi^2}(X_6)&=\left\{[x_1:0:0:x_4:0:0]\in\PP^{5} \mid x_1^6+x_4^6=0\right\}\\
       &\cup \left\{[0:x_2:0:0:x_5:0]\in\PP^{5}\mid   x_2^6+x_5^6=0\right\}\\ & \cup \left\{[0:0:x_3:0:0:x_6]\in\PP^{5}\mid   x_3^6+x_6^6=0\right\},
   \end{align*}
   which are $18$ isolated fixed points, and 
   \begin{align*}
       \Fix_{\varphi^3}(X_6)&=\left\{[x_1:0:x_3:0:x_5:0]\in\PP^{5}\mid  x_1^6+x_3^6+x_5^6=0\right\}\\ &\cup \left\{[0:x_2:0:x_4:0:x_6]\in\PP^{5}\mid   x_2^6+x_4^6+x_6^6=0\right\},
   \end{align*}
   are two disjoint plane sextics. Observe that these fixed loci do not intersect. 
 Thus, we have the following quasi-\'etale maps
   \begin{align*}
       \xymatrix@R=2em@C=6em{  
        X_6
       \ar@/^1.0pc/[r]^{\pi_1}
       \ar[r]_{3:1}
       \ar[dr]^{2:1}
       \ar@/_1.0pc/[dr]_{\widetilde{\pi}_1}
        & 
       \dfrac{X_6}{\langle\varphi^2\rangle}
       \ar[r]_{2:1}
       \ar@/^1.0pc/[r]^{\pi_2}
        &
       \dfrac{X_6}{\langle\varphi\rangle}\\
       & 
       \dfrac{X_6}{\langle\varphi^3\rangle}\ar[ur]^{3:1}\ar@/_1.0pc/[ur]_{\widetilde{\pi}_2}
       & }.
   \end{align*}
Since all the quotient maps are finite morphisms, \cref{klt} gives that 
all the quotient varieties have klt singularities, and hence rational singularities 
by \cref{rational_klt}. By \cref{log_enriques_CY_type}, we conclude that 
$X_6/\langle\varphi\rangle$ and $X_6/\langle\varphi^3\rangle$ are 
Log Enriques varieties of CY type and index $2$, and 
$X_6/\langle\varphi^2\rangle$ is a CY variety.
  \end{enumerate}
\end{example}

\begin{example}
We now discuss examples of CY manifolds and varieties.
\begin{itemize}
    \item [$1)$] Consider the smooth hypersurface of dimension $3$ and degree $5$ in $\PP^4$
   \begin{align*}
       X_5:=\left\{x_1^5+x_2^5+x_3^5+x_4^5+x_5^5=0\right\}.
   \end{align*}
   Also, consider the following automorphism
   \begin{align*}
      \varphi: \CC^5 \to \CC^5, \quad  \varphi(x_1,x_2,x_3,x_4,x_5)=\left(x_1,\xi_5\hspace{0.5mm}x_2,\xi_5^2\hspace{0.5mm}x_3,\xi_5^3\hspace{0.5mm}x_4,\xi_5^4\hspace{0.5mm}x_5\right),
   \end{align*}
   where $\xi_5$ is a primitive $5$-th root of unity and as before observe that $\Fix_{\varphi^s}(X_5)$ is empty, for all $1\leq s<5$.
By \cref{log_enriques_CY_type}, we obtain that $X_5/\langle\varphi\rangle$ is a Calabi--Yau manifold and since $X_5$ is simply connected we conclude that $X_5/\langle\varphi\rangle$ has fundamental group $\ZZ/5\ZZ$.

   \item [$2)$]Now, consider the smooth hypersurface of dimension $7$ and degree $9$ in $\PP^8$
   \begin{align*}
       X_9:=\left\{x_1^9+x_2^9+x_3^9+x_4^9+x_5^9+x_6^9+x_7^9+x_8^9+x_9^9=0\right\}.
   \end{align*}
   Also, consider the following diagonal automorphism
   \begin{align*}
       \varphi: \CC^9 \to \CC^9, \quad \varphi=\diag\left(1\ ,\xi_9\ ,\xi_9^2\ ,\xi_9^3\  ,\xi_9^4\ ,\xi_9^5\ ,\xi_9^6\ ,\xi_9^7\ ,\xi_9^8\ \right),
   \end{align*}
   where $\xi_9$ is a primitive $9$-th root of unity. Note that $\Fix_{\varphi}(X_9)$ is empty. Observe that $\Fix_{\varphi^3}(X_9)$ is not empty: in fact
   \begin{align*}
       \varphi^3=\diag\left(1\ ,\xi^3_9\ ,\xi_9^6\ ,1\ ,\xi_9^3\ ,\xi_9^6\ ,1\ ,\xi_9^3\ ,\xi_9^6\ \right),
   \end{align*}
   and we obtain that
   \begin{align*}
       \Fix_{\varphi^3}(X_9)&=\left\{[x_1:0:0:x_4:0:0:x_7:0:0]\in \PP^8 \mid  x_1^9+x_4^9+x_7^9=0\right\}\\
       &\cup \left\{[0:x_2:0:0:x_5:0:0:x_8:0]\in \PP^8\mid  x_2^9+x_5^9+x_8^9=0 \right\}\\
       &\cup \left\{[0:0:x_3:0:0:x_6:0:0:x_9]\in \PP^8 \mid  x_3^9+x_6^9+x_9^9=0 \right\}.
   \end{align*}
   These are three disjoint plane curves of degree 9. Observe that for $s=1,2,4,5,7,8$ the fixed locus of $\varphi^s$ is empty,
   and for $s=3,6$ the fixed loci are not empty and coincide.  
   Then we have the following maps
   \begin{align*}
       \xymatrix@R=2em@C=6em{  
        X_9
       \ar@/^1.0pc/[r]^{\pi_1}
       \ar[r]_{3:1}
        & 
       \dfrac{X_9}{\langle\varphi^3\rangle}
       \ar[r]_{3:1}
       \ar@/^1.0pc/[r]^{\pi_2}
        &
       \dfrac{X_9}{\langle\varphi\rangle}
       }.
   \end{align*}
   All quotient maps are finite, hence both $X_9/\langle\varphi\rangle$ and $X_9/\langle\varphi^3\rangle$ have klt singularities and in particular rational singularities (see \cref{klt} and \cref{rational_klt}). Then, by \cref{log_enriques_CY_type}, we conclude that both quotients are CY varieties.
\end{itemize}
    
\end{example}

\section{Generalized Fermat manifold of type Calabi--Yau}\label{GFM}
Throughout this section, we use the notation and conventions established in \cite{HHAL}. We recall that a {\it generalized Fermat pair} of type $(d;k,n)$ 
consists of a $d$-dimensional smooth complex algebraic variety $M$ together with a group $H\cong (\ZZ/k\ZZ)^n$ of automorphisms such that $M/H$ is the orbifold whose underlying space is the $d$-dimensional complex projective space $\PP^d$ branched along the complete intersection of $n+1$ hyperplanes in general position, each of branch order $k$. In this case we say that $M$ is a {\it generalized Fermat manifold (GFM) of type $(d;k,n)$}. Our main interest is in those GFM that are Calabi--Yau manifolds (we call them {\it Calabi--Yau GFM of type $(d;k,n)$}) and this happens, by using adjunction, when $k(n-d)-n-1=0$ (see also \cite[Proposition~$2$, $(3)$]{HHAL}). As a preliminary step, we recall some basic properties of hyperplanes in general position, which play a key role in 
the definition of generalized Fermat pairs.

Let $d\geq1$, $k\geq 2$, and $n\geq d+1$ be integers. First, observe that each hyperplane $L\subset \PP^d$ has the form
\begin{align*}
    L_q:=\{\rho_1t_1+\dots+\rho_{d+1}t_{d+1}=0\}, \quad \text{where } q:=[\rho_1,\dots, \rho_{d+1}]\in\PP^d.
\end{align*}
Consider now a set $q_1,\dots, q_{n+1}\in \PP^d$ of points and assume that $q_i\not= q_{j}$ for $i\not=j$ and $i,j\in \{1,\ldots,n+1\}$. For $d\geq 2$ and $3\leq s\leq d+1$, we say that the points $q_1,\dots, q_{n+1}\in \PP^d$ are in {\it general position} if any subset of $s$ elements of $\{q_1,\dots, q_{n+1}\}$ spans a $(s-1)$-plane $\Sigma\subset \PP^d$. 

A collection of $(n+1)$ hyperplanes $L_{q_1},\dots,L_{q_{n+1}}\subset \PP^d$ are in \textit{general position} if the corresponding points $q_1,\dots, q_{n+1}\in \PP^d$ are in general position (and so in particular pairwise different). Let $D$ be the divisor formed by the hyperplanes $L_{q_1},\dots,L_{q_{n+1}}$, then $D$ is a normal crossing divisor if the hyperplanes are in general position.

For $1\leq j\leq n+1$, let $q_j=[\rho_{1,j}:\dots:\rho_{d+1,j}]$. One can easily check that the hyperplanes $L_{q_1},\dots,L_{q_{n+1}}\subset \PP^d$ are in general position if and only if, for every $(d+1)\times(d+1)$-minors of the matrix
\begin{align*}
    M(q_1,\ldots,q_{n+1}):=\left(
    \begin{array}{ccc}
        \rho_{1,1} &\dots&\rho_{1,n+1}  \\
        \vdots& \ddots&\vdots\\
         \rho_{d+1,1} &\dots&\rho_{d+1,n+1}
    \end{array}
    \right)\in \text{M}_{(d+1)\times(n+1)}(\CC)
\end{align*}
are nonzero (in particular the matrix has rank $d+1$).

\begin{example}
    For example, let $e_1=[1:0:\dots:0]$, $e_2=[0:1:0:\dots:0]$, $\dots,$ $e_{d+1}=[0:\dots:0:1]$ and $e_{d+2}=[1:\dots:1]$ be points in $\PP^d$. Then the hyperplanes $L_{e_1},\dots,L_{e_{d+2}}\subset \PP^{d}$ are in general position.
\end{example}

We say that two ordered tuples $(L_{p_1},\dots, L_{p_{n+1}})$ and $(L_{q_1},\dots, L_{q_{n+1}})$ of hyperplanes in $\PP^d$ in general position are equivalents if there exists $T\in \PGL_{d+1}(\CC)$ such that $T(L_{p_i})=L_{q_{i}}$, for $i\in\{1,\ldots,n+1\}$. We denote by $X_{n,d}$ the set of such equivalence classes.

If $(L_{q_1},\dots, L_{q_{n+1}})$ is one of such tuples, then there is a unique $T\in\PGL_{d+1}(\CC)$ such that $T(L_{q_j})=L_{e_j}$, for $j\in\{1,\dots,d+2\}$. Then, for each $j\in\{d+3,\dots,n+1\}$ there is some unique $\Lambda_i:=[\lambda_{i,1}:\dots:\lambda_{i,d}:1]\in \PP^d$ for $i\in\{1,\dots,n-d-1\}$ such that $T(L_{q_j})=L_{\Lambda_{j-d-2}}$. In this case, we set $\Lambda:=(\lambda_1,\ldots,\lambda_d)\in\CC^{d(n-d-1)}$, where $\lambda_j=(\lambda_{1,j},\dots,\lambda_{n-d-1,j})\in\CC^{n-d-1}$, for $j\in\{1,\ldots,d\}$. So, if we set
\begin{align*}
    L_j(\Lambda)&:=L_{e_j}\subset \PP^d\quad \text{for }j\in\{1,\dots,d+2\}, \quad \text{and}\\
    L_j(\Lambda)&:=L_{\Lambda_{j-d-2}}\subset\PP^d \quad \text{for }j\in\{d+3,\dots,n+1\},
\end{align*}
then, the tuples $(L_1(\Lambda),\dots ,L_{n+1}(\Lambda))$ and $(L_{q_1},\dots,L_{q_{n+1}})$ are equivalent. We call the tuple $\Lambda$ a {\it standard parameter}.

Hence, we may identify 
\begin{itemize}
    \item [$(i)$] $X_{d+1,d}$ with the single point $(1,\ldots,1)\in\mathbb{C}^d$ and
    \item [$(ii)$] for $n\geq d+2$, $X_{n,d}$ with the set of tuples $\Lambda\in\CC^{d(n-d-1)}$ such that the hyperplanes $L_1(\Lambda),\dots ,L_{n+1}(\Lambda)$ are in general position. Note that $X_{n,d}$ is a non-empty open set.
\end{itemize}

Now, let $(M,H)$ be a generalized Fermat pair of type $(d;k,n)$ and $\pi:M\to \PP^d$ be an abelian branched covering with deck group $H$. Up to post-composition by some linear automorphism of $\PP^d$, we may assume that its branch locus is given by the hyperplanes $L_1(\Lambda),\dots,L_{n+1}(\Lambda)$ where $\Lambda:=(\lambda_1,\dots, \lambda_d)\in X_{n,d}$ and  $\lambda_i:=(\lambda_{1,i}\ ,\ldots,\lambda_{n-d-1,i}\ )\in\CC^{n-d-1}$ for $i=1,\ldots,d$. 

We will consider the complex projective algebraic set $M_n^k(\Lambda)$ in $\PP^n$ defined as follow
\begin{align}\label{equation_Fermat}
    M_n^k(\Lambda):=\left\{
    \begin{array}{rrrrrrrrc}
         x_1^k&+\dots+ &x_d^k &+&x_{d+1}^k&+&x_{d+2}^k&=&0  \\
        \lambda_{1,1} x_1^k&+\dots+ &\lambda_{1,d}x_d^k &+&x_{d+1}^k&+&x_{d+3}^k&=&0\\
         \vdots\hspace{0.25cm}&       & \vdots  \hspace{0.25cm}& &      \vdots\hspace{0.6cm}& &\vdots\hspace{0.60cm}  & &\vdots\\
         \lambda_{n-d-1,1}x_1^k&+\dots+ &\lambda_{n-d-1,d}x_d^k &+&x_{d+1}^k&+&x_{n+1}^k&=&0
    \end{array}
    \right.
\end{align}

The following propositions shows that $M_n^k(\Lambda)$ is a simply connected irreducible nonsingular complete intersection

\begin{proposition}[{\cite[Proposition~$1$, Remark~$3$]{HHAL}}]\label{connected_irred}
    If $\Lambda\in X_{n,d}$ then $M_n^k(\Lambda)$ defined in \eqref{equation_Fermat} is an irreducible nonsingular complete intersection. Moreover, if $d \geq 2$, 
then $M^k_n(\Lambda)$ is simply connected.
\end{proposition}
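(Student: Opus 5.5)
The plan is to prove the three assertions in turn: that $M_n^k(\Lambda)$ is a complete intersection, that it is nonsingular, and that it is irreducible and, for $d\geq 2$, simply connected. Smoothness and the complete intersection property will come from a Jacobian computation. Irreducibility and simple connectivity will then follow from the Lefschetz-type theorems for smooth complete intersections.

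Write $F_0,\dots,F_{n-d-1}$ for the $n-d$ equations in \eqref{equation_Fermat}. Each equation has the form $F_j=\sum_{i=1}^{d+1}c_{j,i}x_i^k + x_{d+2+j}^k$, where the $c_{j,i}$ are the rows of the first $d+1$ columns of the coefficients. The Jacobian is then $k$ times the $(n-d)\times(n+1)$ matrix whose $(j,i)$ entry is $c_{j,i}x_i^{k-1}$ for $i\leq d+1$, and whose entry in column $d+2+j$ is $\delta$-like, namely $x_{d+2+j}^{k-1}$ in row $j$ and $0$ elsewhere.

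Suppose $p\in M_n^k(\Lambda)$ and the Jacobian has rank $<n-d$ at $p$. Factoring out the diagonal matrix $\diag(x_1^{k-1},\dots,x_{n+1}^{k-1})$, rank deficiency says the following: some nonzero linear combination $\sum_j a_jF_j$ has all partial derivatives vanishing at $p$. This forces $x_i=0$ for every $i$ whose coefficient $\ell_i$ in $\sum_j a_j(\text{coefficient vector})$ is nonzero. The key linear-algebra step is to reinterpret these coefficient vectors. Rewrite the system as $n+1$ linear forms $y_i=x_i^k$ satisfying $n-d$ linear relations. Equivalently, the points $[y_1:\dots:y_{n+1}]$ lie in a $d$-plane $V\cong\PP^d$, and the coordinate hyperplanes restricted to $V$ are exactly the $n+1$ hyperplanes $L_1(\Lambda),\dots,L_{n+1}(\Lambda)$. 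Indeed $\pi:[x]\mapsto[x_1^k:\dots:x_{d+1}^k]$ is the covering to $\PP^d$.

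The relations $\ell$ then range over the annihilator of $V$. General position (every $(d+1)$-minor of $M(q_1,\dots,q_{n+1})$ nonzero) is equivalent to the statement that every nonzero relation $\ell$ has at least $d+2$ nonzero entries. So $p$ would have at least $d+2$ coordinates equal to zero. Then $\pi(p)$ would lie on $d+2$ of the hyperplanes $L_i$, which is impossible since any $d+1$ of them already meet only in the empty set. Hence the Jacobian has full rank $n-d$ everywhere on $M_n^k(\Lambda)$. Consequently $M_n^k(\Lambda)$ is smooth of pure dimension $d$ and cut out by $n-d$ equations, so it is a nonsingular complete intersection. Also $M_n^k(\Lambda)$ is nonempty, as is clear from the surjective map $\pi$ onto $\PP^d$ via the linear parametrization.

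For irreducibility, I will use that a complete intersection of positive dimension in $\PP^n$ is connected, by Hartshorne's connectedness, or since $H^0(\OO)=\CC$ from the Koszul resolution. A connected smooth variety is irreducible. For $d\geq 2$, the Lefschetz hyperplane theorem for smooth complete intersections (Hamm, Grothendieck SGA2) gives $\pi_1(M_n^k(\Lambda))\cong\pi_1(\PP^n)=1$ whenever $\dim\geq 2$.

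The main obstacle is the linear-algebra translation in the smoothness step. It must be made precise that rank deficiency yields a relation vector $\ell$ in the row space, and that general position of the $q_j$ is exactly the condition that nonzero vectors in this row space have support of size $\geq d+2$. I would check this via the standard duality between the $(d+1)$-minors of $M(q)$ and the maximal minors of the coefficient matrix of \eqref{equation_Fermat}, whose columns are the Gale dual configuration.
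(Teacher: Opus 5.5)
Your proof is correct. The paper gives no argument for this proposition; it quotes it from \cite{HHAL} (Proposition~1 and Remark~3 there), so there is no internal proof to compare against. Your proposal supplies a self-contained argument along the natural lines: the Jacobian criterion plus general position for smoothness and the complete-intersection property, connectedness of positive-dimensional complete intersections plus smoothness for irreducibility, and the Lefschetz theorem for smooth complete intersections for $\pi_1$ when $d\geq 2$.

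The step you flag as the main obstacle is easier than you expect and needs no Gale duality. Let $C$ be the $(n-d)\times(n+1)$ coefficient matrix of \eqref{equation_Fermat}. It has rank $n-d$ because of its identity block, so $a\neq 0$ gives $\ell=a^{T}C\neq 0$. Its row space is exactly $W^{\perp}$, where $W=\ker C$ and $\dim W=d+1$. Parametrize $W$ by $t=(y_1,\ldots,y_{d+1})$. Then the restricted coordinate forms $e_i^{*}|_W$ have coefficient vectors $q_i$ for $i\leq d+1$ and $-q_i$ for $i\geq d+2$; up to sign they are the columns of $M(q_1,\ldots,q_{n+1})$. Hence a relation $\ell\in W^{\perp}$ with support $T$ is literally a nontrivial linear dependence among the columns $q_i$, $i\in T$. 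If $|T|\leq d+1$, enlarge $T$ to $d+1$ columns (possible since $n+1\geq d+2$); some $(d+1)$-minor must then vanish.

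For the final contradiction, $d+1$ vanishing coordinates already suffice. The vector $y=(x_1^k,\ldots,x_{n+1}^k)$ is a nonzero element of $W$, and any $d+1$ of the forms $e_i^{*}|_W$ form a basis of $W^{*}$. So every point of $M_n^k(\Lambda)$ has at most $d$ zero coordinates, while every nonzero $\ell\in W^{\perp}$ has support of size at least $d+2$.

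Your support formulation also gives the converse for free. If some $(d+1)$-minor vanishes, the resulting $\ell$ has support $T$ with $|T|\leq d+1$. The forms $e_i^{*}|_W$, $i\in T$, are then dependent, so they have a common nonzero zero $y\in W$. Taking $k$-th roots produces a point of $M_n^k(\Lambda)$ where the Jacobian drops rank. Thus general position is exactly the smoothness condition, which the bare citation does not make visible.
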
 

Hence, we may and will assume that $M_n^k(\Lambda)$ has codimension $t := n-d$, in other words the nonsingular complete intersection $M_n^k(\Lambda)$ is defined by $t$ polynomials of Fermat type all with the same degree $k$. Now, let us consider the group $H_0=\langle\varphi_1,\ldots\varphi_{n+1}\rangle$, where 
$$
\varphi_i\left(\left[x_1:\dots: x_i:\dots: x_{n+1}\right]\right)=\left[x_1:\dots: \xi_kx_i:\dots: x_{n+1}\right]
$$ 
and $\xi_k$ is a primitive $k$-root of unity that is the same for all $i=1,\ldots,n+1$. Then an element $\varphi\in H_0$ is given as $\varphi:=\varphi_1^{m_1}\cdots \varphi_{n+1}^{m_{n+1}}$, where every $m_i\in \ZZ/k\ZZ$ and observe that $\varphi_1\circ\cdots\circ\varphi_{n+1}=\Id$. Moreover, we have that $H_0<\Aut(M_{n}^k(\Lambda))<\PGL_{n+1}(\CC)$.

As a consequence, any generalized Fermat pair $(M,H)$ of type $(d;k,n)$, 
where $M$ is a $d$-dimensional smooth variety and $H\cong(\ZZ/k\ZZ)^n$ is a 
group of automorphisms of $M$, is given as in \eqref{equation_Fermat} 
with the group $H_0$. This is stated in the following theorem:
\begin{theorem}[{\cite[Theorem~$1$]{HHAL}}]
    The pair $(M_n^k(\Lambda),H_0)$ is a generalized Fermat pair of type 
    $(d;k,n)$. Moreover, there exists a biholomorphism between $M$ and 
    $M_n^k(\Lambda)$ conjugating $H$ to $H_0$. In particular, every 
    generalized Fermat manifold of type $(d;k,n)$ whose associated 
    branching divisor is $\PGL_{d+1}(\CC)$-equivalent to the divisor 
    associated to $\Lambda$, is isomorphic to $M_n^k(\Lambda)$.
\end{theorem}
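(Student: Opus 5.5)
We need to prove the theorem of HHAL: $(M_n^k(\Lambda),H_0)$ is a generalized Fermat pair of type $(d;k,n)$, and any generalized Fermat pair $(M,H)$ with the same branch divisor is conjugate to it.

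\textbf{Step 1: $(M_n^k(\Lambda),H_0)$ is a generalized Fermat pair.} By \cref{connected_irred}, $M_n^k(\Lambda)$ is a smooth irreducible complete intersection of dimension $d$. The group $H_0\cong(\ZZ/k\ZZ)^{n+1}/\langle\varphi_1\cdots\varphi_{n+1}\rangle\cong(\ZZ/k\ZZ)^n$ preserves the equations, since only $k$-th powers occur.

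Consider the map $\pi\colon M_n^k(\Lambda)\to\PP^d$, $[x]\mapsto[x_1^k:\dots:x_{d+1}^k]$.
\begin{itemize}
\item It is well defined: if $x_1=\dots=x_{d+1}=0$, the equations force $x_{d+2}=\dots=x_{n+1}=0$.
\item Its fibres are exactly the $H_0$-orbits. Indeed, $t=(x_1^k,\dots,x_{d+1}^k)$ determines every $x_j^k$ through the linear equations, namely $x_{d+2}^k=-\sum_i t_i$ and $x_{d+2+r}^k=-(\sum_i\lambda_{r,i}t_i+t_{d+1})$.
\item It is surjective, because $k$-th roots can always be extracted. Hence $M/H_0\cong\PP^d$.
\end{itemize}
Isotropy occurs precisely where some coordinate $x_j$ vanishes. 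The locus $\{x_j=0\}$ maps onto the hyperplane $L_j(\Lambda)$: for $j\le d+1$ this is $t_j=0$, and for $j\ge d+2$ it is the corresponding linear form. The stabilizer of a general point of $\{x_j=0\}$ is $\langle\varphi_j\rangle\cong\ZZ/k\ZZ$, so each $L_j(\Lambda)$ has branch order $k$. This gives the orbifold structure.

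\textbf{Step 2: uniqueness.} Let $(M,H)$ be a generalized Fermat pair with quotient $\pi\colon M\to\PP^d$ branched over $D=\sum L_j$ with orders $k$. Since $\PP^d\setminus D$ has abelian-relevant fundamental group, $H_1(\PP^d\setminus D)\cong\ZZ^{n+1}/(1,\dots,1)\cong\ZZ^n$, generated by meridians $c_j$ with $\sum c_j=0$. The unramified part of the cover corresponds to a surjection $\theta\colon\ZZ^n\to H\cong(\ZZ/k\ZZ)^n$ in which each $c_j$ maps to an element of order exactly $k$.

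Since $H$ is abelian of exponent $k$, $\theta$ factors through $(\ZZ/k\ZZ)^n$. Both groups have order $k^n$, so $\ker\theta=k\ZZ^n$. This kernel is canonical, so any two such covers define the same subgroup of $\pi_1(\PP^d\setminus D)$. Therefore the unramified covers are isomorphic over $\PP^d\setminus D$, equivariantly after an automorphism of groups.

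\textbf{Step 3: extension and the main obstacle.} By normalization (the Grauert--Remmert extension theorem), the isomorphism extends to an isomorphism of the normal compactifications $M\cong M_n^k(\Lambda)$ that conjugates $H$ to $H_0$. Finally, applying a projective transformation reduces any branch divisor to the standard form $L_j(\Lambda)$, which gives the last assertion.

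The delicate point is the group-theoretic step in Step 2: showing that the homology kernel is forced to be $k\ZZ^n$. This uses that $H$ has exponent $k$ and order exactly $k^n$, combined with the computation of $H_1$ of a hyperplane-arrangement complement.
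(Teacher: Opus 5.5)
The paper contains no proof of this statement; it is quoted from \cite[Theorem~1]{HHAL}, so there is no internal argument to compare against. Your argument is correct, and it is the standard one.

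\textbf{Existence.} The map $[x]\mapsto[x_1^k:\dots:x_{d+1}^k]$ exhibits $M_n^k(\Lambda)$ as an $H_0$-Galois cover of $\PP^d$. It is branched with order exactly $k$ along the hyperplanes $L_j(\Lambda)$ and nowhere else.

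\textbf{Uniqueness.} Any surjection from $H_1(\PP^d\setminus D,\ZZ)\cong\ZZ^{n+1}/\ZZ(1,\dots,1)\cong\ZZ^n$ onto a group of order $k^n$ and exponent $k$ has kernel $k\,H_1$. This subgroup does not depend on the cover, so the two unramified parts are isomorphic over $\PP^d\setminus D$. The isomorphism conjugates the full deck groups. Grauert--Remmert, or equivalently normalization of $\PP^d$ in the function field, then extends it over $D$.

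\textbf{What your route buys.} The only input you need about $\pi_1(\PP^d\setminus D)$ is its abelianization. The correct justification for this is that $H$ is abelian, not any special property of $\pi_1$; your phrase ``abelian-relevant'' should be replaced by that. Consequently you do not need the Zariski--Hattori theorem that $\pi_1$ of a general-position arrangement complement is abelian for $d\ge 2$. You also treat $d=1$, where $\pi_1$ is free, uniformly. General position enters only through the smoothness of $M_n^k(\Lambda)$.

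\textbf{Routine points to state explicitly.}
\begin{enumerate}
\item $M$ is irreducible. Hence $M^0=\pi^{-1}(\PP^d\setminus D)$ is connected and the monodromy map onto $H$ is surjective.
\item The finite bijective map $M_n^k(\Lambda)/H_0\to\PP^d$ has degree one, since $H_0$ acts freely on the points with all coordinates nonzero. It is therefore an isomorphism by Zariski's main theorem, because $\PP^d$ is normal.
\item In the last assertion only the unordered divisor matters. The kernel $k\,H_1$ is insensitive to how the hyperplanes are labelled, so no ordering issue arises.
\end{enumerate}
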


For simplicity from now on we denote by $M:=M_n^k(\Lambda)$, $H:=H_0$, we let $\Fix_{\varphi_i}(M)\subset M$ be the set of fixed points of $\varphi_i$, and set $$F(H):=\bigcup_{i=1}^{n+1}\Fix_{\varphi_i}(M).$$ 
Observe that $\Fix_{\varphi_i}(M)=\{x_i=0\}\cap M$ is a GFM of type $(d-1; k, n-1)$ for $d\geq2$, while for $d=1$ it consists of $k^{n-1}$ points by \cite[Section~$2.2.2$, $(IV)$]{HHAL}.
For $\varphi:=\varphi_1^{m_1}\cdots \varphi_{n+1}^{m_{n+1}}\in H$, we associate the tuple $m_{\varphi}:=(m_1,\ldots,m_{n+1})\in \left(\ZZ/k\ZZ\right)^{n+1}$. Then, for each $\ell\in \ZZ/k\ZZ$, we define
\begin{align*}
    L_\ell(m_{\varphi}):=\left\{j\in \{1,\ldots,n+1\}\mid m_j=\ell\right\},
\end{align*}
and the algebraic sets
\begin{align*}
    \widetilde{F}_\ell(m_{\varphi})=\left\{[x_1:\cdots:x_{n+1}]\in \PP^n \mid x_i=0,\  \text{for all }i\not\in L_\ell(m_{\varphi})\right\},
\end{align*}
and
\begin{align*}
    {F}_\ell(m_{\varphi})=\widetilde{F}_\ell(m_{\varphi})\ \cap\ M.
\end{align*}

The following proposition gives us the locus of fixed points on $M$ of every element of $H$:

\begin{proposition}[{\cite[Proposition~$3$]{HHAL}}]\label{fixed_points_1}
    Let $d\geq 1$, $n\geq d+1$, $k\geq 2$. Let $\varphi:=\varphi_1^{m_1}\cdots \varphi_{n+1}^{m_{n+1}}\in H$, different from the identity. Then its locus of fixed points is the disjoint union of the sets $F_\ell(m_{\varphi})$, where $\ell\in\ZZ/k\ZZ$. With the same notations as before, we have:
    \begin{itemize}
        \item [$(i)$] $F_\ell(m_{\varphi})\not=\emptyset$ if and only if $\# L_\ell(m_{\varphi})\geq n-d+1$.
        \item [$(ii)$] If $F_\ell(m_{\varphi})\not=\emptyset$, then it is a GFM of dimension $\# L_\ell(m_{\varphi})+d -n-1$.
    \end{itemize}
\end{proposition}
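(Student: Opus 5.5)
The plan is to compute the fixed locus directly from the equations \eqref{equation_Fermat}. Since $\varphi$ acts on $\PP^n$ by $\diag(\xi_k^{m_1},\ldots,\xi_k^{m_{n+1}})$, a point $[x]\in\PP^n$ is fixed if and only if there is a scalar $c\in\CC^*$ with $\xi_k^{m_j}x_j=c\,x_j$ for all $j$. Because $x\neq 0$, the scalar $c$ equals $\xi_k^{m_j}$ for some $j$, so $c=\xi_k^\ell$ for some $\ell\in\ZZ/k\ZZ$. The coordinates $x_j$ with $m_j\neq\ell$ must then vanish. Hence the fixed locus of $\varphi$ on $\PP^n$ is the union over $\ell$ of the linear spaces $\widetilde F_\ell(m_\varphi)$. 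These are pairwise disjoint, since the index sets $L_\ell(m_\varphi)$ are disjoint. Intersecting with $M$ gives the disjoint union of the $F_\ell(m_\varphi)$.

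Next I analyze a single $F_\ell:=F_\ell(m_\varphi)$, and set $L:=L_\ell(m_\varphi)$ and $r:=\#L$. Restricting \eqref{equation_Fermat} to $\widetilde F_\ell\cong\PP^{r-1}$ amounts to setting $x_i=0$ for $i\notin L$. Iterating the earlier remark that $\Fix_{\varphi_i}(M)=\{x_i=0\}\cap M$ is a GFM of type $(d-1;k,n-1)$, I expect the following: setting one coordinate to zero turns the branching data of $n+1$ hyperplanes in general position in $\PP^d$ into $n$ hyperplanes in general position in $\PP^{d-1}$, namely the restrictions to the hyperplane $L_i$. After $n+1-r$ such steps we get a GFM of type $(d-(n+1-r);k,n-(n+1-r))$, provided $d-(n+1-r)\geq 0$. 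To make this rigorous I would argue directly with the coefficient matrix. The $t=n-d$ equations in the $r$ remaining variables $y_j=x_j^k$ form a linear system whose $t\times(n+1)$ coefficient matrix is the Gale-dual of $M(q_1,\ldots,q_{n+1})$. General position of the hyperplanes is equivalent to every $t\times t$ minor of this dual matrix being nonzero.

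Two cases follow. If $r\geq t+1=n-d+1$, the restricted matrix ($t\times r$) has full rank $t$ with all maximal minors nonzero. So the locus is again a Fermat-type complete intersection of $t$ equations in $\PP^{r-1}$ with branch data in general position. By \cref{connected_irred} it is nonempty, smooth and of dimension $r-1-t=r+d-n-1$, and after the normalization of \cref{GFM} it is a GFM. If $r\leq t$, take $r$ columns, which form a $t\times r$ matrix of rank $r$. The system $\sum_{j\in L}a_{ij}y_j=0$ then forces all $y_j=0$, hence all $x_j=0$, which is not a point of $\PP^n$; so $F_\ell=\emptyset$. This proves $(i)$ and $(ii)$.

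The main obstacle is the translation between general position of the hyperplanes (nonvanishing $(d+1)$-minors of $M(q_1,\dots,q_{n+1})$) and nonvanishing of the maximal minors of the equation matrix. This is the standard Gale-duality fact that complementary maximal minors of a matrix and of a basis of its kernel agree up to a common scalar and sign. Once this is settled, both cases reduce to linear algebra plus \cref{connected_irred}. The dimension count $r+d-n-1$ also matches the iterated-hyperplane heuristic.
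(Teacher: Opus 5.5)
Your argument is correct. Note that the paper gives no proof of this statement: it is imported from \cite[Proposition~3]{HHAL}. The only hint of a route is the preceding remark that $\Fix_{\varphi_i}(M)=\{x_i=0\}\cap M$ is a GFM of type $(d-1;k,n-1)$, and consists of $k^{n-1}$ points when $d=1$.

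Iterating that remark over the indices $i\notin L_\ell(m_\varphi)$ is the natural alternative. Each cut lowers the dimension by one until you reach $k^{t}$ points at $r=t+1$. Any further cut is empty, because at those points all remaining coordinates are nonzero (distinct branch points of $\PP^1$ never coincide). Each step, however, silently uses that the restricted arrangement is again in general position, and that is exactly the minor computation you make explicit.

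Your route does everything in one stroke. In the standard parametrization $M(q_1,\ldots,q_{n+1})=[I_{d+1}\mid C^T]$, and the equation matrix in the variables $x_j^k$ is $A=[C\mid I_t]$. Complementary maximal minors of these two matrices agree up to sign. Hence emptiness for $r\le t$, the dimension $r-t-1$, and general position of the restricted data all reduce to linear algebra. As a bonus, this supplies the justification that the proof of \cref{fixed_points_2} only asserts (``the general position condition ensures \ldots a regular sequence'', and transversality).

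Two small points to tidy:
\begin{itemize}
\item Before invoking \cref{connected_irred}, you must actually bring the restricted system to the normal form \eqref{equation_Fermat}. This works because, after row-reducing $A_L$ to $[C'\mid I_t]$, every entry of $C'$ is a ratio of maximal minors of $A_L$ and hence nonzero. Rescaling the rows, and rescaling the variables by $k$-th roots, then normalizes it.
\item The case $r=t+1$ lies outside the scope of \cref{connected_irred}, which concerns GFMs of dimension at least one. Treat it directly: by Cramer's rule, $\ker A_L$ is a line spanned by a vector with all coordinates nonzero. This gives exactly $k^{t}$ reduced points, in agreement with \cref{fixed_points_2}.
\end{itemize}
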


\begin{proposition}\label{fixed_points_2}
    Let $d\geq 1$, $n\geq d+1$, $k\geq 2$ and notations as before. Let $\varphi:=\varphi_1^{m_1}\cdots \varphi_{n+1}^{m_{n+1}}\in H$, different from the identity. In particular, the number of connected components of its locus of fixed points (if non empty) is equal to
        \begin{align*}
        \sum_{\ell\,\,\mbox{\tiny{such that}}\,\#L_\ell(m_\varphi)\geq n-d+1} c_\ell, \qquad 
        c_\ell=\begin{cases}1 & \text{if } \#L_\ell(m_\varphi)>n-d+1,\\ k^{n-d} & \text{if } \#L_\ell(m_\varphi)=n-d+1.\end{cases}
\end{align*}
  
\end{proposition}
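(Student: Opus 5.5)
By \cref{fixed_points_1}, the fixed locus of $\varphi$ is the disjoint union $\bigsqcup_{\ell\in\ZZ/k\ZZ}F_\ell(m_\varphi)$. Moreover, whenever $F_\ell(m_\varphi)$ is nonempty it is itself a GFM, of type $(d_\ell;k,n_\ell)$ with $n_\ell:=\#L_\ell(m_\varphi)-1$ and $d_\ell:=\#L_\ell(m_\varphi)+d-n-1$. So the count of connected components reduces to one question: how many connected components does a single GFM have? I would then sum the answer over those $\ell$ with $\#L_\ell(m_\varphi)\geq n-d+1$.

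First I make the identification explicit. On $\widetilde F_\ell(m_\varphi)\cong\PP^{\#L_\ell-1}$, with coordinates $x_j$ for $j\in L_\ell$, I restrict the $n-d$ equations of \eqref{equation_Fermat} by setting $x_i=0$ for $i\notin L_\ell$. The result is a system of $n-d$ linear equations in the $k$-th powers $x_j^k$, with coefficient matrix given by the corresponding columns of the $(n-d)\times(n+1)$ coefficient matrix of \eqref{equation_Fermat}. General position of the hyperplanes ensures that every $(n-d)$-subset of these columns is independent. Hence, after a linear change of the equations, we again get a Fermat-type system of the shape \eqref{equation_Fermat} with parameters in $X_{n_\ell,d_\ell}$, of dimension $d_\ell=\#L_\ell-(n-d)-1$. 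This recovers \cref{fixed_points_1}(ii).

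Next come the two cases. If $\#L_\ell>n-d+1$, then $d_\ell\geq1$, and \cref{connected_irred} says that $F_\ell(m_\varphi)$ is an irreducible nonsingular complete intersection, in particular connected, so $c_\ell=1$. If $\#L_\ell=n-d+1$, then $d_\ell=0$ and $F_\ell(m_\varphi)$ is finite. Solving the linear system for the powers determines $[x_{j_1}^k:\dots:x_{j_{n-d+1}}^k]$ uniquely as a point with all coordinates nonzero, again by general position (all maximal minors are nonzero). Normalizing $x_{j_1}=1$, each remaining coordinate has exactly $k$ choices of $k$-th root, giving $k^{n-d}$ distinct points. This is also the case $d=1$ statement quoted from \cite{HHAL}, that a GFM of dimension $0$ of type $(0;k,n-d)$ consists of $k^{n-d}$ points. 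So $c_\ell=k^{n-d}$.

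Finally, the sets $F_\ell(m_\varphi)$ for distinct $\ell$ lie in disjoint coordinate subspaces, since the $L_\ell$ partition $\{1,\dots,n+1\}$. They are closed and pairwise disjoint, so the components add, which gives the stated sum. The main obstacle is the general-position check in the first step: the restricted system must have full rank $n-d$ and nonzero maximal minors, so that the $d_\ell\geq1$ case is a genuine nonsingular GFM and the $d_\ell=0$ case has all solution coordinates nonzero. I would verify this by reading the minors of the coefficient matrix of \eqref{equation_Fermat} as the Gale-dual minors of $M(q_1,\ldots,q_{n+1})$, all of which are nonzero.
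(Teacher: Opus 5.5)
Your proposal is correct. For $\#L_\ell(m_\varphi)>n-d+1$ you argue exactly as the paper does: $F_\ell(m_\varphi)$ is a positive-dimensional GFM, hence irreducible by \cref{connected_irred}. In the zero-dimensional case, however, you take a genuinely different route.

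The paper restricts the $n-d$ equations to $\widetilde F_\ell(m_\varphi)\cong\PP^{n-d}$ and states that general position makes them a regular sequence. It then applies B\'ezout to get $k^{n-d}$ points counted with multiplicity. Finally it invokes transversality (again ``by general position'') to conclude that every multiplicity is one. It asserts both the regular-sequence property and the transversality rather than checking them.

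You instead read the restricted system as linear in the $k$-th powers. The $(n-d)\times(n-d+1)$ coefficient matrix has all maximal minors nonzero. By Cramer's rule its kernel is a line spanned by a vector with no zero entries. Normalizing $x_{j_1}=1$ and extracting $k$-th roots then gives exactly $k^{n-d}$ distinct points.

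This argument is more elementary and self-contained. It describes the fixed points explicitly; in particular, all their coordinates indexed by $L_\ell(m_\varphi)$ are nonzero. It also removes the multiplicity issue entirely.

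The one input your route needs is that every maximal minor of the coefficient matrix of \eqref{equation_Fermat} is nonzero, and your Gale-duality remark settles it correctly. That matrix is $[\,\tilde\Lambda\mid I_{n-d}\,]$, where $\tilde\Lambda$ has rows $(\lambda_{i,1},\dots,\lambda_{i,d},1)$ and $\lambda_{0,j}=1$. Meanwhile $M(q_1,\dots,q_{n+1})=[\,I_{d+1}\mid\tilde\Lambda^{T}\,]$. Hence maximal minors on complementary column sets agree up to sign.

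The paper's B\'ezout argument is shorter and uses standard intersection-theoretic language, but it leaves exactly this verification implicit.
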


\begin{proof}
   
   By \cref{fixed_points_1}, the locus of fixed points of $\varphi$ on $M$ is the disjoint union of the non-empty sets $F_\ell(m_\varphi)$, over  those $\ell \in \mathbb{Z}/k\mathbb{Z}$ with  $\#L_\ell(m_\varphi) \geq n-d+1$. We count the connected components of each $F_\ell(m_\varphi)$ separately.
   
   Assume first that $\#L_\ell(m_\varphi) > n-d+1$. Again by \cref{fixed_points_1}, $F_\ell(m_\varphi)$ is a GFM of positive dimension $\#L_\ell(m_\varphi) + d - n - 1 \geq 1$, hence irreducible by \cref{connected_irred}, and in particular connected. Therefore $c_\ell = 1$.

   Now, assume that $\#L_\ell(m_\varphi) = n-d+1$. Then $F_{\ell}(m_\varphi)$ has dimension $\#L_{\ell}(m_\varphi) + d - n - 1 = 0$, so it is a  finite set of isolated points, each of which is its own connected component. We show that $\#F_{\ell}(m_\varphi) = k^{n-d}$, so that $F_{\ell}(m_\varphi)$ contributes exactly $c_{\ell} = k^{n-d}$ connected components. We establish this claim by applying B\'ezout's theorem, as follows.

   Write $L_{\ell} (m_{\varphi}) = \{i_1,\ldots,i_{n+1-d}\}$ and introduce new coordinates $y_1 = x_{i_1}, \ldots, y_{n+1-d} = x_{i_{n+1-d}}$ for the variables whose indices belong to $L_{\ell}(m_{\varphi})$. Since $\#L_{\ell}(m_{\varphi})= n+1-d$, then  $$\widetilde{F}_{\ell}(m_{\varphi}) \cong \mathbb{P}^{\#L_{\ell}(m_{\varphi})- 1} = \mathbb{P}^{n-d},$$ with homogeneous coordinates $[y_1:\cdots:y_{n+1-d}]$.
   
   The variety $M$ is defined in $\mathbb{P}^n$ by exactly $n-d$ homogeneous equations of degree $k$ (see \eqref{equation_Fermat}):
   \begin{equation}\label{eq:system}
   \sum_{j=1}^{d} \lambda_{i,j}\, x_j^k + x_{d+1}^k + x_{d+2+i}^k = 0, \qquad i = 0, 1, \ldots, n-d-1,
   \end{equation} 
   where we put $\lambda_{0,j} = 1$ for all $j$. Upon restricting to $\widetilde{F}_{\ell}(m_{\varphi})$ (that is, setting $x_i = 0$ for all $i \notin L_{\ell}(m_{\varphi})$), each equation in \eqref{eq:system} becomes a homogeneous polynomial of degree $k$ in the variables $y_1, \ldots, y_{n+1-d}$. We thus obtain a system
   $$g_0(y_1,\ldots,y_{n+1-d}) = 0,\ \ldots,\ g_{n-d-1}(y_1,\ldots,y_{n+1-d}) = 0$$ 
   of $n-d$ equations of degree $k$ in $\mathbb{P}^{n-d}$, whose solution set is precisely $F_\ell(m_{\varphi}) = \widetilde{F}_{\ell}(m_{\varphi}) \cap M$.

   We verify that this system is a zero-dimensional complete intersection in $\mathbb{P}^{n-d}$, i.e., that the $n-d$ hypersurfaces $\{g_i = 0\}$ intersect in finitely many points. Indeed, since $\Lambda \in X_{n,d}$, the $n+1$ hyperplanes are in general position, which means that every $(d+1)\times(d+1)$ minor of the matrix of coefficients $M(q_1,\ldots,q_{n+1})$ is nonzero. This general position condition ensures that the restrictions $g_0,\ldots,g_{n-d-1}$ form a regular sequence in the polynomial ring $\mathbb{C}[y_1,\ldots,y_{n+1-d}]$, so their common zero locus in $\mathbb{P}^{n-d}$ has dimension $$\dim \mathbb{P}^{n-d} - (n-d) = (n-d) - (n-d) = 0\,,$$ as required.
    
    Since we have $n-d$ hypersurfaces of degree $k$ in $\mathbb{P}^{n-d}$ forming a zero-dimensional complete intersection, B\'ezout's theorem gives that their intersection consists of exactly $k^{n-d}$ points counted with multiplicity. Since $M$ is nonsingular and the intersection is transverse at every point (again by the general position condition on $\Lambda$), the fixed locus is also smooth and all multiplicities are equal to $1$, so $F_\ell(m_{\varphi})$ consists of exactly $k^{n-d}$ points. 
    
    Each of these $k^{n-d}$ points is an isolated point of $F_{\ell}(m_{\varphi})$, hence a connected component. Therefore $F_{\ell}(m_{\varphi})$ contributes with $c_{\ell} = k^{n-d}$ connected components as we wanted to show. Combining both cases gives the formula.
\end{proof}

We will consider here special GFM, which we call of \textit{Calabi--Yau} type:
\begin{proposition}\label{condition_CY}
    Let $n\geq d+1$, $d\geq 2$ and $k\geq 2$. Assume that $M$ is a GFM of type $(d;k,n)$. Then, $M$ is a Calabi--Yau manifold  if and only if $k(n-d)-n-1=0$. 
\end{proposition}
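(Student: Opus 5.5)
The plan is to use that $M=M_n^k(\Lambda)$ is a smooth complete intersection in $\PP^n$ of $t=n-d$ hypersurfaces, each of degree $k$; this is \cref{connected_irred}, which also gives irreducibility and, for $d\geq 2$, simple connectedness. Adjunction then gives
\begin{align*}
\omega_M\cong \OO_{\PP^n}\bigl(-(n+1)+k(n-d)\bigr)|_M=\OO_M\bigl(k(n-d)-n-1\bigr).
\end{align*}

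For the direction ``if'', assume $k(n-d)-n-1=0$. Then $\omega_M\cong\OO_M$, so $K_M$ is trivial. It remains to show that $H^0(M,\Omega_M^j)=0$ for $0<j<d$. I would invoke the standard Lefschetz-type vanishing for smooth complete intersections of dimension $d$ in projective space: $H^{j,0}(M)=0$ for $0<j<d$. One route is the Lefschetz hyperplane theorem, which gives $H^i(M,\CC)\cong H^i(\PP^n,\CC)$ for $i<d$. For $0<j<d$ this yields $h^{j,0}(M)\leq b_j(M)=b_j(\PP^n)$. When $j$ is odd this is $0$. When $j$ is even, Hodge symmetry shows the class of degree $j$ comes from $\PP^n$ and is of type $(j/2,j/2)$, so $h^{j,0}=0$.

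An alternative, purely algebraic route uses the Koszul resolution of $\OO_M$ together with Bott vanishing on $\PP^n$. This gives $H^i(M,\OO_M)=0$ for $0<i<d$, and Hodge symmetry converts this into $h^{0,i}=h^{i,0}=0$. Simple connectedness for $d\geq 2$ is a bonus and is not required by the definition used in the paper. Hence $M$ is a Calabi--Yau manifold.

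For the direction ``only if'', suppose $M$ is Calabi--Yau, so that $\omega_M\cong\OO_M$. Then $\OO_M(k(n-d)-n-1)\cong\OO_M$. Since $\OO_M(1)$ is ample and $\dim M=d\geq 2>0$, no nonzero multiple $\OO_M(a)$ is trivial. One way to see this is to compute the degree $\int_M c_1(\OO_M(1))^{d-1}\cdot a\,c_1(\OO_M(1))=a\,k^{n-d}$, which must vanish, forcing $a=0$. Therefore $k(n-d)-n-1=0$.

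The only nonroutine step is the vanishing of the intermediate holomorphic forms. I would cite it as the classical result for complete intersections, via the Lefschetz hyperplane theorem or the Koszul complex computation, rather than reproving it; the rest is adjunction bookkeeping.
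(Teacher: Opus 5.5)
Your proposal is correct and follows the same route as the paper. Adjunction gives $\omega_M\cong\OO_M(k(n-d)-n-1)$, and the vanishing of the intermediate holomorphic forms comes from the Lefschetz/Koszul theory of smooth complete intersections, which the paper delegates to Hidalgo--Hughes--Leyton-\'Alvarez and to ``properties of the Hodge numbers of complete intersections''; your degree argument showing that $\OO_M(a)$ is nontrivial for $a\neq 0$ simply spells out the paper's ``direct computation'' for the converse.
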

\begin{proof}
    The "if" direction is \cite[Proposition~$2$, $(3)$]{HHAL}. For the converse, it follows from a direct computation using the adjunction formula and the properties of the Hodge numbers of complete intersections in the projective space.
\end{proof}

\begin{remark}\label{k_d_odd}
    Observe that with $t=n-d$ and the Calabi--Yau condition we can write $n=tk-1$ and $d=t(k-1)-1$, so that:
    \begin{itemize}
        \item [$(i)$] $d$ will be an even number if and only if $k$ is even and $t\geq1$ is an odd number.
        \item [$(ii)$] $d$ will be an odd number if and only if $k$ is an odd number or $t\geq1$ is an even number.
    \end{itemize}
\end{remark}

\section{Automorphisms with empty fixed locus}\label{auto_empty}

With the same notations as before, let $M$ be a GFM of type $(d;k,n)$ with the equations given in \eqref{equation_Fermat}. Assume also that $M$ is of Calabi--Yau type and recall that we put $H=(\IZ/k\IZ)^n$, recall also that $t=n-d\geq 1$. 

\begin{proposition}\label{def_automorphism_empty}
Let $\varphi \in \Aut(M)$ be induced by the following automorphism of $\PP^n$, 
which by simplicity we call in the same way:
\begin{align*}
\varphi(x_i) = \xi_k^{r}\, x_i, \quad \text{for } i\in\{rt+1,\ldots,(r+1)t\},
\end{align*}
where $\xi_k$ is a primitive $k$-th root of unity and $r=0,\ldots, k-1$. 
Then $\varphi$ has empty fixed locus and any other automorphism in 
$H\subset \Aut(M)$ with empty fixed locus is of this form (up to permutation 
of the coordinates).
\end{proposition}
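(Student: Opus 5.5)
The whole argument rests on \cref{fixed_points_1}, which converts the geometry into a counting problem. For $\varphi=\varphi_1^{m_1}\cdots\varphi_{n+1}^{m_{n+1}}\in H$, the fixed locus is the disjoint union of the $F_\ell(m_\varphi)$. Moreover, $F_\ell(m_\varphi)\neq\emptyset$ if and only if $\#L_\ell(m_\varphi)\geq n-d+1=t+1$. Hence $\varphi$ has empty fixed locus if and only if every fibre of the map $\{1,\ldots,n+1\}\to\ZZ/k\ZZ$, $j\mapsto m_j$, has at most $t$ elements.

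\emph{The given $\varphi$ has no fixed points.} We use the Calabi--Yau condition in the form of \cref{k_d_odd}, namely $n+1=tk$. For the $\varphi$ in the statement, the exponent vector is $m_\varphi=(0^{t},1^{t},\ldots,(k-1)^{t})$. Thus $\#L_\ell(m_\varphi)=t$ for every $\ell\in\ZZ/k\ZZ$. This is strictly less than $t+1$, so each $F_\ell(m_\varphi)$ is empty and $\varphi$ has empty fixed locus.

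\emph{Uniqueness.} Let $\psi\in H$ have empty fixed locus. Then $\#L_\ell(m_\psi)\leq t$ for all $\ell$. The sets $L_\ell(m_\psi)$ partition $\{1,\ldots,n+1\}$, which gives
\[
tk=n+1=\sum_{\ell\in\ZZ/k\ZZ}\#L_\ell(m_\psi)\leq kt .
\]
Equality must therefore hold, so $\#L_\ell(m_\psi)=t$ for every $\ell$. In words, each residue class modulo $k$ occurs exactly $t$ times among the exponents $m_j$. Permuting the coordinates so that the indices in $L_\ell$ become $\{\ell t+1,\ldots,(\ell+1)t\}$ brings $m_\psi$ to exactly the vector of $\varphi$.

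\emph{The representative of $\psi$.} The tuple $m_\psi$ is only defined up to adding a constant vector, because $\varphi_1\circ\cdots\circ\varphi_{n+1}=\Id$. Adding a constant only relabels the fibres by a translation of $\ZZ/k\ZZ$, so it does not affect the counting.

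\emph{Permuting coordinates.} A coordinate permutation does not preserve the equations \eqref{equation_Fermat} literally; it carries $M$ to another GFM of the same type (another parameter $\Lambda$). This is why the statement says ``up to permutation of the coordinates''.

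\emph{Order and choice of root.} Every exponent class occurs, so $\psi$ has order exactly $k$. The fixed primitive root $\xi_k$ can always be kept, since the classes $0,1,\ldots,k-1$ are all present.

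\emph{Main subtlety.} The only delicate point is this bookkeeping: the ambiguity of $m_\psi$ modulo constants and the meaning of ``up to permutation''. The counting step itself is immediate from \cref{fixed_points_1} together with $n+1=tk$.
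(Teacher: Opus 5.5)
Your proof is correct and follows the paper's argument: both reduce the question via \cref{fixed_points_1} to counting the fibres $L_\ell(m_\psi)$ and use $n+1=tk$ to force each fibre to have exactly $t$ elements. The only difference is phrasing, since the paper argues by contradiction (a fibre of size $<t$ forces one of size $>t$). Your extra remarks are correct bookkeeping that the paper leaves implicit: $m_\psi$ is ambiguous modulo constant vectors, and $\psi$ has order exactly $k$.
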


\begin{proof}
The definition of $\varphi$ shows $\#L_{\ell}(m_{\varphi})=t< n-d+1$, for each $\ell=0,\ldots, k-1$ and so by \cref{fixed_points_1}, we obtain that $\Fix_\varphi(M)=\emptyset$.

If $\psi$ is another automorphism with empty fixed locus such that there is an $l\in \{0,\dots,k-1\}$ with $\#L_l(m_\psi)<t$. Now, since $n+1=tk$ by the Calabi--Yau condition, there exists an $l'\in\{0,\ldots, k-1\}$ such that $\#L_{l'}(m_\psi)>t$. This contradicts the fact that $\psi$ has empty fixed locus (see \cref{fixed_points_1}).
\end{proof}
\begin{remark}\label{order}
Observe that the previous proposition tells us that if $M$ is GFM of Calabi--Yau type any automorphism in $H$ with empty fixed locus has order $k$.
\end{remark}
With the same notation as before, under the assumption of $\Fix_\varphi(M)=\emptyset$, we will now compute $\Fix_{\varphi^s}(M)$ for every $1<s<k$. We denote the order of $\varphi^s$ by $o(\varphi^s)$ and we have that 
\begin{align*}
    o\left(\varphi^s\right)=k/\gcd(s,k)\,, \quad \text{for every } 1<s<k\,.
\end{align*}

 We will show that in order to compute $\Fix_{\varphi^s}(M)$ for every $1<s<k$, it suffices to consider those values $s$ that are divisors of $k$. This claim is established in the following proposition and remark.

\begin{proposition}\label{fixed_points_automorphism}
 Let $M$ be a Calabi--Yau GFM of type $(d;k,n)$ and assume that $k$ is not a prime number. Let $\varphi\in H$ be an automorphism of $M$ with empty fixed locus and $S$ be the non-empty set:
 \begin{align*}
     S=\{s\in \ZZ\mid  1<s<k\,, \quad s \text{ divides }k \}\,.
 \end{align*}
 Then,
 \begin{align*}
     \Fix_{\varphi^s}(M)=\bigcup_{i=1}^{o\left(\varphi^s\right)}V_i^{(s)}, \quad \text{for every } s\in S
 \end{align*}
 and where the disjoint sets $V_i^{(s)}$ have dimension $t(s-1)-1$, for every $i\in\left\{1,\ldots,o(\varphi^s)\right\}$ and are also GFM of type $(t(s-1)-1;\,k,\,ts-1)$.
 \end{proposition}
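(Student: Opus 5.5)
The plan is to compute the exponent tuple $m_{\varphi^s}$ explicitly and read off the fixed locus from \cref{fixed_points_1}. By \cref{def_automorphism_empty} we may assume, up to permuting coordinates, that $\varphi$ has the block form given there: $\varphi$ multiplies the $t$ coordinates $x_{rt+1},\dots,x_{(r+1)t}$ by $\xi_k^r$, for $r=0,\dots,k-1$. Hence $m_\varphi$ takes each value $r\in\ZZ/k\ZZ$ on exactly one block of size $t$. The set $S$ is non-empty because $k$ is not prime.

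For $s\in S$, the automorphism $\varphi^s$ multiplies the $r$-th block by $\xi_k^{rs}$, so $m_{\varphi^s}$ equals $rs \bmod k$ on block $r$. Since $s\mid k$, the class $rs\bmod k$ depends only on $r \bmod k/s$. As $r$ runs over $0,\dots,k-1$, the values $rs$ therefore run over exactly the $k/s=o(\varphi^s)$ classes $\ell\in\{0,s,2s,\dots,k-s\}$. Each such class is attained by exactly $s$ blocks, namely those with $r\equiv r_0 \pmod{k/s}$. So for these $\ell$ we have $\#L_\ell(m_{\varphi^s})=ts$, and $L_\ell(m_{\varphi^s})=\emptyset$ for all other $\ell$.

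Now I apply \cref{fixed_points_1}. The fixed locus of $\varphi^s$ is the disjoint union of the $F_\ell(m_{\varphi^s})$. Since $s\ge 2$, we have $ts\ge t+1=n-d+1$, so each of the $o(\varphi^s)$ sets $V^{(s)}_i:=F_\ell(m_{\varphi^s})$ is non-empty. By part $(ii)$ of that proposition, each is a GFM of dimension
\[
ts+d-n-1=ts-t-1=t(s-1)-1 .
\]
To identify the type, I note that $\widetilde F_\ell(m_{\varphi^s})\cong\PP^{ts-1}$. Restricting the $t$ Fermat-type equations \eqref{equation_Fermat} of degree $k$ to this linear subspace gives a complete intersection of codimension $t$. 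This matches $(ts-1)-(t(s-1)-1)=t$, so the type is $(t(s-1)-1;\,k,\,ts-1)$. The remaining $(\ZZ/k\ZZ)^{ts-1}$-structure is induced by the diagonal generators acting on the surviving coordinates.

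The main obstacle is this last identification. One must check that the restricted equations are again of the generalized Fermat shape with parameters in general position, i.e.\ in $X_{ts-1,\,t(s-1)-1}$ after a linear change of coordinates. For this I would lean on the statement already contained in \cref{fixed_points_1}$(ii)$, that non-empty $F_\ell$ are GFM, together with the general-position argument used in the proof of \cref{fixed_points_2}. A minor point is the degenerate case $t=1$, $s=2$, where the dimension is $0$ and the $V^{(s)}_i$ are finite sets of $k$ points each (by \cref{fixed_points_2}) rather than connected varieties.
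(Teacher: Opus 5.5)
Your proposal is correct and follows essentially the paper's argument: compute $m_{\varphi^s}$ from the block form, observe that each of the $k/s=o(\varphi^s)$ occurring values $\ell\in\{0,s,\dots,k-s\}$ has $\#L_\ell(m_{\varphi^s})=ts\ge t+1$, and then read off non-emptiness, disjointness, dimension $t(s-1)-1$ and the GFM structure from \cref{fixed_points_1}. Your count is slightly more precise than the paper's, which asserts $\#L_\ell=st$ for every $\ell$ rather than only for multiples of $s$, and your caveat about the zero-dimensional case $t=1$, $s=2$ is a fair remark on the statement itself rather than a gap in your argument.
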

\begin{proof} Recall that by \cref{order} the automorphism $\varphi$ has order $k$. Let $s\in S$, then $\gcd(k,s)=s$ and so $o(\varphi^s)=k/s$. Observe now that 
     \begin{align*}
         \xi_{k}^{\left({a+j \frac{k}{s}}\right)s}=\xi_{k}^{as},\quad \text{for }j\in\{0,..,s-1\}
    \end{align*}
     where $\xi_k$ is a primitive $k$-root of unity. So that we get for each $\ell=0,\ldots, k-1$. 
    \begin{align}\label{L}
        \#L_\ell(m_{\varphi^s})=st\geq t+1\,,
    \end{align}
    recall that here $s>1$ and $t\geq 1$. Observe that since $k$ is not prime, and $s\geq 2$, the equality holds if and only if $t=1$ and $s=2$. By \cref{fixed_points_1}, $\Fix_{\varphi^s}(M)$ is non-empty and the fixed locus in $\PP^n$ is the disjoint union 
    \begin{align*}
        \Fix_{\varphi^{s}}\left(\PP^n\right)=\bigcup_{i=1}^{o\left(\varphi^s\right)}\PP_{i}^{st-1}.
    \end{align*}
    We set $V_i^{(s)}:=\PP_{i}^{st-1}\cap M$. Then
    \begin{align*}
         \operatorname{dim}V_i^{(s)}=st-1-t=t(s-1)-1\geq 0\,, \quad \text{for every }i\in\{1,\ldots,o\left(\varphi^s\right)\}\, .
    \end{align*}
    Therefore, for every $s\in S$, the set of fixed points in $M$ is given by
    \begin{align*}
        \Fix_{\varphi^s}(M)=\bigcup_{i=1}^{o\left(\varphi^s\right)}V_i^{(s)}
    \end{align*}
    where $\dim V_i^{(s)}= t(s-1)-1$ and $V_i^{(s)}\cap V_j^{(s)}=\emptyset$, for $i\not=j$ since the $\PP_i^{st-1}$ are disjoint. Moreover, by \cref{fixed_points_1} and \cref{fixed_points_2}, every $V_i^{(s)}$ is a generalized GFM of type $(t(s-1)-1;\,k,\,ts-1)$.
\end{proof}
\begin{remark}\label{fixed_points_prime}
    By \cref{fixed_points_automorphism}, $k$ is a prime number if and only if $\Fix_{\varphi^s}(M)=\emptyset$ for every $1\leq s\leq k-1$. Indeed, if $k$ is prime then $\gcd(s,k)=1$ for every $1\leq s\leq k-1$, and hence $o(\varphi^s)=k=o(\varphi)$ yields that $\Fix_{\varphi^s}(M)=\Fix_\varphi(M)=\emptyset$. Conversely, if $k$ is not prime then the set $S$ is non empty, and \cref{fixed_points_automorphism} gives $\Fix_{\varphi^s}(M)\neq\emptyset$ for any $s\in S$.

Assume now that $k$ is not prime, and let $r$ be an integer with $1<r<k$ that does not belong to $S$. Then, standard computation gives the following cases
\begin{itemize}
    \item [$1.$] $\gcd(k,r)=1$, then $\Fix_{\varphi^{r}}(M)$ is empty.
    \item [$2.$] If it exists $s\in S$ such that $\gcd(k,r)=\gcd(k,s)=s$, then
    $\Fix_{\varphi^{r}}(M)=\Fix_{\varphi^{s}}(M)$, which was computed in \cref{fixed_points_automorphism}.
\end{itemize}
Therefore, to compute $\Fix_{\varphi^s}(M)$ for every $1<s<k$, it is enough to consider $s\in S$.
\end{remark}

\begin{example}
    Let us consider $k=6$ and $t=1$, then with the Calabi--Yau condition of \cref{{condition_CY}} we have $n=5$ and $d=4$. The set of proper divisors of $k$ is $S=\{2,3\}$, and 
    \begin{align*}
        M=\left\{
        x_1^6+x_2^6+x_3^6+x_{4}^6+x_{5}^6+x_{6}^6=0
        \right\}\subset \PP^5\,.
    \end{align*}
    We consider the automorphism $\varphi$ defined in \cref{def_automorphism_empty}:
    \begin{align*}
        \varphi=\diag(1,\xi_{6},\xi_{6}^{2},\xi_{6}^{3},\xi_{6}^{4},\xi_{6}^{5})\, ,
    \end{align*}
    where $\xi_6$ is a primitive $6$-th root of unity. By \cref{fixed_points_1}, note that $\Fix_\varphi(M)=\emptyset$ since $\#L_\ell(m_\varphi)=1<t+1=n-d+1=2$ for each $\ell=0,\ldots, 5$.
   
    A direct computation gives
    \begin{itemize}
        \item[$*$] $\varphi^2=\diag(1,\xi_{3},\xi_{3}^{2},1,\xi_{3},\xi_{3}^2)$, where $\xi_3=\xi_6^2$ is a primitive $3$-rd root of unity, with $o(\varphi^2)=3$ and $\#L_\ell(m_{\varphi^2})=2$\,  for $\ell=0,1,2\,;$
        \item[$*$] $\varphi^3=\diag(1,-1,1,-1,1,-1)$, with $o(\varphi^3)=2$ and $\#L_\ell(m_{\varphi^3})=3$\, for $\ell=0,1\,;$
        \item[$*$] $\varphi^4=\diag(1,\xi_{3}^{2},\xi_{3},1,\xi_{3}^{2},\xi_{3})$, with $o(\varphi^4)=3$ and $\#L_\ell(m_{\varphi^4})=2$\,  for $\ell=0,1,2\,;$
        \item[$*$] $\varphi^5=\diag(1,\xi_{6}^5,\xi_{6}^{4},\xi_{6}^{3},\xi_{6}^{2},\xi_{6})$, with $o(\varphi^5)=6$ and $\#L_\ell(m_{\varphi^5})=1$\, for $\ell=0,1,2,3,4,5\,.$
    \end{itemize}
    By \cref{fixed_points_prime}, in order to describe $\Fix_{\varphi^s}(M)$ for $1<s<6$ it is enough to consider $s\in S=\{2,3\}$. Indeed, $\gcd(6,4)=2$ gives $\Fix_{\varphi^4}(M)=\Fix_{\varphi^2}(M)$, and $\gcd(6,5)=1$ gives $\Fix_{\varphi^5}(M)=\Fix_\varphi(M)=\emptyset$. Moreover, since $\#L_{\ell}(m_{\varphi^s})=st\geq t+1$ for $s=2,3$ and any $\ell$, \cref{fixed_points_1} gives $\Fix_{\varphi^s}(M)\neq\emptyset$ in both cases. We now compute these two fixed loci using \cref{fixed_points_automorphism}.
    \begin{itemize}
        \item[$(i)$] $\Fix_{\varphi^2}(M)$: Note that $s=2$ and $o(\varphi^2)=3$. Since $\#L_{\ell}(m_{\varphi^2})=2=st$ for $\ell=0,1,2$, the fixed locus in $\PP^5$ is the disjoint union of $o(\varphi^2)=3$ projective lines, that we denote $\PP^1_j$, with $j=1,2,3$ so that 
              \begin{align*}
                  \Fix_{\varphi^2}(\PP^{5})=\PP_{1}^{1}\cup \PP_{2}^{1}\cup \PP_{3}^{1}=\bigcup_{i=1}^{o\left(\varphi^2\right)}\PP_{i}^{st-1}
              \end{align*}
    where $\PP_{1}^1=\left\{x_i=0\mid i\in I_1:=\{2,3,5,6\}\right\}$, $\PP_{2}^1=\left\{x_i=0\mid i\in I_2:=\{1,3,4,6\}\right\}$ and $\PP_{3}^1=\left\{x_i=0\mid i\in I_3:=\{1,2,4,5\}\right\}$. 
    Setting $V_j^{(s)}:=\PP_j^1\cap M$, each $V_j^{(s)}$ has dimension $t(s-1)-1=0$, so
        \begin{align*}
            \Fix_{\varphi^2}(M)=V_1^{(2)}\cup V_2^{(2)}\cup V_3^{(2)}=\bigcup_{i=1}^{o\left(\varphi^2\right)} V_i^{(2)}
        \end{align*}
         with the $V_j^{(2)}$ pairwise disjoint. Geometrically, each $V_j^{(2)}$ consists of the $6$ solutions of $x_a^6+x_b^6=0$ on the line $\PP_j^1$ with the indices $a,b\notin I_j$ and $a\neq b$, giving $18$ isolated fixed points in total, distributed $6$ on each of the three lines $\PP^1_j$.

        \item[$(ii)$] $\Fix_{\varphi^3}(M)$: Note that $s=3$ and $o(\varphi^s)=2$. Since $\#L_{\ell}(m_{\varphi^3})=3=st$ for $\ell=0,1$, the fixed locus in $\PP^5$ is the disjoint union of $o(\varphi^3)=2$ projective planes,
              \begin{align*}
                  \Fix_{\varphi^3}(\PP^{5})=\PP_{1}^{2}\cup \PP_{2}^{2}=\bigcup_{i=1}^{o\left(\varphi^3\right)}\PP_{i}^{st-1}
              \end{align*}
              where 
    $\PP_{1}^2=\left\{x_i=0\mid i\in\{2,4,6\}\right\}$ and $\PP_{2}^2=\left\{x_i=0\mid i\in\{1,3,5\}\right\}$. As above, setting $V_j^{(s)}:=\PP_j^2\cap M$, each $V_j^{(s)}$ has dimension $t(s-1)-1=1$, and
         \begin{align*}
             \Fix_{\varphi^3}(M)=V_1^{(3)}\cup V_2^{(3)}=\bigcup_{i=1}^{o\left(\varphi^3\right)} V_i^{(3)}\,,
         \end{align*}
         with $V_1^{(3)}\cap V_2^{(3)}=\emptyset$. Geometrically, $V_1^{(3)}$ and $V_2^{(3)}$ are two disjoint plane Fermat sextics.
    \end{itemize}
\end{example}

\begin{remark}\label{def_normal_variety}
   Assume that $k\geq 4$ is not a prime number and let $s$ be a divisor of $k$ (which means that $s\in S$). By \cref{fixed_points_automorphism}, we obtain that $v_s=\dim(\Fix_{\varphi^s}(M))=t(s-1)-1$, for every $s\in S$. Then
\begin{align*}
     k-s&\geq k/2\\
     t(k-1)-t(s-1)=t(k-s)&\geq k/2,\quad \text{since }t\geq1\\
     d-v_s=[t(k-1)-1]-[t(s-1)-1]&\geq k/2\geq 2, \quad \text{since }k\geq4.
\end{align*}
Thus $\dim\left(\Fix_{\varphi^s}(M)\right)\leq \dim M-2$, for every $s\in S$.
\end{remark}
\begin{proposition}\label{quasi_etale_morphism}
    Let $M$ be a Calabi--Yau GFM of type $(d; k,n)$. Assume that $\varphi\in H$ is an automorphism of $M$ with empty fixed locus, then $\pi:M\to M/\langle\varphi\rangle$
    \begin{itemize}
        \item [$(i)$] is an \'etale morphism if $k$ is a prime number.
        \item [$(ii)$] is a quasi-\'etale morphism if $k$ is not a prime number.
    \end{itemize}
\end{proposition}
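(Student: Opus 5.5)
The plan is to use the standard fact that the quotient map $\pi\colon M\to M/\langle\varphi\rangle$ by a finite cyclic group $G=\langle\varphi\rangle$ is étale exactly at points with trivial stabilizer. A point $x\in M$ has nontrivial stabilizer if and only if $x\in\Fix_{\varphi^s}(M)$ for some $1\leq s\leq k-1$. The branch locus of $\pi$ is therefore the image of
\begin{align*}
B:=\bigcup_{s=1}^{k-1}\Fix_{\varphi^s}(M).
\end{align*}
So the whole statement reduces to controlling $B$. By \cref{order}, $\varphi$ has order $k$, so $s$ ranges over $1,\ldots,k-1$.

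For $(i)$, assume $k$ is prime. \cref{fixed_points_prime} gives $\Fix_{\varphi^s}(M)=\emptyset$ for every $1\leq s\leq k-1$. Hence $B=\emptyset$ and $G$ acts freely on the smooth projective variety $M$. The quotient of a smooth variety by a free action of a finite group is smooth, and the quotient map is a finite étale (Galois) cover. I would simply cite this standard fact.

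For $(ii)$, assume $k$ is not prime. By \cref{fixed_points_prime}, each nonempty $\Fix_{\varphi^r}(M)$ with $1<r<k$ equals $\Fix_{\varphi^s}(M)$ for some $s\in S$ (namely $s=\gcd(k,r)$). Also $\Fix_{\varphi}(M)=\emptyset$, and the powers coprime to $k$ have empty fixed locus. Hence $B=\bigcup_{s\in S}\Fix_{\varphi^s}(M)$, a finite union of closed subsets. Since $k$ is not prime, $k\geq 4$, so \cref{def_normal_variety} applies and gives $\dim\Fix_{\varphi^s}(M)=t(s-1)-1\leq d-2$ for every $s\in S$. Therefore $B$ is closed of codimension at least two in $M$, and $\pi$ restricted to $M\setminus B$ is étale, with $M\setminus B$ being $G$-invariant.

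It remains to match the paper's definition of quasi-étale: we need a closed subset of codimension at least two outside of which $\pi$ is étale. I would take $Z=B$ on $M$ and, equivalently, $\pi(B)$ on the quotient, which has the same dimension because $\pi$ is finite. The only point needing care, the main obstacle such as it is, is confirming that the fixed loci of all powers are already covered by $s\in S$. That is exactly the reduction in \cref{fixed_points_prime}, so the proof is essentially a bookkeeping of these earlier results. I would also note that $\pi$ is genuinely not étale in case $(ii)$, since $B\neq\emptyset$, although the statement does not require this.
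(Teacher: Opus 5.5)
Your proof is correct and follows the paper's own argument. For $k$ prime, all nontrivial powers of $\varphi$ act freely by \cref{fixed_points_prime}, so $\pi$ is \'etale; for $k$ not prime, the ramification locus is $\bigcup_{s\in S}\Fix_{\varphi^s}(M)$, which has codimension at least two by \cref{def_normal_variety}. Your explicit reduction of every power $\varphi^r$ to a divisor $s\in S$, and your statement of the prime case for all $1\leq s\leq k-1$ rather than for $s\in S$ (which is empty when $k$ is prime), only spell out what the paper's two-line proof leaves implicit.
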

\begin{proof}  Recall that by \cref{order} the automorphism $\varphi$ has order $k$. If we assume that $k$ is prime, then we have $\Fix_{\varphi^s}(M)$ is empty for every $s\in S$ (see \cref{fixed_points_prime}). Hence, $\pi:M\to M/\langle\varphi\rangle$ is an étale morphism proving $(i)$.

    On the other hand, if $k$ is not a prime number then we obtain that $\codim(\Fix_{\varphi^s}(M))\geq 2$ for every $s\in S$ (see \cref{def_normal_variety}), then $\pi:M\to M/\langle\varphi\rangle$ is a quasi-\'etale morphism.
\end{proof}

\begin{proposition}\label{quotients_log-enriques_CY}
    Let $M$ be a Calabi--Yau GFM of type $(d;k,n)$. Let $\varphi\in H$ be an automorphism of $M$ with empty fixed locus. Then, the following statements hold
    \begin{itemize}
        \item[$(i)$] If $d$ is an even number, then $M/\langle\varphi\rangle$ is a Log Enriques variety of CY type and index two, which is smooth if and only if $k=2$.
        \item[$(ii)$] If $d$ is an odd number, then $M/\langle\varphi\rangle$ is a CY variety, which is smooth if and only if $k$ is a prime number.
    \end{itemize}
\end{proposition}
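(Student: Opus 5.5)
The plan is to combine the quasi-étaleness of the quotient map with the general criterion for quotients of CY manifolds, and then treat smoothness as a separate question about the fixed loci of the powers of $\varphi$.

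\textbf{Step 1: the quotient is of the right type.} By \cref{order}, $\varphi$ has order $k$. By \cref{quasi_etale_morphism}, the quotient map $\pi\colon M\to M/\langle\varphi\rangle$ is quasi-étale in all cases, and étale when $k$ is prime. By \cref{condition_CY}, $M$ is a CY manifold of dimension $d$. Hence \cref{log_enriques_CY_type} applies directly:
\begin{itemize}
\item if $d$ is even, $M/\langle\varphi\rangle$ is a Log Enriques variety of CY type and index two;
\item if $d$ is odd, $M/\langle\varphi\rangle$ is a CY variety.
\end{itemize}
In both cases the singularities are klt, hence rational.

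\textbf{Step 2: smoothness in case $(ii)$.} We treat $d$ odd first. If $k$ is prime, \cref{fixed_points_prime} shows that $\Fix_{\varphi^s}(M)=\emptyset$ for all $1\leq s\leq k-1$. The group then acts freely, so the quotient is smooth.

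Conversely, suppose $k$ is not prime. Take $s\in S$. By \cref{fixed_points_automorphism}, $\varphi^s$ has a nonempty fixed locus of codimension $\geq 2$ (see \cref{def_normal_variety}), so the stabilizer of a point $x$ in it is a nontrivial cyclic subgroup. A nontrivial finite group fixing $x$ with fixed locus of codimension $\geq 2$ contains no pseudo-reflections. By the Chevalley--Shephard--Todd theorem (equivalently, Zariski--Nagata purity as in \cref{rational_klt}$(ii)$), the image of $x$ is therefore a singular point of the quotient.

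\textbf{Step 3: smoothness in case $(i)$.} Now let $d$ be even. By \cref{k_d_odd}, $k$ is even. If $k=2$, then $\varphi$ is a fixed-point-free involution, so $\pi$ is étale and the quotient is smooth. If $k\neq 2$, then $k$ is even and at least $4$, hence not prime, and Step 2 gives singular points. In fact smoothness also follows from \cref{Enriques_manifold_index2}: a smooth quotient would be an Enriques manifold of CY type, which forces $k=2$.

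\textbf{Main obstacle.} The only delicate point is the claim that a fixed point of a nontrivial stabilizer with codimension $\geq 2$ fixed locus gives a genuinely singular image. I would justify it by the purity argument: near such a point the quotient map is quasi-étale but not étale, and a smooth target would force it to be étale by Zariski--Nagata purity.
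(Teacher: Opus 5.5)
Your proposal is correct and follows essentially the same route as the paper: quasi-étaleness of $\pi$ (\cref{quasi_etale_morphism}) together with \cref{log_enriques_CY_type} gives the type of the quotient, and smoothness is reduced to ``$\pi$ étale $\iff$ $k$ prime'' via \cref{fixed_points_prime} and Zariski--Nagata purity, with \cref{k_d_odd} turning ``$k$ prime'' into ``$k=2$'' when $d$ is even. Your extra pseudo-reflection/Chevalley--Shephard--Todd remark only spells out the purity step that the paper cites through \cref{rational_klt}~$(ii)$.
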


\begin{proof}  Recall that by \cref{order} the automorphism $\varphi$ has order $k$. By \cref{quasi_etale_morphism}, the quotient map $\pi\colon M\to M/\langle\varphi\rangle$ is quasi-étale, so \cref{log_enriques_CY_type} applies. If $d$ is even, part $(ii)$ gives that $M/\langle\varphi\rangle$ is a Log Enriques variety of CY type and index two, and if $d$ is odd, part $(iii)$ gives that it is a CY variety.

For smoothness, by \cref{rational_klt}~$(ii)$ the quotient $M/\langle\varphi\rangle$ is smooth if and only if $\pi$ is \'etale, and by \cref{fixed_points_prime} this happens if and only if $k$ is prime. This equivalence holds for any $d$. In particular, if $d$ is odd, then $M/\langle\varphi\rangle$ is a CY manifold if and only if $k$ is prime, which proves $(ii)$. If $d$ is even, then $k$ is even by \cref{k_d_odd}~$(i)$ (see also \cref{log_enriques_CY_type}), so $k$ is prime if and only if $k=2$. This completes the proof of $(i)$, which is consistent with \cref{Enriques_manifold_index2} and \cref{Enriques_manifold} below.
\end{proof}

\begin{example}
  Let  $M$ be a Calabi--Yau GFM of type $(d;3,n)$. By \cref{k_d_odd}, we obtain that $n=3t-1$ and $d=2t-1$ for some $t\geq 1$, and in particular $d$ is an odd number. Let $\varphi$ be an automorphism of $M$ with empty fixed locus.  Recall that it is defined as:
    \begin{align*}
    \varphi([x_1:\dots:x_{3t}])=\left[x_1:\dots:x_{t}:\xi_{3}\hspace{0.25mm} x_{t+1}:\dots: \xi_{3}\hspace{0.25mm}x_{2t}:\xi_{3}^2\hspace{0.25mm} x_{2t+1}:\dots:\xi_{3}^2\hspace{0.25mm}x_{3t}\right]\, .
    \end{align*}
    By \cref{fixed_points_prime}, we have that $\Fix_{\varphi^2}(M)$ is empty too, so that by \cref{quotients_log-enriques_CY} we have  that $M/\langle\varphi\rangle$ is a CY manifold.
\end{example}
\begin{example}\label{Enriques_manifold}
   Let $M$ be a Calabi--Yau GFM of type $(d;2,n)$ of even dimension $d$. As before, we have that
    \begin{align*}
        n=2t-1\quad\text{and}\quad d=t-1, \ \ \text{for some odd}\,\,\, t\geq3, 
    \end{align*}
    and $M$ is given as follows
    \begin{align*}
         M=\left\{
    \begin{array}{rrrrrrrrc}
         x_1^2&+\dots+ &x_{t-1}^2 &+&x_{t}^2&+&x_{t+1}^2&=&0  \\
        \lambda_{1,1} x_1^2&+\dots+ &\lambda_{1,t-1}x_{t-1}^2 &+&x_{t}^2&+&x_{t+2}^2&=&0\\
         \vdots\hspace{0.2cm}&       & \vdots  \hspace{0.25cm}& &      \vdots\hspace{0.2cm}& &\vdots\hspace{0.2cm}  & &\vdots\\
         \lambda_{t-1,1}x_1^2&+\dots+ &\lambda_{t-1,t-1}x_{t-1}^2&+&x_{t}^2&+&x_{2t}^2&=&0
    \end{array}
    \right\} \,.
    \end{align*}
    The automorphism $\varphi$ of $M$ with empty fixed locus is
    \begin{align*}
        \varphi([x_1:\dots:x_{t}:x_{t+1}:\dots: x_{2t}])=[x_1:\dots:x_{t}:-\ x_{t+1}:\dots: -\ x_{2t}]
    \end{align*}
    which is unique up to permutation of the variables (see \cref{def_automorphism_empty}). 
    
   Then, it follows from \cref{quotients_log-enriques_CY} that $M/\langle\varphi\rangle$ is an Enriques manifold of CY type and index two with an \'etale morphism $\pi:M\to M/\langle\varphi\rangle$. 
\end{example}

\begin{remark}\label{subfamily_E}
    \cref{Enriques_manifold} is the unique example of an Enriques manifold that one can obtain from a Calabi--Yau GFM $M$ and an automorphism $\varphi\in\Aut(M)$ such that $\Fix_{\varphi^s}(M)$ is empty for every $s\in S$, (see also \cref{Enriques_manifold_index2} for the fact that the order of $\varphi$ must be exactly equal to $2$). This family of Enriques manifolds is a subfamily of the family obtained in \cite[$6.1.5$]{LogEnriques}.
\end{remark}

\section{Classification of Quotient Singularities}\label{sing}
 In this section, we will classify the quotient singularities of $M/\langle \varphi\rangle$, where $M$ is a Calabi--Yau GFM of type $(d;k,n)$ and $\varphi\in H$ is an automorphism of $M$ with empty fixed locus, recall that by \cref{order} this implies that the automorphism $\varphi$ has order $k$.
 
 Note that in \cref{fixed_points_prime}, we have shown that if $k\geq2$ is a prime number then $\Fix_{\varphi^s}(M)$ is empty, for every $0<s<k$. Here we will classify the type of the quotient singularities $M/\langle \varphi\rangle$, for every $k\geq 4$ that is a not a prime number.
 
Assume that $k\geq4$ is a not a prime number and recall the set of divisors of $k$ : 
\begin{align*}
    S=\{s\in\ZZ\mid  1<s<k \ \ \text{and}\ \ s\text{ divides }k \}\, .
\end{align*}

\begin{lemma}\label{fixed_points_in_others_divisors}
    Let $s,r\in \ZZ$ be divisors of $k$ and $1<s<r<k$. 
    \begin{itemize}
        \item [$(i)$]If $\gcd(s,r)\not=1$ then $\Fix_{\varphi^{\gcd(s,r)}}(M)\subseteq  \Fix_{\varphi^{s}}(M)\cap \Fix_{\varphi^{r}}(M)\, .$
        \item [$(ii)$]If $\gcd(s,r) = 1$, then $\mathrm{Fix}_{\varphi^s}(M) \cap \mathrm{Fix}_{\varphi^r}(M) = \emptyset\,.$
    \end{itemize}
\end{lemma}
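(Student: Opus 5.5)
Both parts follow from elementary group theory in the cyclic group $\langle\varphi\rangle$, together with the hypothesis that $\varphi$ has empty fixed locus. I will not need the explicit description of the fixed loci from \cref{fixed_points_automorphism}, nor the block structure of $\varphi$ from \cref{def_automorphism_empty}. The key observation is that if an integer $g$ divides $m$, then $\varphi^m=(\varphi^g)^{m/g}$, so every point fixed by $\varphi^g$ is also fixed by $\varphi^m$.

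For $(i)$, set $g:=\gcd(s,r)>1$. Since $g$ divides both $s$ and $r$, we can write $\varphi^s=(\varphi^g)^{s/g}$ and $\varphi^r=(\varphi^g)^{r/g}$. If $x\in\Fix_{\varphi^g}(M)$, then $\varphi^g(x)=x$, and iterating this gives $\varphi^s(x)=x$ and $\varphi^r(x)=x$. Hence $\Fix_{\varphi^{g}}(M)\subseteq \Fix_{\varphi^{s}}(M)\cap \Fix_{\varphi^{r}}(M)$. I would also note the degenerate case $g=s$ (when $s\mid r$), where the statement reduces to $\Fix_{\varphi^s}(M)\subseteq\Fix_{\varphi^r}(M)$; this is still covered by the same argument.

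For $(ii)$, suppose $\gcd(s,r)=1$ and that there is a point $x\in\Fix_{\varphi^s}(M)\cap\Fix_{\varphi^r}(M)$. By B\'ezout's identity there are integers $a,b$ with $as+br=1$. Since $\varphi$ has finite order $k$ (\cref{order}), negative exponents make sense and $x$ is fixed by every element of the subgroup $\langle\varphi^s,\varphi^r\rangle$. In particular $x$ is fixed by $\varphi^{as+br}=\varphi$. This contradicts $\Fix_\varphi(M)=\emptyset$, so the intersection is empty.

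I do not expect a real obstacle here. The only points needing care are justifying negative powers, which is handled by the finite order of $\varphi$, and remembering that the definition of empty fixed locus refers to $\varphi$ itself. As a cross-check, one can also see $(ii)$ from the explicit description of the fixed loci in \cref{fixed_points_automorphism}. A fixed point of $\varphi^s$ is supported on blocks of coordinates whose indices lie in a single residue class modulo $k/s$, and similarly for $r$ modulo $k/r$. Because $\gcd(s,r)=1$ and $s,r\mid k$, we have $\lcm(k/s,k/r)=k$, so the support would be a single block of $t$ coordinates. That would force the point into $\Fix_\varphi(\PP^n)\cap M$, which is empty by \cref{fixed_points_1} since $t<n-d+1$.
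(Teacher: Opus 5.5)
Your proposal is correct and is essentially the paper's proof: part $(i)$ writes $\varphi^s=(\varphi^{g})^{s/g}$ and $\varphi^r=(\varphi^{g})^{r/g}$, and part $(ii)$ uses B\'ezout's identity $as+br=1$ to force any common fixed point to be fixed by $\varphi$, contradicting $\Fix_\varphi(M)=\emptyset$. Your coordinate cross-check via residue classes modulo $k/s$ and $k/r$ (using $\lcm(k/s,k/r)=k$) is also correct, but it is not needed for the argument.
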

\begin{proof}
     To prove $(i)$, set $\ell=\gcd(s,r)$. Then 
     \begin{align*}
         \ell\cdot s_1=s\quad \text{and}\quad \ell\cdot r_1=r,\quad \text{for some }s_1,r_1\ge1\, .
    \end{align*}
    Now, assume that $\ell\geq 2$ and that there exists $q\in \Fix_{\varphi^\ell}(M)$. Then, we have
    \begin{align*}
        \varphi^s(q)=\varphi^{\ell\cdot s_1}(q)=\underbrace{(\varphi^\ell\circ\cdots\circ\varphi^\ell)}_{s_1\text{-times}}(q)=q,
    \end{align*}
    and similarly we obtain that $\varphi^{r}(q)=q$. This proves $(i)$.
     
     Finally, assume that $\gcd(s,r)=1$. Then, there exist $a,b\in \ZZ$ such that $as + br = 1$. If $q \in \mathrm{Fix}_{\varphi^s}(M) \cap \mathrm{Fix}_{\varphi^r}(M)$, we have 
     \begin{align*}
         \varphi(q) = \varphi^{as + br}(q) = \varphi^{a\cdot s}(\varphi^{b\cdot r}(q)) = q\,,
     \end{align*}
     contradicting that $\mathrm{Fix}_{\varphi}(M) = \emptyset$, and this proves $(ii)$.
\end{proof}

\begin{definition}\label{def:stabilizer}
Let $q \in M$. Then the {\it stabilizer} of $q$ under the action of $\langle\varphi\rangle$ is defined as 
\begin{align*}
    \Stab(q) := \left\{\varphi^s\in \left\langle\varphi\right\rangle \mid \varphi^s(q) = q,\,\, s\in \IZ \,\right\}\,,
\end{align*}
which is a cyclic subgroup of $\left\langle\varphi\right\rangle$. For $q\in M$ with non-trivial stabilizer, we define the \textit{index of $q$} as
\begin{align*}
    s(q) := \min\left\{s \in S \mid q \in \Fix_{\varphi^s}(M)\right\}\,.
\end{align*}
\end{definition}

\begin{remark}\label{stab_relation}
    Observe that given $q\in M$, we have by \cref{def:stabilizer} that
    \begin{itemize}
        \item [$(i)$] $\Stab(q)$ is trivial if and only if $q\not\in \Fix_{\varphi^s}(M)$ for every $1\leq s\leq k-1$, in particular for every $s\in S$. 
        \item [$(ii)$] If $k$ is prime, then $\Stab(q)$ is trivial by the previous statement $(i)$ and \cref{fixed_points_prime}.
    \end{itemize}
\end{remark}

\begin{lemma}\label{stab}
Let $q \in \Fix_{\varphi^s}(M)$ where $s$ is the index of $q$, then
$\Stab(q) = \langle \varphi^s \rangle$. In particular the quotient
$M/\langle \varphi \rangle$ is locally isomorphic near $\pi(q)$ to $\AF^d / \langle \varphi^s
\rangle$ at the origin, where $\pi : M \to M/\langle \varphi \rangle$ is the
quotient map.
\end{lemma}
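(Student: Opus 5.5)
The plan is to first identify $\Stab(q)$ as a subgroup of the cyclic group $\langle\varphi\rangle$ of order $k$ (recall \cref{order}). Every subgroup of a cyclic group of order $k$ is of the form $\langle\varphi^{g}\rangle$ for a unique divisor $g$ of $k$. So we can write $\Stab(q)=\langle\varphi^{g}\rangle$ with $g\mid k$.

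Since $\Fix_\varphi(M)=\emptyset$, we have $g\neq 1$. Since $q\in\Fix_{\varphi^s}(M)$ is assumed, the stabilizer is nontrivial, so $g\neq k$. Hence $g\in S$. By definition of the index, $s(q)=s$ is the minimal element of $S$ with $\varphi^s(q)=q$, and $\varphi^g(q)=q$, so $s\le g$.

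For the reverse inequality we use $\varphi^s\in\Stab(q)=\langle\varphi^g\rangle$, which gives $g\mid s$ and hence $g\le s$. Therefore $g=s$ and $\Stab(q)=\langle\varphi^s\rangle$. An equivalent way to phrase this step: if $q$ is fixed by $\varphi^{a}$ and $\varphi^{b}$, then it is fixed by $\varphi^{\gcd(a,b,k)}$, via Bézout as in \cref{fixed_points_in_others_divisors}.

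For the local statement, the plan is to apply the standard slice/Cartan linearization argument for finite group actions.

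1. The orbit of $q$ consists of $k/s$ distinct points. Choose a small $\Stab(q)$-invariant open neighbourhood $U$ of $q$ whose translates $\varphi^j(U)$, $0\le j<k/s$, are pairwise disjoint. This is possible because $M$ is Hausdorff and the group is finite: take $U$ as the intersection of translates of a small neighbourhood separating the orbit points.
2. The map $\pi$ then identifies $U/\Stab(q)$ with an open neighbourhood of $\pi(q)$ in $M/\langle\varphi\rangle$.
3. Since $M$ is smooth of dimension $d$ and $\Stab(q)$ is a finite cyclic group fixing $q$, Cartan's lemma gives local holomorphic coordinates at $q$ in which $\varphi^s$ acts linearly, namely by its tangent representation $d\varphi^s_q$ on $T_qM\cong\AF^d$. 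This yields the local isomorphism with $\AF^d/\langle\varphi^s\rangle$ at the origin, in the analytic category. In the algebraic (étale-local) category, one can instead use Luna's étale slice theorem, or simply note that the diagonal action on the affine chart of $\PP^n$ restricts nicely.

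The only delicate point is step 3, the linearization. Since everything is finite and $M$ is smooth, this is standard, so I expect no real obstacle: the group-theoretic identification of $\Stab(q)$ is immediate once cyclic subgroup structure is used.
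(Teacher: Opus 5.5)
Your proof is correct and is essentially the paper's argument. The paper takes an arbitrary stabilizing power $\varphi^j$, passes to $\varphi^{\gcd(j,k)}$, and uses \cref{fixed_points_in_others_divisors}~$(ii)$ (Bézout) together with minimality of $s$ to get $\varphi^j\in\langle\varphi^s\rangle$; your argument via $\Stab(q)=\langle\varphi^g\rangle$ with $g\mid k$, squeezing $s\le g$ and $g\mid s$, is a more compact version of the same reasoning. Your local statement is, like the paper's (which cites Koll\'ar, Section~3.17), the standard linearization of the stabilizer action on $T_qM$.
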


\begin{proof}
The inclusion $\langle \varphi^s \rangle \subseteq \Stab(q)$ follows from the definition of $\Fix_{\varphi^s}(M)$. For the other inclusion, let $\varphi^j(q)=q$ for any $1 \leq j \leq
k - 1$, and set $\ell := \gcd(j, k)$. Note that $\ell\geq 2$, otherwise if $\ell = 1$ then \cref{fixed_points_prime} gives $\Fix_{\varphi^j}(M) = \Fix_{\varphi}(M) = \emptyset$, contradicting $\varphi^j(q) = q$. 
Hence $\ell \in S$, and again by \cref{fixed_points_prime}, we obtain that $\Fix_{\varphi^j}(M) = \Fix_{\varphi^{\ell}}(M)\neq\emptyset$. In particular
$q \in \Fix_{\varphi^{\ell}}(M)$ with $\ell \in S$.

Now, we show that $s \mid \ell$. Since $q \in \Fix_{\varphi^s}(M) \cap
\Fix_{\varphi^{\ell}}(M)$ with $s, \ell \in S$, if $\gcd(s, \ell) = 1$ then
\cref{fixed_points_in_others_divisors},~$(ii)$ gives $\Fix_{\varphi^s}(M) \cap
\Fix_{\varphi^{\ell}}(M) = \emptyset$, contradicting $q$ lying in both. Hence
$\gcd(s, \ell) \neq 1$, and in particular $\gcd(s, \ell) \in S$ divides both
$s$ and $\ell$. By minimality of $s$, we have $s \leq \gcd(s, \ell)$ and since $\gcd(s, \ell)$ divides $s$,
we obtain $\gcd(s, \ell) = s$, that is, $s \mid \ell$.

Therefore $s \mid \ell = \gcd(j, k)$ and $\ell \mid j$, which means $\varphi^j \in \langle
\varphi^s \rangle$. This shows $\Stab(q) \subseteq \langle \varphi^s \rangle$,
completing the proof of the first claim.

Finally, we know that $M$ is smooth at $q$ and $\Stab(q) = \langle
\varphi^s \rangle$ acts linearly on the tangent space $T_q M \cong \AF^d$,
by \cite[Section~$3.17$ and Claim~$3.17.1$]{Kollar_Sing} we have the isomorphism
\begin{align*}
    \left(M/\langle \varphi \rangle,\, \pi(q)\right)
    \;\cong\;
    \left(\AF^d/\langle \varphi^s \rangle,\, 0\right)\,. 
\end{align*}
\end{proof}

\begin{lemma}\label{stab_subgroup}
Let $s\in S$ and let $q\in \Fix_{\varphi^{s(q)}}(M)$, where $s(q)$ is the index of $q$. Then
\begin{align*}
    \Stab_{\langle\varphi^s\rangle}(q)
        =\left\langle\varphi^{\lcm(s,s(q))}\right\rangle .
\end{align*}
Moreover $\lcm(s,s(q))$ divides $k$, so either $\lcm(s,s(q))\in S$ or $\lcm(s,s(q))=k$. If $s\in S$, then
$q\in\Fix_{\varphi^{\lcm(s,s(q))}}(M)$ and $M/\left\langle\varphi^s\right\rangle$ is locally isomorphic near $\pi(q)$ to
$\AF^d/\left\langle\varphi^{\lcm(s,s(q))}\right\rangle$ at the origin. If $\lcm(s,s(q))=k$, the stabilizer
$\Stab_{\langle\varphi^s\rangle}(q)$ is trivial and the image of $q$ is a smooth point
of $M/\langle\varphi^s\rangle$.
\end{lemma}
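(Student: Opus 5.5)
The plan is to reduce everything to \cref{stab}, which gives $\Stab(q)=\langle\varphi^{s(q)}\rangle$ inside $\langle\varphi\rangle$. Since $\langle\varphi^s\rangle$ is a subgroup of $\langle\varphi\rangle$, we have
\begin{align*}
\Stab_{\langle\varphi^s\rangle}(q)=\Stab(q)\cap\langle\varphi^s\rangle=\langle\varphi^{s(q)}\rangle\cap\langle\varphi^{s}\rangle .
\end{align*}
The first step is the elementary fact about the cyclic group $\langle\varphi\rangle\cong\ZZ/k\ZZ$: for divisors $a,b$ of $k$, the subgroups $\langle\varphi^a\rangle$ and $\langle\varphi^b\rangle$ consist of the elements $\varphi^j$ with $a\mid j$, respectively $b\mid j$. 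This description holds because $a,b\mid k$, so that divisibility of exponents is well defined modulo $k$. Their intersection is therefore the set of $\varphi^j$ with $\lcm(a,b)\mid j$, which is $\langle\varphi^{\lcm(a,b)}\rangle$. Applying this with $a=s$ and $b=s(q)$, both of which lie in $S$, gives the displayed formula. Since $s$ and $s(q)$ both divide $k$, so does $\lcm(s,s(q))$. Hence either $1<\lcm(s,s(q))<k$, in which case $\lcm(s,s(q))\in S$, or $\lcm(s,s(q))=k$. Note that $\lcm(s,s(q))>1$ always, since $s>1$.

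In the case $\lcm(s,s(q))\in S$, the membership $q\in\Fix_{\varphi^{\lcm(s,s(q))}}(M)$ follows exactly as in \cref{fixed_points_in_others_divisors}$(i)$. Indeed, $\varphi^{s(q)}(q)=q$, and $\varphi^{\lcm}$ is an iterate of $\varphi^{s(q)}$, so it also fixes $q$. For the local description, I would repeat the final step of the proof of \cref{stab}. The point $q$ is a smooth point of $M$, and the finite group $\langle\varphi^s\rangle$ acts on $M$ with stabilizer $\langle\varphi^{\lcm(s,s(q))}\rangle$ at $q$. By \cite[Section~$3.17$ and Claim~$3.17.1$]{Kollar_Sing}, this linearizes on $T_qM\cong\AF^d$, giving
\begin{align*}
\left(M/\langle\varphi^s\rangle,\pi(q)\right)\cong\left(\AF^d/\langle\varphi^{\lcm(s,s(q))}\rangle,0\right),
\end{align*}
where $\pi$ now denotes the quotient map $M\to M/\langle\varphi^s\rangle$.

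In the case $\lcm(s,s(q))=k$, we have $\varphi^{k}=\Id$, so the stabilizer is trivial. The quotient map is then étale at $q$, since a free action is étale near a point with trivial stabilizer. Hence $\pi(q)$ is a smooth point, because $M$ is smooth.

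The main obstacle is only bookkeeping. One has to make sure that the stabilizer formula of \cref{stab}, which is stated for the full group $\langle\varphi\rangle$, transfers to the subgroup by intersection. One also has to use that the intersection of cyclic subgroups is governed by $\lcm$ precisely because both exponents divide $k$. I do not expect any geometric difficulty beyond what is already used in \cref{stab}.
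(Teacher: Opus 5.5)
Your proposal is correct and follows essentially the same route as the paper's proof. It intersects $\langle\varphi^s\rangle$ with $\Stab(q)=\langle\varphi^{s(q)}\rangle$ from \cref{stab} to get $\langle\varphi^{\lcm(s,s(q))}\rangle$, then splits into the cases $\lcm(s,s(q))=k$ (trivial stabilizer, smooth image) and $\lcm(s,s(q))\in S$ (linearization via \cite[Section~3.17]{Kollar_Sing}).
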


\begin{proof}
By \cref{stab} the stabilizer of $q$ in the full group is
$\Stab_{\langle\varphi\rangle}(q)=\langle\varphi^{s(q)}\rangle$. Since
$\langle\varphi^s\rangle\subseteq\langle\varphi\rangle$, we obtain
\begin{align*}
    \Stab_{\langle\varphi^s\rangle}(q)
        =\left\langle\varphi^s\right\rangle\cap\Stab_{\left\langle\varphi\right\rangle}(q)
        =\left\langle\varphi^s\right\rangle\cap\left\langle\varphi^{s(q)}\right\rangle
        =\left\langle\varphi^{\lcm(s,s(q))}\right\rangle
\end{align*}

As $s$ and $s(q)$ both divide $k$, so does $\lcm(s,s(q))$. Moreover $\lcm(s,s(q))\geq s>1$, so either
$\lcm(s,s(q))\in S$ or $\lcm(s,s(q))=k$. If $\lcm(s,s(q))=k$ then $\langle\varphi^{\lcm(s,s(q))}\rangle$ is trivial, the
action of $\langle\varphi^s\rangle$ is free near $q$, and its image in the quotient is a smooth point.
If $\lcm(s,s(q))\in S$, then $s(q)\mid\lcm(s,s(q))$ gives $\varphi^{\lcm(s,s(q))}(q)=q$, so
$q\in\Fix_{\varphi^{\lcm(s,s(q))}}(M)$. Since $M$ is smooth at $q$, the finite group
$\Stab_{\langle\varphi^s\rangle}(q)=\langle\varphi^{\lcm(s,s(q))}\rangle$ acts linearly on
$T_qM\cong\AF^d$, and by \cite[Section~3.17]{Kollar_Sing}, we obtain
\begin{align*}
    \bigl(M/\langle\varphi^s\rangle,\,\pi(q)\bigr)\;\cong\;\left(\AF^d/\langle\varphi^{\lcm(s,s(q))}\rangle,\,0\right)\,.
\end{align*}
\end{proof}

Now, remark that $o(\varphi^s)=k/s$ and by \cref{fixed_points_automorphism}, we obtain that  
\begin{align*}
    \Fix_{\varphi^s}(M)=\bigcup_{i=1}^{o(\varphi^s)}V_i^{(s)}, \quad \text{for all }s\in S
\end{align*}
 where the sets $V_i^{(s)}$ are disjoint of dimension $v_{s}=\dim(\Fix_{\varphi^s}(M))=t(s-1)-1$.

\begin{remark}\label{rmk:diagonalization}
Let $s\in S$ and $q\in\Fix_{\varphi^s}(M)$. Since $\varphi^s$ has finite order and acts on $T_qM\cong\AF^d$, this action can be diagonalized as
\begin{align*}
    \varphi^s=\diag(\underbrace{1,\ldots,1}_{v_s\text{ times}},
    \underbrace{\xi^{\sigma_1},\ldots,\xi^{\sigma_{d-v_s}}}_{d-v_s\text{ times}})
\end{align*}
where $\xi$ is a primitive $o(\varphi^s)$-th root of unity and
$1\leq\sigma_i\leq o(\varphi^s)-1$ for every $i$ (see \cite[Section~4.2]{age}).
\end{remark}

Moreover, we can classify the type of the quotient singularities using the following theorem. Recall that \(v_s\leq \dim(M)-2\) by \cref{def_normal_variety}.

\begin{theorem}[{\cite[Theorem~$4.11$]{age}}]\label{age}
    Let $\AF^m/\langle\varphi\rangle$ be a normal quotient singularity where we denote by $\AF^m$ the $m$-dimensional complex affine space. Let
    \begin{align*}
    \varphi=\diag({\xi^{\sigma_1},\ldots,\xi^{\sigma_{m}}})\,, 
\end{align*}
    where $\xi$ is a primitive $o(\varphi)$-th root of unity and $1\leq \sigma_i\leq o(\varphi)-1$, for every $i$. Then, $\AF^m/\langle\varphi\rangle$
    is terminal (respectively canonical) if and only if 
    \begin{align*}
       \alpha_j=\frac{1}{o(\varphi)}\sum_{i=1}^{m}\left[j\sigma_i\right]>1 \quad (\text{respectively} \ \ \geq 1)\, ,\quad \text{for all }1\leq j\leq o\left(\varphi\right)-1\,.
    \end{align*}
    where $\left[j\sigma_i\right]$ denotes the smallest residue$\pmod {o(\varphi)}$, and $\alpha_j$ is called the $\age$ of $\varphi^j$.
\end{theorem}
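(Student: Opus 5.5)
The plan is to prove the criterion by toric geometry, treating $X:=\AF^m/\langle\varphi\rangle$ as an affine toric variety. Write $o:=o(\varphi)$. I will also assume, as is implicit in the intended use (the quotient map is quasi-\'etale by \cref{quasi_etale_morphism}), that no nontrivial power of $\varphi$ is a quasi-reflection.

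\emph{Step 1 (toric model).} Since $\varphi$ is diagonal, $\langle\varphi\rangle$ sits inside the torus $(\CC^*)^m$. Hence $X$ is the affine toric variety attached to the positive orthant $\sigma=\mathbb{R}^m_{\geq0}$, taken with respect to the overlattice
\begin{align*}
N:=\ZZ^m+\ZZ\cdot\tfrac{1}{o}(\sigma_1,\ldots,\sigma_m)\subset\mathbb{Q}^m .
\end{align*}
The absence of quasi-reflections should be exactly what guarantees that the rays of $\sigma$ have primitive generators $e_1,\ldots,e_m$ in $N$. Consequently the torus-invariant prime divisors $D_i$ of $X$ are the images of the coordinate hyperplanes. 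Then $K_X=-\sum D_i$, and $K_X$ is $\mathbb{Q}$-Cartier, given on $\sigma$ by the linear function $\ell(u)=\sum_i u_i$, which takes the value $1$ on each $e_i$.

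\emph{Step 2 (discrepancies of toric divisors).} Take a toric log resolution, i.e.\ a regular fan subdividing $\sigma$. For every primitive $v\in N\cap\sigma$ giving a ray of the subdivision, the standard toric computation gives the discrepancy
\begin{align*}
a(E_v,X)=\ell(v)-1 .
\end{align*}
Since discrepancies over a variety can be computed on a single log resolution, and any primitive $v$ in the interior of a face of $\sigma$ can be made a ray of some regular subdivision, $X$ is terminal (resp.\ canonical) if and only if $\ell(v)>1$ (resp.\ $\geq1$) for every primitive $v\in N\cap\sigma$ with $v\neq e_i$.

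\emph{Step 3 (reduction to ages).} Every element of $N$ can be written uniquely modulo $\ZZ^m$ as $\frac{1}{o}([j\sigma_1],\ldots,[j\sigma_m])$ for some $0\leq j\leq o-1$. Hence every $v\in N\cap\sigma$ has the form
\begin{align*}
v=\tfrac{1}{o}([j\sigma_1],\ldots,[j\sigma_m])+w,\qquad w\in\ZZ^m_{\geq0}.
\end{align*}
If $j\neq0$, then $\ell(v)\geq\alpha_j$, with equality when $w=0$. If $j=0$, then $v\in\ZZ^m_{\geq0}$; such a $v$ is either some $e_i$ or satisfies $\ell(v)\geq2$. The points $v_j:=\frac{1}{o}([j\sigma_i])_i$ lie in $\sigma$, and since all their coordinates are $<1$ none of them is a multiple of an $e_i$. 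If some $v_j$ is not primitive, its primitive generator $v_j/c$ (with $c\geq 2$) is of the form $v_{j'}$ for some $j'$, since it has coordinates in $[0,1)$, and it has smaller $\ell$. So the minimum of $\ell$ over the relevant primitive vectors is $\min(2,\min_j\alpha_j)$. Both directions of the criterion follow.

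The main obstacle is Step 2: justifying that toric divisors over $X$ suffice to test terminality, and that the discrepancy formula holds when $K_X$ is only $\mathbb{Q}$-Cartier. I would handle this by citing the standard fact that a toric variety admits a toric log resolution and that its discrepancies are read off from the support function of $K_X$. The quasi-reflection-free hypothesis also needs care: it is needed so that the $D_i$ are not branch divisors. Without it, $K_X$ would acquire a boundary correction and the statement would fail.
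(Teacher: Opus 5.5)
The paper gives no proof of this statement; it quotes the Reid--Tai criterion from \cite{age}. Your toric argument is the standard proof of that criterion for cyclic groups, and it is correct.

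What your route buys over the bare citation is that it exposes the hypothesis you add at the start. The statement as printed omits it, and it is genuinely necessary. For instance, $\AF^1/\langle -1\rangle\cong\AF^1$ is smooth although $\alpha_1=\tfrac12$. Similarly, $\varphi=\diag(i,-1)$ on $\AF^2$ has $\alpha_1=\tfrac34$, yet the quotient has invariants $x^4,y^2,x^2y$ with relation $uv=w^2$, so it is an $A_1$ singularity and hence canonical.

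In the paper's applications the hypothesis does hold. Every nontrivial $\varphi^s$ has fixed locus of codimension at least two in $M$, since the quotient maps are quasi-\'etale. Hence no element of a stabilizer acts on $T_qM$ as a quasi-reflection. Your Step 1 correctly identifies this condition with primitivity of the $e_i$ in $N$.

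There are a few small repairs to make.

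\begin{itemize}
\item In Step 3, the fact that ``all coordinates are $<1$'' does not by itself keep $v_j$ off the ray of $e_i$ (consider $\tfrac12 e_i$). What you actually use is that, $e_i$ being primitive, the lattice points on that ray are exactly the integer multiples $ce_i$ with $c\geq 1$. This is precisely where the no-quasi-reflection hypothesis enters. You also need $v_j\neq 0$, which holds because $\varphi^j\neq\Id$.
\item The minimum of $\ell$ over primitive $v\neq e_i$ need not be literally $\min(2,\min_j\alpha_j)$. Your argument shows that it lies between $\min(2,\min_j\alpha_j)$ and $\min_j\alpha_j$, and that is all you need.
\item The obstacle you flag in Step 2 is milder than you fear. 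For the ``only if'' direction, the star subdivision of $\sigma$ at $v$ already gives a toric modification on which $E_v$ is exceptional with $a(E_v,X)=\ell(v)-1$; no regularity is needed.
\item For the ``if'' direction, any toric resolution $f\colon Y\to X$ suffices. Once all coefficients $a_i$ in $K_Y=f^*K_X+\sum a_iE_i$ are $\geq 0$, every divisor $F$ exceptional over $Y$ satisfies $a(F,X)=a(F,Y)+\sum_i a_i\operatorname{mult}_F E_i\geq 1$. No SNC condition on the resolution is required.
\end{itemize}
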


\begin{proposition}\label{1_4}
   Let $M$ be a Calabi--Yau GFM of type \((d;k,n)\), codimension $n-d=t=1$. Let $\varphi\in H$ be an automorphism of $M$ with empty fixed locus and order $4$. Then $M/\langle\varphi\rangle$ has canonical singularities.
\end{proposition}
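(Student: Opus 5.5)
With $t=1$ and $k=4$, the Calabi--Yau condition gives $n=tk-1=3$ and $d=t(k-1)-1=2$. So $M$ is the Fermat quartic surface in $\PP^3$ and, up to permutation of coordinates (\cref{def_automorphism_empty}), $\varphi=\diag(1,i,-1,-i)$. The plan is to locate the non-free points of the $\langle\varphi\rangle$-action and show that each gives a canonical quotient singularity, using \cref{age} on tangent spaces.

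First, determine the points with nontrivial stabilizer. Here $S=\{2\}$. By \cref{fixed_points_prime}, $\varphi^3$ has empty fixed locus, so only $\varphi^2$ has fixed points. By \cref{fixed_points_automorphism}, $\Fix_{\varphi^2}(M)$ consists of $o(\varphi^2)=2$ components of dimension $t(s-1)-1=0$, each made of $k^{n-d}=4$ points by \cref{fixed_points_2}. Explicitly, $\varphi^2=\diag(1,-1,1,-1)$, and its fixed points on $M$ are the $8$ points $\{x_2=x_4=0,\ x_1^4+x_3^4=0\}$ and $\{x_1=x_3=0,\ x_2^4+x_4^4=0\}$. By \cref{stab}, each such point $q$ has $\Stab(q)=\langle\varphi^2\rangle\cong\ZZ/2\ZZ$. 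The germ of $M/\langle\varphi\rangle$ at $\pi(q)$ is then $\AF^2/\langle\varphi^2\rangle$, with $\varphi^2$ acting on $T_qM$. At every other point the action is free, so the quotient is smooth there.

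Second, compute the action of $\varphi^2$ on $T_qM$. Since $q$ is an isolated fixed point, $v_2=0$. By \cref{rmk:diagonalization}, the action is then $\diag(-1,-1)$: both eigenvalues must be nontrivial, and the only nontrivial value for an element of order $2$ is $-1$. As a cross-check, take $q=[1:0:\zeta:0]$ with $\zeta^4=-1$. Local coordinates there are $x_2/x_1$ and $x_4/x_1$, since $\partial f/\partial x_3\neq0$, and both are multiplied by $-1$. This is consistent with $(\varphi^2)^*v_M=v_M$, because $\det=1$.

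Finally, apply \cref{age} with $o(\varphi^2)=2$, $\sigma_1=\sigma_2=1$ and $j=1$. This gives age $\alpha_1=(1+1)/2=1\geq1$, so the singularity is canonical (an $A_1$ point), though not terminal. Hence $M/\langle\varphi\rangle$ has canonical singularities: it has $4$ $A_1$ points, since the $8$ fixed points are swapped in pairs by $\varphi$. This matches \cref{examples_log-Enriques}~$1)$. The only step needing care is justifying that both tangent eigenvalues equal $-1$. The isolatedness argument via \cref{rmk:diagonalization} settles it, and the explicit local-coordinate check confirms it.
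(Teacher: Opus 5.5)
Your proof is correct and follows essentially the same route as the paper. You reduce via \cref{stab} to the germ $\AF^2/\langle\varphi^2\rangle$ at the eight points of $\Fix_{\varphi^2}(M)$, diagonalize $\varphi^2$ on $T_qM$ as $\diag(-1,-1)$ (the paper motivates this via invariance of the volume form, which you use only as a cross-check alongside isolatedness and an explicit chart), and get age $1$ from \cref{age}.
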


\begin{proof}  Recall that by \cref{order} the automorphism $\varphi$ has order $k=4$. 
First observe that by using the Calabi--Yau assumption, we obtain that $n=3$ and $d=2$ (see \cref{k_d_odd}). Then, $M=X_4$ is the Fermat quartic, so that we could give a proof using \cref{examples_log-Enriques}~$(1)$, but we use \cref{age} to follow the same strategy as in the general case. Note that $s=2$ is the unique divisor of $k$ i.e $s\in S$ and by \cref{fixed_points_automorphism}, we obtain that $v_2=\dim(\Fix_{\varphi^2}(M))=t(s-1)-1=0$.

    Recall the $\varphi$ can be written as follows:
    \begin{align*}
        \varphi([x_1:x_2:x_3:x_4])=[x_1:i\hspace{0.25mm}x_2:-\hspace{0.25mm}x_3:-i\hspace{0.25mm}x_4]\ ,
    \end{align*}
    thus, we have that
    \begin{align*}
    \varphi^2([x_1:x_2:x_3:x_4])=\left[x_1:-\hspace{0.25mm}x_2:x_3:-\hspace{0.25mm}x_4\right].
    \end{align*}

By \cref{{Lefschetz_number}}, the action of $\varphi$ on the volume form is $\varphi^* v_{X_4}=-v_{X_4}$ so that $\varphi^2$ leaves it invariant. At $q\in\Fix_{\varphi^2}(X_4)$ in local coordinates we can write $\varphi^2$ in the diagonal form (see \cref{rmk:diagonalization})
\begin{align*}
        \left(\begin{array}{rr}
            -1 & 0 \\
             0 & -1
        \end{array}\ \right)\, .
    \end{align*}
One computes then the age of $\varphi^2$ as:
\begin{align*}
   \alpha_1= \frac{1}{2}(1+1)=1\, ,
\end{align*}
so by \cref{age} the quotient $\AF^2/\langle \varphi^2\rangle$ has canonical singularities (as expected since $\varphi^2$ is a symplectic involution on a K3 surface). Since $\varphi$ has empty fixed locus, we have $s(q) = 2$ for every 
$q \in \Fix_{\varphi^2}(M)$, and hence by \cref{stab} $\left(M/\left\langle\varphi\right\rangle, \pi(q)\right)$ is isomorphic to 
$(\mathbb{A}^2/\langle\varphi^2\rangle, 0)$. Therefore $M/\langle\varphi\rangle$ 
has canonical singularities.
\end{proof}

\begin{proposition}\label{terminal_singularities}
  Let $M$ be a Calabi--Yau GFM of type $(d;k,n)$. Assume that $k\geq 2$ and $(d,k)\not=(2,4)$. Assume that $\varphi\in H$ is an automorphism of $M$ with empty fixed locus. Then $M/\langle\varphi\rangle$ is smooth if and only if $k$ is a prime number, and otherwise it has terminal singularities. The same statements hold for every quotient $M/\langle\varphi^s\rangle$ with $1\leq s\leq k-1$.
\end{proposition}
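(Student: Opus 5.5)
The smoothness part is already settled. By \cref{fixed_points_prime} and \cref{rational_klt}~$(ii)$, the map $\pi\colon M\to M/\langle\varphi\rangle$ is étale exactly when $k$ is prime; this is the argument in the proof of \cref{quotients_log-enriques_CY}. For a subgroup $\langle\varphi^s\rangle$ the same reasoning applies to its generator. So the real content is terminality when $k$ is not prime, and I would prove it with the Reid--Tai criterion \cref{age}.

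\textbf{Reduction to a local linear action.} Let $q\in M$ have nontrivial stabilizer. By \cref{stab}, near $\pi(q)$ the quotient looks like $\AF^d/\langle\varphi^{s}\rangle$ with $s=s(q)\in S$. For a quotient $M/\langle\varphi^{s'}\rangle$, \cref{stab_subgroup} gives the same picture with stabilizer $\langle\varphi^{\lcm(s',s(q))}\rangle$. Hence in every case it suffices to show the following: for each $s\in S$ and each $q\in\Fix_{\varphi^s}(M)$, the linear action of $g:=\varphi^s$ on $T_qM$ gives a terminal quotient. A point of $\Fix_{\varphi^{s}}(M)$ is also fixed by the larger cyclic group above, but the local group is always of the form $\langle\varphi^{s}\rangle$ for some $s\in S$. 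So the age computation below, done uniformly in $s$, covers every case.

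\textbf{Computing the weights.} Fix $q\in V_i^{(s)}\subset\PP^{st-1}_i$. Write $o=o(\varphi^s)=k/s$ and $\zeta=\xi_k^s$, a primitive $o$-th root of unity. The coordinates of $\PP^n$ split into $o$ classes according to the residue of $r$ modulo $o$. Each class has $st$ coordinates, and $g$ acts on the class labelled $c$ by $\zeta^{c}$. The point $q$ is supported on one class $c_0$.
\begin{itemize}
\item The differentials of the $t$ defining equations at $q$ involve only the terms $k x_j^{k-1}dx_j$ with $j$ in class $c_0$, since all other coordinates of $q$ vanish.
\item Consequently the $t(k-s)=d-v_s$ coordinate directions outside class $c_0$ are tangent to $M$ and complement $T_qV_i^{(s)}$.
\item After dividing by the affine coordinate $x_{j_0}$ with $j_0$ in class $c_0$, the nontrivial eigenvalues of $g$ on $T_qM$ are $\zeta^{c}$ for $c=1,\dots,o-1$, each with multiplicity exactly $st$.
\end{itemize}
This is the step I expect to need the most care: checking the transversality claim and the bookkeeping of weights after dehomogenizing. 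The rest is arithmetic.

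\textbf{Age estimate.} Take $1\le j\le o-1$ and set $g'=\gcd(j,o)$, $o'=o/g'\ge2$. As $c$ runs over $1,\dots,o-1$, the residues $[jc]$ run over the nonzero multiples $g'm$ ($1\le m\le o'-1$) of $g'$, each occurring $g'$ times. Therefore
\begin{align*}
\alpha_j=\frac{st}{o}\sum_{c=1}^{o-1}[jc]=\frac{st}{o}\cdot\frac{o\,g'(o'-1)}{2}=\frac{st\,(o-g')}{2}\geq\frac{st\,o}{4}=\frac{tk}{4},
\end{align*}
where we used $g'\le o/2$. Thus $\alpha_j>1$ unless $tk\le4$. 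Since $k\ge4$ is composite, this forces $t=1$ and $k=4$, that is $(d,k)=(2,4)$, which is exactly the excluded case of \cref{1_4}.

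Reid--Tai (\cref{age}) then gives terminal singularities at every point with nontrivial stabilizer. All other points are smooth points of the quotient. This proves the claim for $M/\langle\varphi\rangle$ and, through the reduction above, for every $M/\langle\varphi^s\rangle$.
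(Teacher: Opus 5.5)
Your proof is correct and follows the same route as the paper: smoothness via \cref{quotients_log-enriques_CY}, then reduction of each singular point of $M/\langle\varphi\rangle$ and of $M/\langle\varphi^s\rangle$ to a linear model $\AF^d/\langle\varphi^{\gamma}\rangle$ with $\gamma\in S$ via \cref{stab} and \cref{stab_subgroup}, then the Reid--Tai criterion \cref{age}, with $(d,k)=(2,4)$ as the only case where an age can equal $1$. The difference is in the age step. The paper uses the crude bound $\alpha^s_1\ge (d-v_s)/o(\varphi^s)$ and only asserts that $j\ge 2$ is ``similar''. Your explicit weight count (each $\zeta^c$, $c\neq 0$, with multiplicity $st$) gives the exact value $\alpha_j=st\,(o-\gcd(j,o))/2\ge tk/4$ for every $j$ at once, which makes that step fully rigorous.
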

\begin{proof}  Recall that by \cref{order} the automorphism $\varphi$ has order $k$. The assertion about the smoothness of the quotients is given in \cref{quotients_log-enriques_CY}. Now, assume that $k$ is not a prime number that means that $S$ is non empty. Let $s \in S$ and $q \in \Fix_{\varphi^s}(M)$. 
By \cref{rmk:diagonalization}, the action of $\varphi^s$ on $T_qM \cong \AF^d$ can be diagonalized as
$$
\varphi^s=\diag(1,\ldots,1,\xi^{\sigma_1},\ldots,\xi^{\sigma_{d-v_s}})
$$
where $\xi$ is a primitive $o(\varphi^s)$-th root of unity, $v_s=\dim\left(\Fix_{\varphi^s}(M)\right)$, and $1\leq\sigma_i\leq o(\varphi^s)-1 $ for every $i$. We can assume that the the smallest residue mod $o(\varphi^s)$ is $[\sigma_i]=\sigma_i$ for all $i=1,\ldots d-v_s$. 
This gives:
$$
\alpha^s_1:=\frac{1}{o(\varphi^s)}\sum_{i=1}^{d-v_s}\sigma_i\geq \frac{d-v_s}{o(\varphi^s)}=\frac{t(k-s)}{k}s\geq \frac{ts}{2}\,,
$$
since $d=t(k-1)-1$, $v_s=t(s-1)-1$ and $(k-s)/k\geq \frac{1}{2}$. We distinguish two cases:\\
{\bf Case 1}. If $ts>2$ then $\alpha^s_1>1$.\\
{\bf Case 2}. If $ts=2$, the only possibility is $t=1$ and $s=2$, since we are assuming that $S$ is non empty. Then, we obtain $\frac{2(k-2)}{k}\geq 1$. If we assume that
$\frac{2(k-2)}{k}=1$, then $k=4$ and we get $d=2$, which is not possible by assumption.
Therefore $\frac{2(k-2)}{k}>1$ and thus $\alpha^s_1>1$.\\
One can then repeat a similar computation for $j=2,\dots ,o(\varphi^s)-1$, since the action of $(\varphi^s)^j$ can be diagonalized locally in the same way, and we get that all the ages $\alpha^s_j>1$. 

Finally, by \cref{stab}, we have that $\Stab(q)=\langle\varphi^s\rangle$, for every $q \in \Fix_{\varphi^s}(M)$ whose index is $s\in S$. Hence, 
$(M/\langle\varphi\rangle, \pi(q))$ is isomorphic to $(\mathbb{A}^{d}/\langle\varphi^s\rangle, 0)$ and thus the singularity of $M/\langle\varphi\rangle$ at $\pi(q)$
coincides with that of $\AF^{d-v_s}/\langle\varphi^s\rangle$ at the origin, which is terminal by the
above computations and by \cref{age}. Since $\varphi$ has empty fixed locus, the singular points of
$M/\langle\varphi\rangle$ arise only from $\Fix_{\varphi^s}(M)$, where $s\in S$.
We conclude that $M/\langle\varphi\rangle$ has terminal singularities whenever
$(d,k)\neq(2,4)$.

The same conclusion holds for each quotient $M/\langle\varphi^s\rangle$ with $s\in S$. Indeed, by
\cref{stab_subgroup}, for a point $q\in \Fix_{\varphi^{s(q)}}(M)$ where $s(q)$ is its index, the stabilizer of $q$ in $\langle\varphi^s\rangle$ is
$\langle\varphi^{\gamma}\rangle$ with $\gamma=\lcm(s,s(q))$, and we have two cases:
\begin{itemize}
    \item [$(i)$] If $\gamma=k$, then $\langle\varphi^{\gamma}\rangle$ is trivial and the image of $q$
    is a smooth point of $M/\langle\varphi^s\rangle$.
    \item [$(ii)$] If $\gamma\in S$, then the singularity of $M/\langle\varphi^s\rangle$ at $\pi(q)$
    is isomorphic to that of $\AF^{d-v_{\gamma}}/\langle\varphi^{\gamma}\rangle$ at the origin. Its $\age$ can be computed in the same way as before with $\gamma$ in place of $s$, so $\alpha^{\gamma}_j>1$ for all $1\leq j\leq o(\varphi^{\gamma})-1$ whenever $(d,k)\neq(2,4)$. Hence $M/\langle\varphi^s\rangle$
    has terminal singularities whenever $(d,k)\neq(2,4)$.
\end{itemize}
Finally, for every $1\leq s\leq k-1$ we have $\langle\varphi^s\rangle=\langle\varphi^{\gcd(s,k)}\rangle$ with $\gcd(s,k)=1$ or $\gcd(s,k)\in S$, so $M/\langle\varphi^s\rangle$ equals either $M/\langle\varphi\rangle$ or $M/\langle\varphi^{\gamma}\rangle$ for some $\gamma\in S$. This concludes the proof. 
\end{proof}

We are now ready to show the main results of the paper. 

\begin{theorem}\label{main}
 Let $M_n^k$ be a Calabi--Yau GFM of type $(d;k,n)$. Assume that $\varphi\in H$ is an automorphism of $M$ with empty fixed locus. Then $\varphi$ has order $k$ and we have:
    \begin{itemize}
        \item [$(i)$] If $d=2$ and $k=4$, then $n=3$ and $M_3^4/\langle\varphi\rangle$ is a Log Enriques surface of index two with canonical singularities, whose minimal resolution is an Enriques surface.
        \item [$(ii)$] If $d$ is even and $k=2$, then $M_n^2/\langle\varphi\rangle$ is an Enriques manifold of index two, that is a special member in the family described in \cref{Enriques_manifold}.
        \item [$(iii)$] If $d\geq 4$ is even and $k\neq 2$, then $M_n^k/\langle\varphi\rangle$ is a Log Enriques variety of index two with terminal singularities.
        \item [$(iv)$] If $d$ is odd and $k$ is a prime number, then $M_n^k/\langle\varphi\rangle$ is a Calabi--Yau manifold.
        \item [$(v)$] If $d$ is odd and $k$ is not a prime number, then  $M_n^k/\langle\varphi\rangle$ is a Calabi--Yau variety with terminal singularities.
    \end{itemize}  
\end{theorem}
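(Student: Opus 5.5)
The proof assembles the earlier results; no new computation is needed. First, \cref{order} (a consequence of \cref{def_automorphism_empty}) gives that $\varphi$ has order $k$. By \cref{quasi_etale_morphism}, $\pi\colon M\to M/\langle\varphi\rangle$ is quasi-étale, and it is étale when $k$ is prime. So \cref{log_enriques_CY_type} applies. For $d$ even, the quotient is a Log Enriques variety of CY type and index two. For $d$ odd, it is a CY variety. This is exactly \cref{quotients_log-enriques_CY}, which also gives smoothness if and only if $k$ is prime (if and only if $k=2$ when $d$ is even). I then split into the five cases by the parity of $d$, using \cref{k_d_odd}: $d$ even forces $k$ even and $t$ odd.

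For $(ii)$: $d$ is even and $k=2$, so the quotient is smooth by \cref{quotients_log-enriques_CY}. Hence it is an Enriques manifold of index two, which is precisely the situation of \cref{Enriques_manifold}. For $(iv)$: $d$ is odd and $k$ is prime, so the quotient is a smooth CY variety, i.e.\ a CY manifold. For $(iii)$ and $(v)$: $k$ is not prime (in $(iii)$, $k$ is even and $k\neq 2$). The case $(d,k)=(2,4)$ is excluded, since $d\geq 4$ in $(iii)$ and $d$ is odd in $(v)$. So \cref{terminal_singularities} gives terminal singularities. Combined with the Log Enriques or CY statement above, this yields the claims.

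For $(i)$: the relations $d=t(k-1)-1=2$ and $k=4$ give $t=1$, hence $n=3$, and $M$ is the Fermat quartic $X_4$. The quotient is a Log Enriques surface of index two by \cref{quotients_log-enriques_CY}, and it has canonical singularities by \cref{1_4}. The remaining point, and the only one not directly packaged earlier, is that the minimal resolution is an Enriques surface. For this I would invoke \cref{examples_log-Enriques}~$(1)$. There, $X_4/\langle\varphi^2\rangle$ has eight $A_1$ points, and its resolution is a K3 surface $\widetilde{Y}_4$. The induced involution swaps the $A_1$ points in pairs and lifts freely to $\widetilde{Y}_4$, so $\widetilde{Z}_4=\widetilde{Y}_4/\iota$ is an Enriques surface. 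Since $\widetilde{Z}_4$ resolves four $A_1$ points of $Z_4$ by $(-2)$-curves, it is the minimal resolution.

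I expect this last identification to be the main obstacle. It requires checking that the automorphism in \cref{def_automorphism_empty} coincides, up to permutation of the coordinates, with the one in the example. That holds because $m_\varphi=(0,1,2,3)$ in both cases.
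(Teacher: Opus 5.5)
Your proposal is correct and is essentially the paper's proof, which likewise just assembles \cref{examples_log-Enriques}, \cref{quotients_log-enriques_CY}, \cref{1_4} and \cref{terminal_singularities}, with \cref{order} giving the order of $\varphi$. The identification you flag as the main obstacle is harmless: for $t=1$, $k=4$ every fixed-point-free $\varphi\in H$ has $m_\varphi$ a permutation of $(0,1,2,3)$, and coordinate permutations are automorphisms of the Fermat quartic, so the quotient is isomorphic to the $Z_4$ of \cref{examples_log-Enriques}.
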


\begin{proof}
    The proof of this theorem follows from \cref{examples_log-Enriques}, \cref{quotients_log-enriques_CY}, \cref{1_4} and \cref{terminal_singularities}.
\end{proof}

With the same assumptions we can moreover show:
\begin{corollary} 
Assume that $k$ is not a prime number and $s\in S$.
\begin{itemize}
\item[$(i)$] If $d$ is odd then $M_n^k/\langle\varphi^s\rangle$ is a Calabi--Yau variety with terminal singularities.
\item[$(ii)$] If $d$ is even and $(d,k)\neq (2,4)$ then $M_n^k/\langle\varphi^s\rangle$ is a Calabi--Yau variety if and only if $s$ is even and it is a  Log Enriques variety of index two if and only if $s$ is odd. In both cases the singularities are terminal.
\item [$(iii)$] If $(d,k)=(2,4)$ then $n=3$ and $M_3^4/\langle\varphi^2\rangle$ is a singular K3 surface with $8$ singularties of type $A_1$.
\end{itemize}
\end{corollary}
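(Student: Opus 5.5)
Throughout, $k$ is not prime and $s\in S$. The plan is to treat $Y:=M_n^k$ with the group $G_s:=\langle\varphi^s\rangle$ of order $o(\varphi^s)=k/s$, and to proceed in three steps: singularities and the quasi-étale property, then the canonical class, then the vanishing of reflexive forms.

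\textbf{Singularities.} By \cref{terminal_singularities}, applied to $M/\langle\varphi^s\rangle$ with $(d,k)\neq(2,4)$, the singularities are terminal. In particular they are klt and rational, by \cref{klt} and \cref{rational_klt}.

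\textbf{Quasi-étale.} The map $\pi_s\colon M\to M/\langle\varphi^s\rangle$ is quasi-étale. Indeed, every nontrivial element of $G_s$ is some $\varphi^{j}$ with $\gcd(j,k)\in S$ or $\gcd(j,k)=1$. Its fixed locus is either empty or equal to $\Fix_{\varphi^{\gcd(j,k)}}(M)$, and this has codimension $\geq 2$ by \cref{def_normal_variety}.

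\textbf{Canonical class.} Here I cannot use \cref{log_enriques_CY_type} directly, since $\varphi^s$ has fixed points. Instead I use the identity
\[
(\varphi^s)^*v_M=\varepsilon^s v_M,\qquad \varepsilon=(-1)^{d+1},
\]
which follows from \cref{Lefschetz_number} applied to $\varphi$. The argument in the proof of \cref{log_enriques_CY_type} only uses that the quotient map is quasi-étale, so it carries over. We get
\[
H^0\bigl(\omega^{[m]}_{M/\langle\varphi^s\rangle}\bigr)=H^0(\omega_M^{\otimes m})^{G_s},
\]
and the analogous identification holds for $\Omega^{[j]}$. Hence the quotient has trivial canonical class if $\varepsilon^s=1$, and $2$-torsion nontrivial canonical class if $\varepsilon^s=-1$. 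The forms $\Omega^{[j]}$ for $0<j<d$ vanish because they already vanish on $M$.

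\textbf{Case analysis.} For $d$ odd, $\varepsilon=1$, so the quotient is a CY variety; this gives (i). For $d$ even, $\varepsilon=-1$, and the quotient is CY if and only if $s$ is even, and Log Enriques of index two with $q=0$ if and only if $s$ is odd. Both alternatives occur as $s$ varies over $S$, which gives (ii). Note also that when $d$ is even, $k$ is even and $t$ is odd by \cref{k_d_odd}.

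\textbf{Case $(d,k)=(2,4)$.} Here $t=1$, $n=3$, $S=\{2\}$, and $M$ is the Fermat quartic with $\varphi^2=\diag(1,-1,1,-1)$. This is \cref{examples_log-Enriques}(1): $\varphi^2$ acts symplectically, since $(\varphi^2)^*v=v$. Its fixed points are $\{x_2=x_4=0,\ x_1^4+x_3^4=0\}$ and $\{x_1=x_3=0,\ x_2^4+x_4^4=0\}$, four points each, so eight in total. At each of them the local action is $-\Id$, which gives an $A_1$ singularity (this is the age computation of \cref{1_4}). The quotient has trivial canonical class and $q=0$, and its minimal resolution is a K3 surface, so it is a singular K3 surface with $8A_1$. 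This is (iii).

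\textbf{Main obstacle.} The subtle point is justifying the sign $\varepsilon^s$ when $\varphi^s$ has fixed points. I would avoid applying Lefschetz to $\varphi^s$ and instead deduce the sign purely algebraically from $\varphi^*v=\varepsilon v$.
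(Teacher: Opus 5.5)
Your proposal is correct and follows essentially the paper's own (very terse) proof: terminality from \cref{terminal_singularities}, the sign $(\varphi^s)^*v_M=\varepsilon^s v_M$ fed into the quasi-\'etale invariant-forms argument of \cref{log_enriques_CY_type}, and \cref{examples_log-Enriques}~$1)$ for the case $(d,k)=(2,4)$. The only blemish is your side remark that both alternatives in (ii) occur as $s$ ranges over $S$, which is false in general: for $k=4$, $t=3$ (so $d=8$) one has $S=\{2\}$ and only the CY case occurs. That remark is not needed for the ``if and only if'' statement, so the argument is unaffected.
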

\begin{proof}
This follows from \cref{terminal_singularities}, the proofs of \cref{Lefschetz_number} and \cref{log_enriques_CY_type}, and \cref{examples_log-Enriques}.

\end{proof}

\section{Two different families of Enriques manifolds of Calabi--Yau type}\label{families_enriques}
The aim of this section is to prove that the Enriques manifolds of CY type constructed in \cite[Theorem 3.1]{OS} and \cite[6.1.5]{LogEnriques} are not deformation equivalent. This will follow directly by the computation of their Betti numbers, for this we firstly recall some facts about cohomology of complete intersections and Hilbert schemes of points on surfaces.

\subsection{Cohomology of complete intersections}\label{Jabocian_ring}

We consider a smooth complete intersection \(X\subset \mathbb{P}^{n}\) of dimension \(\dim X={d}\). If \(\iota \colon X\hookrightarrow \mathbb{P}^{n}\) denotes the inclusion, then we have the pull-back (or restriction) map \(\iota
^*\colon H^*(\mathbb{P}^n,\mathbb{Z})\to H^*(X,\mathbb{Z})\) whose cokernel \(\operatorname{Coker}(\iota^*)=H_{prim}^*(X,\mathbb{Z})\) is called \textit{primitive cohomology} according to \cite[Definition 2.18]{Dimca_complete}. We recall the following standard fact about the cohomology of \(X\), which can be obtained with an iterated application of the Lefschetz hyperplane theorem, also proved for example in \cite{dimca1992singularities}.

\begin{lemma}\label{lem:primitive_cohomology_complete_intersections}
    Let \(X\subset \mathbb{P}^{n}\) be a smooth complete intersection with \(\dim X={d}\). The restriction morphism \(\iota^k\colon H^k(\mathbb{P}^{n},\mathbb{Z})\to H^k(X,\mathbb{Z})\) is an isomorphism for any \(k\not= d\), and it is injective for \(k=d\).
\end{lemma}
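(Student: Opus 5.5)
We argue by induction on the codimension of $X$ in $\mathbb{P}^n$, chaining the Lefschetz hyperplane theorem with a Veronese re-embedding. Write $X = Y_c \subset Y_{c-1}\subset\cdots\subset Y_0=\mathbb{P}^n$, where $Y_j$ is cut out by the first $j$ defining equations, of degrees $a_1,\dots,a_c$, and $c=n-d$. The first task is to choose these so that every $Y_j$ is a smooth complete intersection of dimension $n-j$. For a smooth complete intersection we can replace the defining equations by general combinations of them; this preserves the ideal and hence $X$. By a Bertini-type argument in the linear systems spanned by the equations, the intermediate $Y_j$ can then be taken smooth. Once that holds, the inclusion $X\hookrightarrow \mathbb{P}^n$ factors through the chain, and it suffices to prove the statement for a single step $Y_{j}\subset Y_{j-1}$, in the shifted range appropriate to $Y_{j-1}$.

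For a single step, $Y_j$ is the zero locus in $Y_{j-1}$ of a section of $\mathcal{O}(a_j)$, which is ample. Composing with the Veronese embedding of degree $a_j$ turns $Y_j$ into a hyperplane section of a smooth projective variety. The Lefschetz hyperplane theorem, in its integral form via Morse theory or Andreotti--Frankel applied to the affine complement $Y_{j-1}\setminus Y_j$, then gives the following. The restriction $H^k(Y_{j-1},\mathbb{Z})\to H^k(Y_j,\mathbb{Z})$ is an isomorphism for $k<\dim Y_j$ and injective for $k=\dim Y_j$. Composing these maps along the chain shows that $\iota^k\colon H^k(\mathbb{P}^n,\mathbb{Z})\to H^k(X,\mathbb{Z})$ is an isomorphism for $k<d$ and injective for $k=d$. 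The point is that at each intermediate step the relevant degree $k\le d$ lies strictly below $\dim Y_{j-1}-1$, so it stays in the isomorphism range until the last step.

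For $k>d$ we use Poincaré duality on $X$ and on $\mathbb{P}^n$. From the range $k<d$ we know that $H^{m}(X,\mathbb{Z})$ is torsion-free for $m<d$. The universal coefficient theorem combined with the isomorphism $H_{m}(X)\cong H_m(\mathbb{P}^n)$ for $m<d$, which is the dual form of Lefschetz, then gives $H^{2d-m}(X,\mathbb{Z})\cong H_{m}(X,\mathbb{Z})\cong \mathbb{Z}$ or $0$ according to the parity of $m$. It remains to check that $\iota^*$ realizes this isomorphism and is not just an abstract one. For this, compare $\iota^*(h^{d-p})$, where $h$ is the hyperplane class, with the generator. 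Since $\int_X h^d=\deg X=\prod a_i$, the class $h^{d-p}$ is a multiple of the generator but need not be equal to it. This is the main obstacle: integrally, the restriction in degrees above $d$ is only an isomorphism up to the index $\deg X$, so the integral statement as written should really be read rationally, or with the Gysin map $\iota_*$, for $k>d$. I would therefore state and prove the $k>d$ case with $\mathbb{Q}$-coefficients, which is all that is needed for the Betti number computations that follow, and note that for $k<d$ and $k=d$ the integral statement holds as written.
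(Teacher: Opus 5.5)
Your argument for $k\le d$ is the iterated Lefschetz argument the paper has in mind; the paper gives no proof beyond that pointer and a reference to Dimca. Your objection for $k>d$ is also correct: as printed, the lemma is false integrally in that range.

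To sharpen your computation, take even $k=2j$ with $d<k\le 2d$. The class $\iota^*h^{d-j}$ generates $H^{2d-2j}(X,\mathbb{Z})$, since $2d-2j<d$. Unimodularity of the cup-product pairing, together with $\int_X\iota^*h^d=\deg X$, then forces $\iota^*h^j=\deg X\cdot g$ for a generator $g$ of $H^{2j}(X,\mathbb{Z})\cong\mathbb{Z}$. So the cokernel is $\mathbb{Z}/\deg X$, which is nonzero for every nonlinear $X$. Already for a smooth plane conic, $\iota^*h$ is twice the point class.

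There is a second failure you did not mention. For even $2d<k\le 2n$ the map is $\mathbb{Z}\to 0$, so it is not even injective. Your rational version must therefore be restricted to $k\le 2d$. The correct statement is:
\begin{itemize}
\item an integral isomorphism for $k<d$;
\item injective for $k=d$;
\item a rational isomorphism for $d<k\le 2d$ (equivalently, the Gysin map $\iota_*\colon H^k(X,\mathbb{Z})\to H^{k+2(n-d)}(\mathbb{P}^n,\mathbb{Z})$ is an integral isomorphism for $k>d$).
\end{itemize}
None of this affects the paper. It only applies the lemma to $b_2$ of complete intersections of dimension $2n\ge 4$, i.e.\ with $k=2<d$.

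One step in your set-up needs care: making all intermediate $Y_j$ smooth. Cut first by a general member of $I_X(a_{\max})$ and proceed in decreasing degree. In increasing order it can fail. For example, a smooth complete intersection of a quadric cone and a general cubic in $\mathbb{P}^3$ lies on no smooth quadric, since $h^0(I_X(2))=1$.

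You can also bypass intermediate smoothness entirely. The complement $\mathbb{P}^n\setminus X$ is covered by the $n-d$ affine open sets $\{f_i\neq 0\}$. Andreotti--Frankel plus Mayer--Vietoris give $H_m(\mathbb{P}^n\setminus X;\mathbb{Z})=0$ for $m\ge 2n-d$. Lefschetz duality $H^k(\mathbb{P}^n,X;\mathbb{Z})\cong H_{2n-k}(\mathbb{P}^n\setminus X;\mathbb{Z})$ then gives $H^k(\mathbb{P}^n,X;\mathbb{Z})=0$ for $k\le d$ directly.
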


For any \(k\), the image of the morphism \(\iota^k\) consists of Hodge classes, hence \(H^k_{prim}(X,\mathbb{Z})\) carries a natural integral Hodge structure whose Hodge decomposition we denote by \(H^k_{prim}(X,\mathbb{C})=\bigoplus_{p+q=k}H^{p,q}_{prim}(X)\).
One observes in particular that there is no variation of Hodge structures in any degree \(k\not= d\). The middle cohomology \(H^d(X,\mathbb{Z})\) and its Hodge structure were studied by several authors, following Griffith's theory of residues.  We refer for example to \cite{terasoma1990infinitestimal,konno1991variational,Dimca_complete}. In order to be more self-contained, we recall here some facts about the Hodge structure of complete intersections, which are probably known to experts.

Let \(f_1,\dots,f_c\in \mathbb{C}[x_0,\dots,x_n]\) the defining equation for the smooth complete intersection \(X\subset \mathbb{P}^n\), set
\begin{equation*}
    S= \mathbb{C}[x_0,\dots,x_{n},z_1,\dots,z_{c}]
\end{equation*}
with bigrading given by \(\deg(x_i)=(0,1)\), \( \deg(z_i)=(1,-\deg(f_i))\), and let \(F=\sum_{i=1}^c f_i z_i\). In particular, if \(\dim X=d\) then \(d=n-c\). The \textit{Jacobian ideal} is the homogeneous ideal \begin{equation*}
     J=\left(\frac{\partial F}{\partial x_i},\frac{\partial F}{\partial z_k}\right)_{i=0,\dots n; j=1,\dots,c}
\end{equation*}and the \textit{Jacobian ring} is given by \begin{equation*}
    R=S/J.
\end{equation*}
Notice that \(\frac{\partial F}{\partial z_j}=f_j\) and \(F=\sum_{j=1}^c z_j \frac{\partial F}{\partial z_j}\). Moreover, we have \(\deg(\frac{\partial F}{\partial x_i})=(1,-1)\) and \(\deg(\frac{\partial F}{\partial z_j})=(0,\deg(f_j))\).
We denote by \(R_{a,b}\) and \(J_{a,b}\) the homogeneous components of bidegree \((a,b)\).
Remarkably, if \(\omega_X\cong \mathcal{O}_X(m)\), there is an isomorphism
\begin{equation*}
    R_{p,m}\cong H_{prim}^{n-p,p}(X),
\end{equation*}
which is given by 
\begin{equation*}
    [P]\mapsto \operatorname{Res}\left(\frac{P\Omega_x\wedge \Omega_z}{F^{c+p}}\right)
\end{equation*}
where we denote $x=(x_0,\ldots,x_n)$ and $z=(z_0,\ldots, z_n)$ and recall that for a set of variables \(u=(u_0,\dots, u_r)\) we denote 
\begin{equation*}
    \Omega_u=\sum_{i=0}^r (-1)^iu_i du_0\wedge \dots \wedge\widehat{du_i}\wedge\dots du_r.
\end{equation*} 
 This is explained for example in \cite[Section 2]{terasoma1990infinitestimal}, \cite[Pages 90-91]{Dimca_complete}, and \cite[2.2]{fatighenti2021note}.

We will be interested in the case where \(X\) is a CY, hence \(m=0\), and it is smooth complete intersection of quadrics.
\begin{lemma}\label{lem:Euler_relation}
    With the above notation, assume that \(\deg(f_i)=2\) for \(i=1,\dots,c\). Then the only relation of linear dependence in \(J_{1,0}\) is given by 
    \begin{equation*}
        2\sum_{j=1}^cz_j\frac{\partial F}{\partial z_j}=\sum_{i=0}^n x_i\frac{\partial F}{\partial x_i}.
    \end{equation*}
\end{lemma}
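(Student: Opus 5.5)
The plan is to turn a linear relation in $J_{1,0}$ into an infinitesimal linear symmetry of the system of quadrics $f_1,\dots,f_c$, and then to use the vanishing of holomorphic vector fields on the Calabi--Yau manifold $X$ to show that this symmetry is a multiple of the Euler field.

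\textbf{Step 1: the spanning set of $J_{1,0}$.} Since $\deg x_l=(0,1)$, $\deg z_k=(1,-2)$, $\deg\frac{\partial F}{\partial x_i}=(1,-1)$ and $\deg\frac{\partial F}{\partial z_j}=\deg f_j=(0,2)$, the space $J_{1,0}$ is spanned by the elements $x_l\frac{\partial F}{\partial x_i}$ and $z_k f_j$. A linear relation among them is a pair of matrices $A=(a_{li})\in\mathfrak{gl}_{n+1}(\CC)$ and $B=(b_{kj})\in\mathfrak{gl}_c(\CC)$ with
\begin{equation*}
\sum_{l,i} a_{li}\,x_l\frac{\partial F}{\partial x_i}+\sum_{k,j} b_{kj}\,z_k f_j=0 .
\end{equation*}
Write $D:=\sum_{l,i}a_{li}x_l\partial_{x_i}$ for the associated linear vector field. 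Since $\frac{\partial F}{\partial x_i}=\sum_j z_j\frac{\partial f_j}{\partial x_i}$, the coefficient of $z_j$ gives, for every $j$,
\begin{equation*}
D(f_j)=-\sum_m b_{jm}f_m .
\end{equation*}
So the relations correspond bijectively to pairs $(A,B)$ such that $D$ preserves $V:=\operatorname{span}(f_1,\dots,f_c)$, with $B$ then determined by $A$. The $f_m$ are linearly independent because $X$ is a complete intersection of codimension $c$.

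\textbf{Step 2: reduce to vector fields on $X$.} If $D(V)\subseteq V$, then the one-parameter group $\exp(tA)\subset \mathrm{GL}_{n+1}(\CC)$ preserves $V$, hence the ideal $(f_1,\dots,f_c)$, hence $X$. Therefore $D$ induces a holomorphic vector field on $X$, namely the restriction of the field $x\mapsto Ax$ modulo the Euler field. Because $X$ is a CY manifold of dimension $d\geq2$, we have
\begin{equation*}
H^0(X,T_X)\cong H^0(X,\Omega_X^{d-1})=0 ,
\end{equation*}
using $T_X\cong\Omega_X^{d-1}\otimes\omega_X^{-1}$ and $\omega_X\cong\OO_X$. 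Hence the induced field vanishes, which means that for every point $[x]\in X$ the vector $Ax$ is proportional to $x$. In other words, every point of the affine cone $\widehat X$ lies in an eigenspace of $A$.

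\textbf{Step 3: conclude $A$ is scalar.} The cone $\widehat X$ is irreducible (\cref{connected_irred}) and is contained in the finite union of the eigenspaces of $A$, so it lies in a single eigenspace $E$. The ideal of $X$ is generated by the quadrics $f_j$ and contains no linear forms, so $X$ is nondegenerate; thus $E=\CC^{n+1}$ and $A=\lambda\,\Id$. Then $D=\lambda\sum_i x_i\partial_{x_i}$, and Euler's formula gives $D(f_j)=2\lambda f_j$, so $B=-2\lambda\,\Id$. Using $\frac{\partial F}{\partial z_j}=f_j$, the relation is $\lambda$ times
\begin{equation*}
\sum_i x_i\frac{\partial F}{\partial x_i}-2\sum_j z_j\frac{\partial F}{\partial z_j}=0 ,
\end{equation*}
which is the stated relation.

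\textbf{Main obstacle.} The delicate point is Step 2: justifying that a linear field preserving $V$ is tangent to $X$, and that a field on $X$ which is zero in $H^0(X,T_X)$ forces $Ax\parallel x$ pointwise. This rests on the Euler sequence identification of vector fields on $\PP^n$ with $\mathfrak{gl}_{n+1}$ modulo scalars. A fallback for the case $d=1$, where $H^0(T_X)=0$ still holds for an elliptic curve only up to translations, is not needed here, since in the Calabi--Yau setting of the paper $d\geq2$.
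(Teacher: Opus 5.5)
Your argument is correct in the setting where the lemma is actually used (smooth Calabi--Yau complete intersections of quadrics with $d\geq 2$), and it takes a genuinely different route from the paper.

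\textbf{The paper's route.} The paper uses deformation invariance of $\dim R_{1,0}$ (a primitive Hodge number in the CY case) to specialize to diagonal quadrics $f_j=\sum_i\lambda_{ij}x_i^2$ with generic coefficients. It then does explicit linear algebra. Comparing coefficients makes $A$ diagonal, and what remains is the matrix identity $2A\Lambda=-\Lambda B$, so the rows of $\Lambda$ are eigenvectors of $B$. A count finishes it: an eigenspace of dimension $\mu<c$ contains at most $\mu$ rows, while there are $n+1>c$ rows. This forces $A=a\Id$ and $B=-2a\Id$.

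\textbf{Your route.} You identify the space of relations with the Lie algebra of the linear stabilizer of $\operatorname{span}(f_1,\dots,f_c)$, i.e.\ with infinitesimal linear automorphisms of $X$. You then kill it with $H^0(X,T_X)\cong H^0(X,\Omega_X^{d-1})=0$, irreducibility of the cone and linear nondegeneracy.

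\textbf{What each buys.} Your route needs no specialization and no Hodge-theoretic deformation argument, and it works for every member of the family at once. It also makes transparent where a hypothesis is needed. For $c=1$ the relations form the conformal orthogonal algebra of $f_1$, of dimension $\binom{n+1}{2}+1$, so the statement fails there. Correspondingly, the paper's step ``$a_{ik}\lambda_{kj}=0$ whenever $i\neq k$'' is not immediate. The monomial $z_jx_ix_k$ receives contributions from both $a_{ik}$ and $a_{ki}$, which gives only $a_{ik}\lambda_{kj}+a_{ki}\lambda_{ij}=0$ for all $j$. This yields $a_{ik}=a_{ki}=0$ only because any two rows of $\Lambda$ are independent once $c\geq 2$. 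The paper's computation, in turn, is elementary and also covers the CY curve case $d=1$ (the elliptic normal quartic). There $H^0(X,T_X)\neq 0$, and your argument would need the extra remark that a one-parameter group of translations cannot preserve $\OO_X(1)$.

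\textbf{Two small points.} First, the field generated by $D=\sum_{l,i}a_{li}x_l\partial_{x_i}$ is $x\mapsto A^{T}x$ rather than $x\mapsto Ax$; this is harmless. Second, \cref{connected_irred} concerns generalized Fermat manifolds, so for an arbitrary smooth complete intersection you should quote other facts instead. Namely, it is connected for $d\geq 1$ (e.g.\ $h^0(\OO_X)=1$ from the Koszul resolution), hence irreducible. And its homogeneous ideal is $(f_1,\dots,f_c)$, which gives nondegeneracy.
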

\begin{proof} We may assume that \(X\not= \emptyset\), so that \(\dim X =d\geq 0\), or equivalently \(n-c>-1\), which gives \(n+1>c\).
    Since the dimension of \(R_{1,0}\) is invariant under deformation, also the one of \(J_{1,0}\) is, and we can choose \(F\) of a specific form. We let 
    \begin{equation*}
        f_j=\sum_{i=0}^n \lambda_{ij} x_i^2
    \end{equation*}
    and store the coefficients in a \((n+1)\times c\) matrix \(\Lambda=(\lambda_{ij})\). We may assume that any \(c\times c\) minor of \(\Lambda\) is not degenerate, and that any \(\lambda_{ij}\not=0\). In this case \((f_1,\dots,f_c)\) determine a smooth complete intersection, as explained in \cref{GFM}. The vector space \(J_{1,0}\) is generated by elements of the form \(x_i\frac{\partial F}{\partial x_k}\) or \(z_j\frac{\partial F}{\partial z_l}\).
    Assume that 
    \begin{equation*}
        \sum_{i,k=0}^n a_{ik}x_i \frac{\partial F}{\partial x_k}+\sum_{j,l=1}^c b_{jl}z_j \frac{\partial F}{\partial z_l}=0
    \end{equation*}
for \(a_{ik},b_{jl}\in \mathbb{C}\). This gives 
\begin{equation*}
    2\sum_{i,k=0}^n\sum_{j=1}^c a_{ik} \lambda_{kj} z_j x_i x_k+\sum_{j,l=1}^c\sum_{h=0}^n b_{jl} \lambda_{hl}z_jx_h^2=0
\end{equation*}
    which, factoring the \(z_j\)'s, implies 
    \begin{equation*}
         2\sum_{i,k=0}^n a_{ik} \lambda_{kj} x_i x_k= -\sum_{l=1}^c\sum_{h=0}^n b_{jl} \lambda_{hl}x_h^2
    \end{equation*}
    for any \(j=1,\dots,c\).
    This gives immediately \(a_{ik}\lambda_{kj}=0\) whenever \(i\not=k\) and, since \(\lambda_{kj}\not=0\), we get \(a_{ik}=0\). We form the matrices \(A=(a_{ik})\) and \(B=(b_{lj})\) of sizes \((n+1)\times(n+1)\) and \(c\times c\), respectively. We have just observed that \(A\) is diagonal, so we let \(a_i=a_{ii}\). We are left with the equality 
    \begin{equation*}
        2\sum_{i=0}^n a_{i} \lambda_{ij} x_i^2= -\sum_{l=1}^c\sum_{i=0}^n b_{jl} \lambda_{il}x_i^2
    \end{equation*}
that clearly gives 
\begin{equation*}
    2 a_{i} \lambda_{ij}= -\sum_{l=1}^c b_{jl} \lambda_{il}
\end{equation*}
for any \(i=0,\dots, n\), \(j=1,\dots,c\). This can be reinterpreted as the matrix equation \(2A\Lambda= -\Lambda B\). Since \(A\) is diagonal, this means that if \(r_i\) is the \(i\)-th row of \(\Lambda\), then \(r_i B= -2 a_i r_i\) and hence it is an eigenvector of \(B\). Denote by \(\mu_i\) the dimension of the eigenspace \(V_i\) of \(B\) relative to \(-2a_i\). Then \(\sum_{i=1}^k \mu_i=c\), where \(k\) is the number of distinct \(a_i\)'s, and in particular \(\mu_i\leq c\). Let \(m_i\) be the number of row vectors of \(\Lambda\) belonging to \(V_i\). We want to prove \(k=1\), for this assume by contradiction that \(k>1\). This implies \(\mu_i<c\) and that \(m_i\leq \mu_i\). In fact, if \(m_i\geq \mu_i+1\) then we can choose \(\mu_i+1\leq c\) independent row vectors in \(V_i\) of dimension \(\mu_i\), which is not possible. This gives \(n+1=\sum_{i=1}^k m_i\leq \sum_{i=1}^k \mu_i=c\), which is a contradiction since \(c<n+1\). In conclusion, \(k=1\) and \(B\) has only one eigenvalue. Since it has a basis of eigenvectors, we can assume that it is diagonal \(B=bI_{c}\). This gives \(r_i b=-2a_ir_i\) for any \(i\), hence \(A=aI_{n+1}\) and \(b=-2a\).
\end{proof}
We expect that this is the only relation in general.

\subsection{Cohomology of Hilbert schemes of points on surfaces}
We let \(S\) be a projective complex surface, and consider the Hilbert scheme \(S^{[n]}\) of \(n\geq 2\) points on \(S\). This is smooth by a result of Fogarty, of dimension \(2n\), and can be seen as the resolution of singularities of the symmetric product \(S^{(n)}\) via the Hilbert-Chow morphism \(S^{[n]}\to S^{(n)}\). We have the well-known following fact. 
\begin{lemma}\label{lem:Betti_2_Hilbert_scheme}
    The second Betti number is given by \(b_2(S^{[n]})=b_2(S)+1\), for \(n\geq 2\).
\end{lemma}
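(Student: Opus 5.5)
We compute $b_2(S^{[n]})$ by combining Göttsche's formula with the structure of the Hilbert–Chow morphism. The cleanest route is the decomposition theorem of de Cataldo–Migliorini, equivalently Göttsche–Soergel. For $S$ smooth projective it gives
\begin{align*}
H^*(S^{[n]},\mathbb{Q})\cong\bigoplus_{\nu\vdash n} H^{*-2(n-\ell(\nu))}\bigl(S^{(\nu)},\mathbb{Q}\bigr),
\end{align*}
where $\nu$ runs over partitions of $n$, $\ell(\nu)$ is the number of parts, and $S^{(\nu)}=\prod_i S^{(a_i)}$ if $\nu$ has $a_i$ parts equal to $i$. The shift $2(n-\ell(\nu))$ is twice the codimension of the corresponding stratum of $S^{(n)}$. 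We then read off degree $2$.

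First, the summands with $n-\ell(\nu)\geq 2$ contribute nothing in degree $2$. The only partitions with $n-\ell(\nu)\leq 1$ are $\nu=(1^n)$, with shift $0$, and $\nu=(2,1^{n-2})$, with shift $2$; the latter exists exactly because $n\geq 2$. Hence
\begin{align*}
b_2(S^{[n]})=b_2\bigl(S^{(n)}\bigr)+b_0\bigl(S\times S^{(n-2)}\bigr)=b_2\bigl(S^{(n)}\bigr)+1,
\end{align*}
using that $S\times S^{(n-2)}$ is connected (for $n=2$ read $S^{(0)}$ as a point).

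Next we compute $b_2(S^{(n)})$. We have $H^*(S^{(n)},\mathbb{Q})=\bigl(H^*(S,\mathbb{Q})^{\otimes n}\bigr)^{\mathfrak S_n}$, with Koszul signs. In degree $2$ there are two kinds of symmetric tensors. The first is the symmetrization of one class in $H^2(S)$, which contributes $b_2(S)$. The second is the symmetrization of two classes in $H^1(S)$ placed in two different factors. By graded commutativity this yields $\Lambda^2 H^1(S)$, of dimension $\binom{b_1(S)}{2}$. Thus
\begin{align*}
b_2\bigl(S^{(n)}\bigr)=b_2(S)+\binom{b_1(S)}{2}.
\end{align*}
The second term vanishes when $b_1(S)=0$, which holds for the surfaces relevant here (K3 and Enriques surfaces). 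So we interpret the statement as for surfaces with $q(S)=0$; this is presumably the implicit hypothesis, and without it the formula should read $b_2(S)+\binom{b_1}{2}+1$. Combining the two displays gives $b_2(S^{[n]})=b_2(S)+1$.

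The main obstacle is justifying the decomposition with its correct shifts. Rather than rederive it, I would cite Göttsche's generating-function computation of the Betti numbers, or de Cataldo–Migliorini, where the Hilbert–Chow morphism is semismall with strata indexed by partitions.

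A more elementary alternative, as a sanity check, proves only $b_2(S^{[n]})=b_2(S^{(n)})+1$. The exceptional divisor $E$, which lies over the big diagonal, is irreducible and maps onto the stratum $S\times S^{(n-2)}$ with fibre $\mathbb{P}^1$. Since $S^{(n)}$ has quotient, hence $\mathbb{Q}$-factorial, singularities, the standard sequence for a resolution contracting one irreducible divisor gives $H^2(S^{[n]},\mathbb{Q})\cong H^2(S^{(n)},\mathbb{Q})\oplus\mathbb{Q}[E]$. This step works because $b_1=0$ makes all of $H^2$ algebraic and pulled back from the symmetric product.
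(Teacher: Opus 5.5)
Your argument is correct, and it takes a different route from the paper. The paper in fact gives no proof: it quotes the integral splitting $H^2(S^{[n]},\mathbb{Z})/\mathrm{tors}\cong H^2(S,\mathbb{Z})/\mathrm{tors}\oplus\mathbb{Z}\tfrac12[E]$, a Beauville-type statement, and refers to Nakajima and G\"ottsche.

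You instead use the G\"ottsche--Soergel (de Cataldo--Migliorini) decomposition for the semismall Hilbert--Chow morphism. This isolates the single extra class coming from the stratum of type $(2,1^{n-2})$ and gives $b_2(S^{[n]})=b_2(S^{(n)})+1$. You then compute $H^2(S^{(n)},\mathbb{Q})$ as $\mathfrak{S}_n$-invariants with Koszul signs. The shifts, the list of contributing partitions and the count $b_2(S)+\binom{b_1(S)}{2}$ are all right, and the result agrees with G\"ottsche's generating function.

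What your route buys is that it works for every smooth projective surface and exposes the extra summand $\Lambda^2H^1(S)$. The lemma as the paper states it, for an arbitrary projective surface, is false when $q(S)>0$: for an abelian surface $A$ one has $b_2(A^{[n]})=6+6+1=13\neq 7$. The integral splitting the paper quotes likewise holds only when $q(S)=0$. So your added hypothesis is genuinely needed.

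The gap is harmless for the paper. Enriques surfaces have $q=0$, and the Proposition about a general $S$ only uses $b_2(S^{[n]})>1$, which your formula still gives. For its part, the paper's cited statement carries integral information that a Betti-number count does not (the class $[E]$ is divisible by $2$), though nothing downstream uses it.

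One correction to your closing aside. $b_1(S)=0$ does not make $H^2$ algebraic: a K3 surface has $h^{2,0}=1$. Moreover, the splitting $H^2(S^{[n]},\mathbb{Q})\cong H^2(S^{(n)},\mathbb{Q})\oplus\mathbb{Q}[E]$ needs no hypothesis on $b_1$ at all. Your own decomposition-theorem computation already shows this, since the $\Lambda^2H^1(S)$ term sits inside $H^2(S^{(n)})$. Drop that justification; the main proof is unaffected.
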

This follows from the more general fact that \(H^2(S^{[n]},\mathbb{Z})/tors\cong H^2(S,\mathbb{Z})/tors\oplus \mathbb{Z}\frac{1}{2}[E]\), where \(E\) is the exceptional divisor of the above resolution. 
The cohomology ring \(H^*(X,\mathbb{Q})\) has an explicit set of generators in terms of Nakajima operators \cite{nakajima1997heisenberg}, and formulas for Betti numbers and Hodge numbers were provided by G\"ottsche \cite{gottsche2006hilbert}.

\subsection{The Enriques manifolds of CY type}
Let us recall the construction of the two families, and fix some notations.
\begin{example}
    Following \cite[Theorem 3.1]{OS}, we consider an Enriques surface \(E\) and denote by \(X=E^{[n]}\) its Hilbert scheme of $n$ points for some \(n\geq 2\). The universal cover \(Y\to X\) has degree \(2\) and \(Y\) is a CY manifold, hence \(X\) is an Enriques manifold of CY type and index \(2\).
\end{example}
In particular, we observe that since $E$ is projective then \(Y\) is projective too. 
\begin{example}
    Following \cite[6.1.5]{LogEnriques}, we let \(Y'\subset \mathbb{P}^{4n+1}\) be the CY manifold of dimension \(\dim Y'=2n\) obtained as smooth complete intersection of \(2n+1\) general quadrics of the form \(Q(x)+Q'(y)\), where \(x\) denotes the first set of \(2n+1\) coordinates and \(y\) the remaining ones. The involution \(\varphi\in \Aut(Y')\) obtained as multiplication by \(-1\) on \(x\) is fixed-point-free, and it hence gives an Enriques manifold of CY type and index \(2\) as quotient \(Y'\to X'=Y'/\langle\varphi\rangle\). For a particular choice of quadrics, \(X'\) can be obtained as a quotient of a CY type generalized Fermat manifold as in \cref{Enriques_manifold}, see also \cref{subfamily_E}.
\end{example}

\begin{proposition}
    Let \(X=S^{[n]}\) be the Hilbert scheme of \(n\geq 2\) points on a smooth projective surface \(S\), and let \(X'\) be the free quotient of a smooth complete intersection of dimension \(2n\). Then \(X\) and \(X'\) are not deformation equivalent.  
\end{proposition}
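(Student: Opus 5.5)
Since deformation equivalent compact manifolds are diffeomorphic, and in particular have the same Betti numbers, the plan is to show $b_2(X)\neq b_2(X')$. By \cref{lem:Betti_2_Hilbert_scheme} we have $b_2(X)=b_2(S)+1\geq 2$, because $S$ is projective and so $b_2(S)\geq 1$, an ample class being nonzero in $H^2(S,\mathbb{Q})$. It therefore suffices to prove that $b_2(X')=1$.

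Write $X'=Y'/\langle\varphi\rangle$ with $Y'\subset\mathbb{P}^N$ a smooth complete intersection of dimension $2n\geq 4$ and $\pi\colon Y'\to X'$ étale of finite degree. Rationally, $H^2(X',\mathbb{Q})\cong H^2(Y',\mathbb{Q})^{\langle\varphi\rangle}$ by the transfer argument for finite étale (indeed any finite) quotients. Since $\dim Y'=2n\neq 2$, \cref{lem:primitive_cohomology_complete_intersections} gives $H^2(Y',\mathbb{Z})\cong H^2(\mathbb{P}^N,\mathbb{Z})\cong\mathbb{Z}$, generated by the hyperplane class $h=\iota^*c_1(\mathcal{O}(1))$. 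So $b_2(Y')=1$ and the invariant subspace has dimension at most $1$.

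We also need $h$ to be invariant, so that $b_2(X')=1$ rather than $0$. In the case of interest $\varphi$ is the restriction of a linear automorphism of $\mathbb{P}^N$, so $\varphi^*\mathcal{O}(1)|_{Y'}\cong\mathcal{O}(1)|_{Y'}$ and $h$ is fixed. For a general free automorphism, note instead that $\varphi^*$ preserves the ample cone, so $\varphi^*h$ is a positive multiple of $h$ in $H^2(Y',\mathbb{Z})\cong\mathbb{Z}h$, hence equals $h$. Either way $b_2(X')=1<2\leq b_2(X)$. The only delicate point is the dimension hypothesis $2n\geq 4$ needed to apply the Lefschetz lemma in degree $2$, which holds because $n\geq 2$. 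For the specific families this yields $b_2=1$ versus $b_2=b_2(E)+1=11$.
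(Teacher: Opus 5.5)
Your proposal is correct and follows essentially the same route as the paper: you get $b_2(X)=b_2(S)+1\geq 2$ from \cref{lem:Betti_2_Hilbert_scheme}, and $b_2(X')\leq b_2(Y')=1$ from invariant rational cohomology and \cref{lem:primitive_cohomology_complete_intersections}, and then conclude by Ehresmann. Your additional argument that the hyperplane class is $\varphi^*$-invariant, giving $b_2(X')=1$ exactly, is valid but not needed, since $b_2(X')\leq 1<2\leq b_2(X)$ already suffices, and that inequality is all the paper uses.
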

\begin{proof}
    Observe that \(b_2(X)=b_2(S)+1>1\) by \cref{lem:Betti_2_Hilbert_scheme} and the fact that \(b_2(S)>0\) since \(S\) is projective. On the other hand, if \(X'=Y'/\langle\varphi\rangle \) for \(\varphi\in \Aut(Y')\), then \(H^*(X',\mathbb{Q})\cong H^*(Y',\mathbb{Q})^{\varphi^*}\) where the right hand side denotes the space of \(\varphi^*\)-invariant classes. This implies that \(b_2(X')\leq b_2(Y')=1\) by \cref{lem:primitive_cohomology_complete_intersections} (notice that equality is in fact attained whenever \(\varphi\) is restriction of a projective transformation). This immediately gives \(b_2(X)\not =b_2(X')\), implying that \(X\) and \(X'\) cannot be deformation equivalent by Ehresmann's Lemma. 
\end{proof}

From the previous general result we deduce that the given examples of Enriques manifolds of CY type are not deformation equivalent.

\begin{corollary}\label{cor:not_deformation}
    The Hilbert scheme \(X=E^{[n]}\) of \(n\geq 2\) points on an Enriques surface \(E\) and the free quotient \(X'\) of a smooth complete intersection of dimension \(2n\) of quadrics by an involution (as in \cite[6.1.5]{LogEnriques}) are not deformation equivalent.  
\end{corollary}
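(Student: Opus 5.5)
The plan is to deduce this directly from the preceding Proposition, specialized to $S=E$ an Enriques surface and to $X'$ the quotient of the complete intersection of quadrics from \cite[6.1.5]{LogEnriques}. There are two checks to make. First, $X=E^{[n]}$ is a Hilbert scheme of $n\geq 2$ points on a smooth projective surface, since Enriques surfaces are projective as recalled in the introduction. Second, $X'=Y'/\langle\varphi\rangle$ is the free quotient of the smooth complete intersection $Y'\subset\PP^{4n+1}$ of $2n+1$ quadrics, which has dimension $2n$. With both hypotheses in place, the Proposition gives the conclusion.

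To make the statement about Betti numbers explicit, as announced in the introduction, I will compute them directly. For $X$, \cref{lem:Betti_2_Hilbert_scheme} gives $b_2(E^{[n]})=b_2(E)+1=10+1=11$, using $b_2(E)=10$ for an Enriques surface.

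For $X'$, I will use $H^2(X',\mathbb{Q})\cong H^2(Y',\mathbb{Q})^{\varphi^*}$. Since $\dim Y'=2n\geq 4$, the degree $2$ is different from $\dim Y'$. Hence \cref{lem:primitive_cohomology_complete_intersections} gives $H^2(Y',\mathbb{Z})\cong H^2(\PP^{4n+1},\mathbb{Z})$, generated by the hyperplane class $h$. The involution $\varphi$ is the restriction of a projective linear map, so it preserves $h$. Therefore $b_2(X')=1$.

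Since $11\neq 1$, and Betti numbers are constant in smooth proper families by Ehresmann's lemma, $X$ and $X'$ are not deformation equivalent. There is no real obstacle: the only points to verify are that the dimension hypothesis $2\neq 2n$ holds, which is guaranteed by $n\geq 2$, and that $\varphi^*$ fixes $h$.
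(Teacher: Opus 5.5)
Your proposal is correct and follows the paper's route: the corollary is obtained by specializing the preceding Proposition, whose proof compares $b_2(E^{[n]})=b_2(E)+1$ with $b_2(X')\le b_2(Y')=1$ via \cref{lem:primitive_cohomology_complete_intersections} and invariant cohomology, and then concludes by Ehresmann's lemma. Your exact values $11$ and $1$, where you use that $\varphi$ is induced by a projective linear map and so fixes the hyperplane class $h$, are precisely the refinement the paper records parenthetically in that proof.
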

The Enriques manifolds of CY type \(X=E^{[n]}\), associated with Enriques surfaces $E$, are of particular interest for the following reason:
\begin{proposition}
The universal cover \(Y\to X=E^{[n]}\) of the Hilbert scheme of \(n\geq 2\) points on an Enriques surface \(E\), which is a projective CY manifold by \cite[Theorem 3.1]{OS}, is not deformation equivalent to a complete intersection.
\end{proposition}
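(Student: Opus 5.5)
The plan is to compare the second Betti number of \(Y\) with that of any smooth complete intersection of the same dimension. Deformation equivalent compact manifolds are diffeomorphic by Ehresmann's Lemma, so their Betti numbers agree. By \cref{lem:primitive_cohomology_complete_intersections}, a smooth complete intersection \(Z\subset\mathbb{P}^N\) of dimension \(2n\geq 4\) has \(H^2(Z,\mathbb{Z})\cong H^2(\mathbb{P}^N,\mathbb{Z})\), because \(2\neq 2n\). Hence \(b_2(Z)=1\), and it suffices to prove \(b_2(Y)\geq 2\).

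To bound \(b_2(Y)\) from below, I would use the étale double cover \(\pi\colon Y\to X=E^{[n]}\). Pullback identifies \(H^*(X,\mathbb{Q})\) with the invariant part \(H^*(Y,\mathbb{Q})^{\iota^*}\), where \(\iota\) is the covering involution. In particular \(\pi^*\colon H^2(X,\mathbb{Q})\to H^2(Y,\mathbb{Q})\) is injective, so
\begin{align*}
b_2(Y)\geq b_2(X).
\end{align*}
By \cref{lem:Betti_2_Hilbert_scheme} we have \(b_2(X)=b_2(E)+1\). Since \(b_2(E)=10\) for an Enriques surface, this gives \(b_2(X)=11\) and therefore \(b_2(Y)\geq 11>1\).

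The case \(n\geq2\) gives \(\dim Y=2n\geq 4\), so the middle degree is never \(2\) and \cref{lem:primitive_cohomology_complete_intersections} applies without exception. One also needs that the complete intersection be smooth, which holds because a deformation of the smooth manifold \(Y\) is smooth. If "complete intersection" is meant in a weighted projective space, or in a product of projective spaces, one would need a Lefschetz-type statement there as well. I expect this to be the only delicate point, and I would restrict to complete intersections in \(\mathbb{P}^N\), as the paper does throughout. The rest of the argument is just the chain of inequalities \(b_2(Y)\geq 11\neq 1\), which yields that \(Y\) is not deformation equivalent to a complete intersection.
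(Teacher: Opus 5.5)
Your proposal is correct and follows the paper's argument. The paper proves \(b_2(Y)\geq b_2(E^{[n]})=11\) using the invariant cohomology of the étale double cover together with \(b_2(E)=10\), and compares this with \(b_2=1\) for a smooth complete intersection via the Lefschetz lemma. Your use of \(\mathbb{Q}\)-coefficients and your explicit check that \(2\neq 2n\) are slightly more careful than the paper's version.
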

\begin{proof}
    Since the morphism \(Y\to X\) is \'etale, of degree \(2\) by \cref{Enriques_manifold_index2}, there is an involution \(\varphi \in \Aut(Y)\) such that \(X=Y/\langle\varphi\rangle\). We have \(H^*(X,\mathbb{Z})\cong H^*(Y,\mathbb{Z})^{\varphi^*}\), which implies \(b_2(Y)\geq b_2(X)=11\) by \cref{lem:Betti_2_Hilbert_scheme} and the fact that \(b_2(E)=10\) for an Enriques surface. For a complete intersection \(Y'\) we have \(b_2(Y')=1\) by \cref{lem:primitive_cohomology_complete_intersections}, and since \(b_2(Y')\not=b_2(Y)\) we deduce that \(Y'\) and \(Y\) cannot be deformation equivalent.
\end{proof}

\begin{remark}
The universal cover of \(E^{[2]}\) is explicitly described by Hayashi in \cite[Proposition 2.6]{hayashi2015universal} as a blow-up of a quotient of \(S^{2}\), where \(S\to E\) is the K3 universal cover of the Enriques surface. Its Hodge numbers are computed in \cite[Theorem 6.1]{hayashi2015universal}. For \(n>2\), only a birational model of the universal cover of \(E^{[n]}\) is provided by Hayashi.   
\end{remark}

In particular, we have that the CY coverings of the examples of Enriques manifolds given in \cite[Theorem 3.1]{OS} and \cite[6.1.5]{LogEnriques} are also not deformation equivalent. In fact, the universal cover of the first example is not deformation equivalent to a complete intersection by the previous proposition, but the universal cover of the second one is a complete intersection by construction.

\subsection{Dimension counts}
We now want to compute the dimensions of the deformation spaces of the previous examples.
Let's start by noticing that deformations of compact K\"ahler manifolds with torsion canonical bundle are unobstructed by \cite[Corollary 2]{Ran}. This applies to our cases, hence to understand the dimension of the deformation spaces of the examples of Enriques manifolds that we considered, we only need to compute the dimension of the first cohomology space associated to their tangent bundles.

As before, we let \(E\) be an Enriques surface, \(X=E^{[n]}\) and \(Y\to X\) its universal cover. 
We recall the following: the Kuranishi family of \(Y\) has base of dimension \(10\) by combination of \cite[Theorem 1.1]{hayashi2015universal} and \cite[Theorem 0.1 and Theorem 0.3]{fantechi1995deformation}.

We then turn our attention to the other family \(Y'\to X'\) where \(Y'\subset \mathbb{P}^{4n+1}\) is the smooth complete intersection of $(2n+1)$ quadrics admitting a free involution \(\varphi\) and \(X'=Y'/\langle\varphi\rangle\).

\begin{lemma}\label{lem:h1_tangent}
    We have that \(h^1(X',T_{X'})=h^1(Y',T_{Y'})-h^1(X',\Omega_{X'}^{2n-1})\).
\end{lemma}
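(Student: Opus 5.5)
We want \(h^1(X',T_{X'})=h^1(Y',T_{Y'})-h^1(X',\Omega_{X'}^{2n-1})\). The plan is to compare the tangent bundle of \(X'\) with \(\varphi\)-invariants on \(Y'\), and use the volume form to turn tangent vectors into \((2n-1)\)-forms.

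\textbf{Step 1 (invariants).} Since \(\pi\colon Y'\to X'\) is étale of degree \(2\) with Galois group \(\langle\varphi\rangle\), we have \(\pi^*T_{X'}\cong T_{Y'}\). Moreover \(\pi_*T_{Y'}=T_{X'}\oplus (T_{X'}\otimes L)\), where \(L\) is the \(2\)-torsion line bundle defining the cover. By the Lemma on the holomorphic Lefschetz formula for CY manifolds (applied to \(\dim Y'=2n\) even), \(L\cong\omega_{X'}\). Hence
\[
H^1(Y',T_{Y'})=H^1(X',T_{X'})\oplus H^1(X',T_{X'}\otimes\omega_{X'}),
\]
and the first summand is the \(\varphi^*\)-invariant part. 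Consequently \(h^1(X',T_{X'})=h^1(Y',T_{Y'})-h^1(X',T_{X'}\otimes\omega_{X'})\).

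\textbf{Step 2 (contraction with the volume form).} Contraction gives a natural isomorphism \(T_{X'}\otimes\omega_{X'}\cong\Omega^{2n-1}_{X'}\), via the perfect pairing \(\Omega^1\otimes\Omega^{2n-1}\to\omega\). Substituting into Step 1 gives \(h^1(X',T_{X'}\otimes\omega_{X'})=h^1(X',\Omega_{X'}^{2n-1})\), which is the formula.

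Equivalently, one can argue entirely on \(Y'\). There \(T_{Y'}\cong\Omega^{2n-1}_{Y'}\) via \(v_{Y'}\). Since \(\varphi^*v_{Y'}=-v_{Y'}\), this isomorphism is \(\varphi\)-anti-equivariant. So the invariant part of \(H^1(T_{Y'})\) corresponds to the anti-invariant part of \(H^1(\Omega^{2n-1}_{Y'})\). The invariant part of \(H^1(\Omega^{2n-1}_{Y'})\) is \(H^1(X',\Omega^{2n-1}_{X'})\), and subtracting it from the total dimension gives the claim.

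\textbf{Main obstacle.} The only delicate point is the identification \(L\cong\omega_{X'}\), i.e. that the anti-invariant eigensheaf is exactly the canonical bundle. This follows from \(\omega_{X'}\) being nontrivial \(2\)-torsion with \(\pi^*\omega_{X'}\cong\omega_{Y'}\) trivial. Indeed, \(\pi_*\OO_{Y'}=\OO_{X'}\oplus L\), and the generator \(v_{Y'}\) of \(\omega_{Y'}\) is anti-invariant, so it descends to a nowhere-vanishing section of \(\omega_{X'}\otimes L\). This gives \(L\cong\omega_{X'}^{-1}\cong\omega_{X'}\), as required.
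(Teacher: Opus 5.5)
Your proof is correct and is essentially the paper's argument. Both split the pushforward under the étale double cover into an invariant summand and a summand twisted by $\omega_{X'}$, and then apply $T_{X'}\otimes\omega_{X'}\cong\Omega^{2n-1}_{X'}$. The only difference is cosmetic: you push forward $T_{Y'}$, so you never need $T_{Y'}\cong\Omega^{2n-1}_{Y'}$, while the paper pushes forward $\Omega^{2n-1}_{Y'}$ and identifies its anti-invariant summand with $T_{X'}$. Your argument that the anti-invariant eigensheaf is $\omega_{X'}$, via the anti-invariant volume form, fills in a step the paper takes for granted when it writes $Y'\cong \Spec(\OO_{X'}\oplus\omega_{X'})$.
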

\begin{proof}
    We follow the proof of \cite[Theorem 3.2]{hayashi2015universal}. Recall that since \(\omega_{Y'}\cong \mathcal{O}_{Y'}\) then \(T_{Y'}\cong \Omega_{Y'}^{2n-1}\). Moreover, \(\omega_{X'}\) is not trivial but \(\omega_{X'}^{\otimes 2}\cong \mathcal{O}_{X'}\) so that \(T_{X'}\cong \Omega_{X'}^{2n-1}\otimes \omega_{X'}\). Denote by \(\pi \colon Y'\to X'\) the covering map, 
    then 
    \begin{equation*}
        H^1(Y',\Omega_{Y'}^{2n-1})\cong H^1(X',\pi_*\Omega_{Y'}^{2n-1}).
    \end{equation*}
From \(Y'\cong \Spec(\mathcal{O}_{X'}\oplus \omega_{X'})\) we deduce
\begin{equation*}
    \pi_*\Omega_{Y'}^{2n-1}\cong \Omega_{X'}^{2n-1}\oplus( \Omega_{X'}^{2n-1}\otimes\omega_{X'}), 
\end{equation*}
which combined with the previous isomorphisms gives
\begin{equation*}
     H^1(Y',\Omega_{Y'}^{2n-1})\cong H^1(X',\Omega_{X'}^{2n-1})\oplus H^1(X',T_{X'})
\end{equation*}
from which we deduce the statement.
\end{proof}

\begin{proposition}
   Assume \(n\geq 2\). We have \(h^{1,2n-1}(X')=(2n-1)(2n+1)^2\) and \(h^{1,2n-1}(Y')=2h^{1,2n-1}(X')+1\). In particular, \(h^1(X',T_{X'})=(2n-1)(2n+1)^2+1\).
\end{proposition}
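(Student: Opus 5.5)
The plan is to compute everything from the Jacobian ring description of \cref{Jabocian_ring}, applied to $Y'\subset\PP^{4n+1}$. Here $Y'$ is cut out by $c=2n+1$ quadrics, so $\omega_{Y'}\cong\OO_{Y'}$ and $m=0$. Then $R_{1,0}\cong H^{2n-1,1}_{prim}(Y')$. Since $2n\geq 4$, the classes coming from $\PP^{4n+1}$ only contribute to $H^{n,n}$ (\cref{lem:primitive_cohomology_complete_intersections}). Hence $H^{2n-1,1}_{prim}(Y')=H^{2n-1,1}(Y')$, and by Hodge symmetry $h^{1,2n-1}(Y')=\dim R_{1,0}$.

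\emph{Step 1: the count for $Y'$.} Write the coordinates as $x_0,\dots,x_{2n}$ and $y_0,\dots,y_{2n}$, together with $z_1,\dots,z_{2n+1}$. The space $S_{1,0}$ is spanned by the products $z_j\cdot(\text{quadric in } 4n+2 \text{ variables})$, so
\[
\dim S_{1,0}=(2n+1)\binom{4n+3}{2}=(2n+1)^2(4n+3).
\]
The space $J_{1,0}$ is spanned by the $(4n+2)^2$ elements $u_i\,\partial F/\partial u_k$, where $u$ runs over all $x,y$, and by the $(2n+1)^2$ elements $z_j f_l$. By \cref{lem:Euler_relation} the only linear relation among them is the Euler relation, so $\dim J_{1,0}=5(2n+1)^2-1$. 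This gives
\[
h^{1,2n-1}(Y')=(2n+1)^2(4n+3)-5(2n+1)^2+1=2(2n-1)(2n+1)^2+1.
\]
Dimensions are deformation invariant, so I will take the diagonal form of the quadrics, as in the proof of \cref{lem:Euler_relation}; this is compatible with the shape $Q(x)+Q'(y)$.

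\emph{Step 2: the count for $X'$.} I use $H^{1,2n-1}(X')\cong H^{1,2n-1}(Y')^{\varphi^*}$. The involution $\varphi$ acts by $x\mapsto -x$ and fixes $y$ and $z$, so $F$ is invariant. The form $\Omega_x\wedge\Omega_z$ changes sign, because $\Omega_x$ picks up the factor $(-1)^{2n+1}=-1$. Hence the class $\operatorname{Res}(P\,\Omega_x\wedge\Omega_z/F^{c+1})$ is $\varphi$-invariant exactly when $P$ is odd in $x$. As a consistency check, for $p=0$ and $P=1$ this says $\varphi^*v_{Y'}=-v_{Y'}$, in agreement with \cref{Lefschetz_number}.

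So I need the dimension of the odd part of $R_{1,0}$. The odd part of $S_{1,0}$ is spanned by the monomials $z_jx_ay_b$, which gives $(2n+1)^3$ elements. The odd part of $J_{1,0}$ is spanned by the $2(2n+1)^2$ elements $x_a\,\partial F/\partial y_b$ and $y_b\,\partial F/\partial x_a$. The elements $z_jf_l$ and the Euler relation are even. By the argument of \cref{lem:Euler_relation} (the off-diagonal coefficients $a_{ik}$ vanish since all $\lambda_{kj}\neq 0$), these odd generators are linearly independent. Therefore
\[
h^{1,2n-1}(X')=(2n+1)^3-2(2n+1)^2=(2n-1)(2n+1)^2,
\]
and consequently $h^{1,2n-1}(Y')=2h^{1,2n-1}(X')+1$.

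\emph{Step 3: conclusion.} Since $T_{Y'}\cong\Omega^{2n-1}_{Y'}$, we have $h^1(Y',T_{Y'})=h^{2n-1,1}(Y')$. Also $h^1(X',\Omega^{2n-1}_{X'})=h^{2n-1,1}(X')=h^{1,2n-1}(X')$ by Hodge symmetry. \cref{lem:h1_tangent} then gives
\[
h^1(X',T_{X'})=(2n-1)(2n+1)^2+1.
\]
The main obstacle I expect is the sign bookkeeping in Step 2. The residue isomorphism is equivariant only up to the character by which $\varphi$ acts on $\Omega_x\wedge\Omega_z$, and the anti-invariant part must be matched correctly with $H^{*}(X')$.
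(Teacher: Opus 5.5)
Your proof is correct and follows the paper's argument. You use the same Jacobian-ring count of $R_{1,0}$ via \cref{lem:Euler_relation}, the same observation that $\varphi^*(\Omega_x\wedge\Omega_z)=-\Omega_x\wedge\Omega_z$ (so invariant classes correspond to odd elements), and the same final appeal to \cref{lem:h1_tangent}. The only cosmetic difference is that you count the odd part of $R_{1,0}$ directly, as $(2n+1)^3$ odd monomials minus $2(2n+1)^2$ independent odd generators of $J_{1,0}$, whereas the paper counts the even part, $(2n-1)(2n+1)^2+1$, and subtracts it from $\dim R_{1,0}$.
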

\begin{proof}
We make use of the Jacobian ring, hence we refer to results and notation of \cref{Jabocian_ring}, noticing that the ambient space in this case has dimension \(4n+1\) instead of \(n\). In this case we have that \(Y'\subset \mathbb{P}^{4n+1}\) is complete intersection of \(2n+1\) quadrics, hence \(d=4n+1\) and \(c=2n+1\).
Moreover, \(Y'\) is CY so that \(m=0\) and in this case we have an isomorphism 
\begin{equation*}
    R_{p,0}\cong H_{prim}^{n-p,p}(Y').
\end{equation*}
We write a basis of \(R_{1,0}\) to compute the dimension of \(H^{1,2n-1}(Y')=H_{prim}^{1,2n-1}(Y')\), where the equality holds since \(2n-1\not=1\) for \(n\geq2\), and then determine the invariant part for the action of \(\varphi\) to get \(h^{1,2n-1}(X')\).

A basis of \(S_{1,0}\) is given by monomials of the form \(x_ix_jz_k\), and it has cardinality \((2n+1)\binom{4n+3}{2}\). Recall that \(\deg (\frac{\partial F}{\partial x_i})=(1,-1)\) and \(\deg(\frac{\partial F}{\partial z_k})=(0,2)\), so that a basis of the graded part \(J_{1,0}\) is given by elements of the form \(x_j\frac{\partial F}{\partial x_i}\), or \(z_j\frac{\partial F}{\partial z_i}\). There are \((4n+2)^2+(2n+1)^2\) such elements, but by \cref{lem:Euler_relation} there is precisely one linear dependence relation between them. Hence, we have 
\begin{equation*}
    h^{1,2n-1}(Y')=(2n+1)\binom{4n+3}{2}-(4n+2)^2-(2n+1)^2+1=4n(4n^2+2n-1)-1.
\end{equation*}
We now determine the monomials in \(S_{1,0}\) that are invariant under the action of \(\varphi\), observing that \(\varphi^*(\Omega_{x}\wedge\Omega_z)=-\Omega_{x}\wedge\Omega_z\) since \(\varphi\) acts as \(-1\) on \(2n+1\) variables, and \(\varphi^*(F)=F\). Hence, invariant cohomology classes correspond to anti-invariant monomials. We determine the invariant monomials modulo \(J\) and then deduce the number of anti-invariant monomials modulo \(J\).

Monomials that are not invariant are of the form \(x_ix_jz_k\) with \(i\in \{0,\dots,2n\}\) and \(j\in\{2n+1,\dots,4n+2\}\), hence the invariant monomials are \((2n+1)\binom{4n+3}{2}-(2n+1)(2n+1)^2=(2n+2)(2n+1)^2\), where \((2n+1)^2\) corresponds to the anti-invariant monomials \(x_ix_j\). The invariant elements in \(J_{1,0}\) are of the form \(x_i\frac{\partial F}{\partial x_j}\), or \(z_k\frac{\partial F}{\partial z_h}\) with \(i,j\in \{0,\dots,2n\}\), or \(i,j\in\{2n+1,\dots,4n+2\}\), and \(k,h\in \{1,\dots,2n+1\}\). We have a total of \(2(2n+1)^2+(2n+1)^2=3(2n+1)^2\) elements, with a relation of dependence as before (which is invariant). Hence we have 
\begin{equation*}
   (2n+2)(2n+1)^2-3(2n+1)^2+1=(2n-1)(2n+1)^2+1=2n(4n^2+2n-1)
\end{equation*}
invariant monomials in \(R_{1,0}\), and
\begin{equation*}
    h^{1,2n-1}(X')=4n(4n^2+2n-1)-1-2n(4n^2+2n-1)
\end{equation*}
which can be rewritten as 
\begin{equation*}
    h^{1,2n-1}(X')=2n(4n^2+2n-1)-1=(2n-1)(2n+1)^2.
\end{equation*}
From this the relation \(h^{1,2n-1}(Y')=2h^{1,2n-1}(X')+1\) is clear. By \cref{lem:h1_tangent}, \(h^1(X',T_{X'})=h^{1,2n-1}(Y')-h^{1,2n-1}(X')\) which then equals to \(h^{1,2n-1}(X')+1=2n(4n^2+2n-1)=(2n-1)(2n+1)^2+1\).
\end{proof}
\begin{remark}
For \(n=1\), \(Y'\) is a K3 surface and \(X'\) is an Enriques surface.
    The above computation for gives \(h^{1,1}(Y')-1=h^{1,1}_{prim}(Y')=19\) and \(h^{1,1}(X')-1=h^{1,1}_{prim}(X')=9\). 
\end{remark}

With the notation introduced above, we collect our results in the following theorem. 
\begin{theorem}\label{thm:dimension_deformation_space}
The deformation spaces of $E^{[n]}$ and of $X'$ have dimension respectively $10$ and $(2n-1)(2n+1)^2+1$ (recall $\dim X'=\dim E^{[n]}=2n$).
\end{theorem}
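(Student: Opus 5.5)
The statement has two independent parts, and the plan is to treat them separately. In both cases the deformations are unobstructed by \cite[Corollary 2]{Ran}, since both manifolds are compact K\"ahler with torsion canonical bundle. Hence the Kuranishi space is smooth of dimension $h^1(T)$, and it suffices to compute $h^1(E^{[n]},T_{E^{[n]}})$ and $h^1(X',T_{X'})$.

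For $X'$ nothing new is needed. By \cref{lem:h1_tangent} we have $h^1(X',T_{X'})=h^{1,2n-1}(Y')-h^{1,2n-1}(X')$. The preceding proposition computes this via the Jacobian ring, giving $(2n-1)(2n+1)^2+1$. We therefore simply quote it.

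For $X=E^{[n]}$ with universal cover $Y\to X$, I would follow the same strategy as in \cref{lem:h1_tangent}. Write $Y=\Spec(\mathcal{O}_X\oplus\omega_X)$ and $T_Y\cong\Omega^{2n-1}_Y$. Then
\begin{equation*}
H^1(Y,T_Y)\cong H^1(X,T_X)\oplus H^1(X,T_X\otimes\omega_X),
\end{equation*}
where $H^1(X,T_X)=H^1(Y,T_Y)^{\varphi}$ is the invariant part. The Kuranishi family of $Y$ has a $10$-dimensional base by \cite{hayashi2015universal} and \cite{fantechi1995deformation}, so $h^1(Y,T_Y)=10$. It remains to show that the anti-invariant part $H^1(X,T_X\otimes\omega_X)$ vanishes, i.e.\ that every deformation of $Y$ preserves the involution.

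I would obtain this from the geometry of the families. Every deformation of $E$ induces a deformation of $E^{[n]}$, and hence of its universal cover $Y$ with its involution. Enriques surfaces have a $10$-dimensional smooth moduli space, and $h^1(E,T_E)=10$ because $h^0(T_E)=h^2(T_E)=0$. The map $H^1(E,T_E)\to H^1(X,T_X)$ is injective, by the usual Hilbert scheme argument via $H^1(S^{[n]},T)\supset H^1(S,T_S)$. This gives $h^1(X,T_X)\ge 10$. Combined with $h^1(X,T_X)\le h^1(Y,T_Y)=10$, we get equality.

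The main obstacle is justifying this injectivity rigorously, equivalently that the natural deformations of $E^{[n]}$ are effective. A more direct route would be a Künneth/Göttsche-type computation of $H^1(E^{[n]},T)$, in the style of the Beauville/Fantechi analysis for $S^{[n]}$. Failing that, the sandwich argument above via \cite{fantechi1995deformation} suffices.
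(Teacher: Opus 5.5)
Your proposal is correct. For $X'$ it is the paper's argument: \cref{lem:h1_tangent}, the Jacobian-ring count, and unobstructedness from Ran. For $E^{[n]}$ the paper argues purely by citation. It records, via \cite{hayashi2015universal} and \cite{fantechi1995deformation}, that the Kuranishi base of the cover $Y$ is $10$-dimensional, and then asserts the value for $E^{[n]}$ without spelling out the passage between the two. You make that passage explicit with a sandwich: the eigenspace splitting $H^1(Y,T_Y)\cong H^1(X,T_X)\oplus H^1(X,T_X\otimes\omega_X)$ gives $h^1(X,T_X)\le 10$, and $H^1(E,T_E)\hookrightarrow H^1(X,T_X)$ gives $h^1(X,T_X)\ge 10$. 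The injectivity you flag as the main obstacle is not really one, since it is part of Fantechi's analysis of the Hilbert--Chow morphism. For an Enriques surface (regular, with $h^0(T_E)=h^0(-K_E)=0$) her theorem should in fact identify $H^1(E^{[n]},T_{E^{[n]}})$ with $H^1(E,T_E)$ directly, which would make your upper bound redundant. Conversely, you can drop the sandwich altogether. Since $T_X\otimes\omega_X\cong\Omega^{2n-1}_X$, your splitting is exactly \cref{lem:h1_tangent} applied to $E^{[n]}$, i.e.\ $h^1(X,T_X)=h^1(Y,T_Y)-h^{2n-1,1}(E^{[n]})$. Moreover $h^{2n-1,1}(E^{[n]})=0$ for $n\ge 2$: $E$ has only $(p,p)$-classes, and by G\"ottsche's formula so does $E^{[n]}$. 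Hence $h^1(X,T_X)=h^1(Y,T_Y)$ unconditionally, so the anti-invariant part vanishes for a Hodge-theoretic reason rather than as a by-product of the count. The value $10$ then follows from either reference alone. What your route adds to the paper is this explicit bridge between the statement about $Y$ that the paper records and the statement about $E^{[n]}$ that the theorem asserts.
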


\begin{remark}
Observe that for $n=1$ we obtain the dimension of the deformation space of Enriques surfaces which is $10$ in both cases. 
Interestingly the dimension of the moduli space is independent on $n$ in the first case and dependent in the second case. 
\end{remark}

\begin{remark}
 We finish discussing the Morrison--Kawamata cone conjecture as formulated in \cite{PS} and in \cite{Gachet}. Observe that the Picard number of the Calabi--Yau manifold complete intersection and of its quotient, the Enriques manifold, is equal to one so that the Morrison--Kawamata cone conjecture trivially holds for both, the Calabi--Yau manifold and its Enriques quotient. In the case of the Enriques manifold $E^{[n]}$ and  of its universal Calabi--Yau cover the Picard number is bigger and the proof of the conjecture is open to our knowledge.
\end{remark}

\bibliographystyle{alpha}
\bibliography{bibliography}
\end{document}